\let\footnote=\endnote
\newcommand{\exclude}[1]{}
\algnewcommand{\Or}{\textbf{or}}
\algnewcommand{\And}{\textbf{and}}
\pgfplotsset{compat=1.18}
\def\Re{\mathbb{R}}
\def\hat{\widehat}
\def \Ze{{\mathbb{Z}}}
\def\Re{{\mathbb R}}
\def\Q{Q}
\newcommand{\y}{\bm{y}}
\def\hatx{\hat{\bm x}}
\def\pixhat{{\Pi_{\hat{\bm x}}^s}}
\def\dxhat{{\mathcal{D}_{\hat{\bm x}}^s}}
\def\barx{\widebar{X}}
\def\ix{I_{\hat{\bm x}}}
\DeclareMathOperator{\cone}{cone}
\DeclareMathOperator{\reccone}{rc}
\DeclareMathOperator{\linspace}{lin}
\DeclareMathOperator{\diag}{diag}
\DeclareMathOperator{\conv}{conv}
\DeclareMathOperator{\epi}{epi}
\DeclareMathOperator{\ext}{ext}
\DeclareMathOperator{\co}{co}
\renewcommand*{\qed}{\hfill\ensuremath{\square}}
\newcommand*{\qedA}{\hfill\ensuremath{\diamond}}
\begin{document}

\RUNAUTHOR{Haoyun Deng and Weijun Xie}

\RUNTITLE{On the ReLU Lagrangian Cuts for Stochastic Mixed Integer Programming}

 \TITLE{On the ReLU Lagrangian Cuts for Stochastic Mixed Integer Programming}

\ARTICLEAUTHORS{%
\AUTHOR{Haoyun Deng}
\AFF{H. Milton Stewart School of Industrial and Systems Engineering, Georgia Institute of Technology, Atlanta, GA, USA, \EMAIL{hdeng81@gatech.edu}} %
\AUTHOR{Weijun Xie}
\AFF{H. Milton Stewart School of Industrial and Systems Engineering, Georgia Institute of Technology,  Atlanta, GA, USA, \EMAIL{wxie@gatech.edu}}
} %

\ABSTRACT{We study stochastic mixed integer programs with both first-stage and recourse decisions involving mixed integer variables. A new family of Lagrangian cuts, termed ``ReLU Lagrangian cuts," is introduced by reformulating the nonanticipativity constraints using ReLU functions. These cuts can be integrated into scenario decomposition methods. We show that including ReLU Lagrangian cuts is sufficient to achieve optimality in the original stochastic mixed integer programs. Without solving the Lagrangian dual problems, we derive closed-form expressions for these cuts. Furthermore, to speed up the cut-generating procedures, we introduce linear programming-based methods to enhance the cut coefficients. Numerical studies demonstrate the effectiveness of the proposed cuts compared to existing cut families.
}%

\KEYWORDS{Stochastic Mixed Integer Programming; ReLU Lagrangian Cuts; Cutting Plane; L-shaped Cuts.}

\maketitle

 \section{Introduction}
Consider a two-stage Stochastic Mixed Integer Program (SMIP) with finite support of the form:
\begin{subequations}
\begin{equation}\label{master_problem}
\min_{\bm x\in\Ze^{n_1}\times\Re^{n_2}}\left\{\bm c^\top \bm x + \sum_{s\in [N]} p_s\Q_s(\bm x):\bm A\bm x\geq \bm b\right\},
\end{equation}
where the local recourse function is defined as
\begin{equation}\label{loc_rec}
\Q_s(\bm x) = \min_{\bm y\in\Ze^{m_1}\times\Re^{m_2}}\left\{(\bm q^s)^\top \y: \bm W^s\bm y \geq \bm h^s-\bm T^s\bm x\right\}.
\end{equation}\end{subequations}
Here, we let $\bm x$ and $\bm y$ represent the decisions in the first and second stages, respectively. In the objective function of the first-stage problem \eqref{master_problem}, $\bm c^\top \bm x$ denotes the first-stage cost, and $\mathcal{Q}(\bm x):= \sum_{s\in[N]}p_s\Q_s(\bm x)$ is the expected recourse function that takes the expectation of the second-stage cost (i.e., local recourse function) over finite support. Given a first-stage decision, the local recourse function in scenario $s$, denoted by $\Q_s(\bm x)$, is determined by a mixed integer program \eqref{loc_rec}. In this second-stage problem, $\bm q^s,\bm W^s, \bm T^s, \bm h^s$ are the realizations of random parameters in scenario $s$, which are assumed to be rational \citep{louveaux2003stochastic}. For notational convenience, we let $n=n_1+n_2$ and $m=m_1+m_2$.  
In addition, we make the following assumptions throughout the paper.
\begin{assumption}\label{assumption:rel_comp_rec}
The SMIP \eqref{master_problem} has a relatively complete recourse, that is,
for every feasible first-stage decision 
$\bm x$ in \eqref{master_problem}, the second-stage problem \eqref{loc_rec} is feasible.
\end{assumption}

\begin{assumption}\label{assumption:lb}
There exists a universal lower bound $L$ for local recourse functions.
\end{assumption}

\begin{assumption}\label{assumption:xbounded}
The feasible region of the first-stage problem is nonempty and bounded.
\end{assumption}
Assumptions \ref{assumption:rel_comp_rec} - \ref{assumption:xbounded} are standard in an SMIP setting \citep{louveaux2003stochastic}, which together imply that the SMIP \eqref{master_problem} is always feasible, and its optimal value is bounded from below.
Assumption \ref{assumption:rel_comp_rec} ensures that the conditions of the fundamental theorem of integer programming \citep{schrijver1998theory} hold. Assumption \ref{assumption:lb} is required for the integer L-shaped cuts \citep{laporte1993integer}, which can be used as initial ReLU Lagrangian cuts.
According to Assumption \ref{assumption:xbounded}, we can shift the feasible region such that all variables are nonnegative, i.e., $\barx \subseteq \Ze^{n_1}\times\Re^{n_2}\bigcap ([0, B_1] \times \cdots \times [0, B_n])$.

\subsection{Related literature}
An SMIP is a widely-used modeling paradigm for decision-making under uncertainty, where decisions are made as uncertain parameters are revealed over time. However, solving the SMIP is usually computationally challenging. A common approach to addressing this challenge involves decomposing an SMIP into smaller scenario-based local recourse problems, which can be optimized independently and subsequently provide valid inequalities to the master problem. To accelerate this decomposition method, valid inequalities for the epigraph of the expected recourse function are iteratively added to enhance the solution quality of the master problem. These inequalities are commonly called ``cuts'' in the literature.

Benders decomposition \citep{benders1962partitioning, van1969shaped} is a classic method that solves the linear programming (LP) relaxation of local recourse problems to generate cuts. This approach is only guaranteed to solve SMIP models with continuous second-stage decisions to optimality. Therefore, Benders cuts can only recover the epigraph of LP relaxation of a given local recourse function, since there exists an integrality gap between a local recourse problem and its LP relaxation in each scenario. To resolve this issue and improve the performance of Benders decomposition, one earlier effort is to obtain integer L-shaped cuts \citep{laporte1993integer} for purely binary first-stage decisions. \cite{angulo2016improving} improve this method by introducing an alternative cut-generating strategy and deriving new L-shaped cuts based on the explored recourse function values. In addition, disjunctive programming \citep{sen2005c, sen2006decomposition}, Fenchel cuts \citep{ntaimo2013fenchel,gade2014decomposition}, and Gomory cuts \citep{gade2014decomposition, zhang2014finitely} have also been studied to solve an SMIP with purely binary or integer first-stage variables. However, these methods struggle with problems of general mixed integer first-stage variables, even when the underlying distribution has only one scenario.

Building on Benders decomposition, \cite{zou2019stochastic} propose Lagrangian cuts and strengthened Benders cuts, which can be viewed as special cases of the Benders dual decomposition (BDD) framework later introduced by \cite{rahmaniani2020benders}. Unlike traditional Benders decomposition, which takes the duals of LP relaxations of the local recourse problems, the BDD method introduces a copy of the first-stage variables in each local recourse problem and enforces nonanticipativity constraints to ensure that these copies are equal, similar to the dual decomposition method \citep{caroe1999dual}. The key difference between Benders decomposition and BDD is that BDD retains the integrality of second-stage variables when taking the Lagrangian dual with respect to the nonanticipativity constraints. Strengthened Benders cuts are derived by optimizing Lagrangian functions, with Lagrangian dual multipliers equal to the Bender cuts coefficients. These cuts are parallel to the original Benders cuts but are shifted upward to have higher vertical intercepts by enforcing the integrality constraints in the inner minimization problems of the Lagrangian duals. For the Lagrangian cuts, the cut coefficients are optimal dual multipliers. This family of cuts is exact for binary first-stage variables, which guarantees convergence of the cutting plane method to solve the SMIP problem to optimality. It has also been shown to be sufficient to recover the convex hulls of epigraphs of local recourse functions \citep{chen2022generating}. However, generating such cuts can be computationally demanding. Existing methods rely on first-order approaches and require solving multiple MIPs that correspond to the inner minimization problems. Moreover, the Lagrangian dual problem may have multiple optimal solutions, though not all are effective Lagrangian cuts. For instance, \cite{bansal2024computational} demonstrate that the integer L-shaped cut-- known for its weak global approximation and resulting slow convergence-- is, in fact, a Lagrangian cut. The existing literature improves the implementation of Lagrangian cuts primarily in three areas: (i) Accelerating the solution procedure of Lagrangian dual problems, (ii) selecting cuts with specific desirable properties, and (iii) improving the decomposition and cutting plane framework by incorporating additional cut families.

\cite{rahmaniani2020benders} propose a three-phase implementation strategy in which Lagrangian cuts are generated at the final stage by heuristically solving Lagrangian duals using an inner approximation. They also suggest partially relaxing integrality constraints or fixing certain variables in each local recourse problem and solving the Lagrangian dual to $\epsilon$-optimality. \cite{chen2022generating} provide a new formulation that can be used to derive both optimality and feasibility cuts, with the cut coefficients restricted to the span of previous Benders cuts' coefficients under certain normalization. Recently, in the context of multistage SIMPs, \cite{bansal2024computational} conduct computational studies on the alternating cut procedure for Lagrangian cuts and other valid cut families to improve overall performance on multistage SMIP problems. \cite{fullner2024lipschitz} generate tight Lagrangian cuts with bounded coefficients using regularized local recourse problems, and \cite{fullnernew} extend existing concepts in Benders decomposition literature to derive facet-defining, Pareto-optimal, or deep cuts using proper normalization techniques.

However, linear cuts, such as Lagrangian cuts, can at most recover the convex envelopes of local recourse functions. In addition, the expectation of the convex envelopes of the local recourse functions is no larger than and can be strictly less than the convex envelope of the expected recourse function in certain first-stage decisions \citep{van2023converging}. To address this issue, \cite{zou2019stochastic} propose to approximate SMIP problems by stochastic integer programs with purely binary first-stage decisions and prove that under some assumptions, such as Lipschitz continuity, the number of binary variables required can be bounded based on the desired precision of an optimal solution. Other methods directly solve the SMIP problems. For example, \cite{ahmed2022stochastic} introduce reverse norm cuts, which leverage the Lipschitz continuity of local recourse functions, and augmented Lagrangian cuts. The tightness of the augmented Lagrangian cuts is ensured by the strong duality of the augmented Lagrangian duals, as shown in \cite{feizollahi2017exact}.
\cite{zhang2022stochastic} derive generalized conjugacy cuts based on regularized value functions.  \cite{van2023converging} derive scaled cuts for the expected recourse function and recently extend them on multistage SMIP problems \citep{romeijnders2024benders}.

\subsection{Summary of contributions}
The main contributions of this work are summarized below.
\begin{enumerate}[1)]
\item We introduce a new family of nonlinear cuts, referred to as ``ReLU Lagrangian cuts." These cuts are effective for solving a two-stage SMIP that involves general mixed integer decisions in the first stage. Through establishing a strong duality theory, we show that the ReLU Lagrangian cuts are tight, enabling the recovery of the epigraphs for both the local recourse functions and the expected recourse function.

\item We compare ReLU Lagrangian cuts with existing cut families from two perspectives: (i) ReLU Lagrangian cuts provide outer approximations of local recourse epigraphs and their convex hulls that are at least as strong as those from existing methods, requiring fewer iterations before the cutting plane method terminates; and (ii) since existing cuts are special ReLU Lagrangian cuts, they can serve as starting points for generating the strong ReLU Lagrangian cuts.

\item We show the equivalence between traditional Lagrangian cuts and ReLU Lagrangian cuts for purely binary first-stage decisions. We propose a cut generation scheme that begins with integer L-shaped cuts and strengthens them by solving LPs. This approach overcomes a limitation in the existing literature, where obtaining ideal coefficients for Lagrangian cuts often requires solving multiple MILPs within an iterative procedure.
\item For pure integer first-stage decisions, we theoretically compare two alternative approaches for generating initial ReLU Lagrangian cuts: (i) focusing on the original space to generalize integer L-shaped cuts and (ii) using the binary expansion technique from \cite{zou2019stochastic}. For mixed integer first-stage decisions, we propose both binary search and closed-form methods to derive the initial ReLU Lagrangian cuts. In both cases, the cuts are further strengthened using an LP-based approach.
\end{enumerate}

\paragraph{Organization.} In Section \ref{sec_ReLU_cuts}, we review the ordinary Lagrangian cuts and introduce the ReLU Lagrangian cuts. In Section \ref{sec_binary} and Section \ref{sec_general_mixed}, we study the properties of ReLU Lagrangian cuts for SMIPs with purely binary and general mixed integer first-stage decisions, respectively. Section \ref{sec_numerical} shows the numerical evidence of the effectiveness of the proposed ReLU Lagrangian cuts, and Section \ref{sec_con} concludes the paper.\looseness=-1

\paragraph{Notation.}
For a given function $f$ defined with domain $S$, let $\epi_S(f):= \{(\bm x, \theta)\in S\times \Re: \theta \geq f(\bm x)\}$ denote its epigraph and $\conv(\epi_S(f))$ denote the convex hull of its epigraph. For a convex set $S$, let $\mathrm{ext}(S)$ denote the set of its extreme points. Let us define the sets $\barx = \{\bm{x} \in \Ze^{n_1} \times \Re^{n_2} : \bm{A}\bm{x} \geq \bm{b}\}$, $X = \{\bm{x} \in \{0,1\}^n : \bm{A}\bm{x} \geq \bm{b}\}$, and $X^{LP} = \{\bm{x} \in [0,1]^n : \bm{A}\bm{x} \geq \bm{b}\}$. Given a positive integer $\tau$ and a nonnegative integer $\ell\leq \tau$, we let $[\tau]=\{1,\ldots,\tau\}$ and $[\ell,\tau]=\{\ell,\ell+1,\ldots,\tau\}$. A variable is bold when it is a vector.

\section{ReLU Lagrangian Cuts}\label{sec_ReLU_cuts}
In this section, we introduce a new family of cuts, termed ReLU Lagrangian cuts, which generalize ordinary Lagrangian cuts \citep{zou2019stochastic}. Throughout this paper, unless otherwise specified, each cut is a local cut; that is, it is derived based on a given scenario $s \in [N]$.

\subsection{Preliminary: ordinary Lagrangian cuts}

Lagrangian cuts, first introduced by \cite{zou2019stochastic}, are derived by dualizing the nonanticipativity constraints. Namely, for a given feasible first-stage decision $\hat{\bm{x}}\in\barx$ and a scenario $s\in[N]$, we have
\begin{align}
\Q_s(\hat{\bm{x}}) &= \inf_{\bm{x}} \left\{\Q_s(\bm{x}) : \bm{x} = \hat{\bm{x}}, \ \bm{x} \in \barx \right\}= \inf_{\bm{x}} \left\{\theta : (\bm{x}, \theta) \in \epi_{\barx}(\Q_s), \ \bm{x} = \hat{\bm{x}} \right\},\label{lag_primal} 
\end{align}
where the epigraphical set
\begin{equation*}
\begin{aligned}
\epi_{\barx}(\Q_s)=&\left\{(\bm x, \theta): \theta \geq \min_{\bm y}\left\{(\bm q^s)^\top \y: \bm T^s\bm x+\bm W^s\bm y \geq \bm h^s, \bm y\in\Ze^{m_1}\times\Re^{m_2}\right\}, \bm x\in \barx\right\}\\
=&\left\{(\bm x, \theta):\exists \bm y\in\Ze^{m_1}\times\Re^{m_2}, \theta \geq (\bm q^s)^\top \y,  \bm T^s\bm x+\bm W^s\bm y \geq \bm h^s,  \bm x\in \barx\right\}.
\end{aligned}
\end{equation*}
We observe that set $\epi_{\barx}(\Q_s)$ can be described by linear inequalities with integrality constraints. 
Therefore, its convex hull $\conv(\epi_{\barx}(\Q_s))$ is a polyhedron. Following theorem 1 from \cite{geoffrion1974lagrangean},  We further obtain that
\begin{subequations}\label{lag_cut_eqs2}
\begin{align}
\Q_s(\hat{\bm{x}}) 
&\geq \min_{\bm{x}} \left\{\theta : (\bm{x}, \theta) \in \conv(\epi_{\barx}(\Q_s)), \bm{x} = \hat{\bm{x}} \right\} \label{primal0} \\
&
= \max_{\bm{\pi}} \left\{\min_{\bm{x}} \left\{\Q_s(\bm{x}) + \bm{\pi}^\top (\hat{\bm{x}} - \bm{x}) : \bm{x} \in \barx \right\}\right\},
\label{lag_prob}
\end{align}
\end{subequations}
where 
the equality follows from the strong duality of linear programming and
the fact that minimizing a linear function over a set is equivalent to minimizing over its convex hull.
Here, problem \eqref{lag_prob} is a Lagrangian dual of the equivalent formulation \eqref{lag_primal} for a given first-stage decision. The LP \eqref{primal0} is the primal characterization of the Lagrangian dual. This LP is feasible and bounded since $\hatx\in\barx$ and $\Q_s(\bm x)\geq L$ for all $\bm x \in \barx$. Thus, there always exists an optimal dual multiplier $\bm\pi$ to \eqref{lag_prob}. \cite{zou2019stochastic} further prove that the inequality of \eqref{primal0} holds at equality when the first-stage variables are binary and introduce the following Lagrangian cuts.
\begin{definition}[Lagrangian Cuts]
    Given a feasible first-stage decision $\hat{\bm{x}}\in \barx$, let $\hat{\bm{\pi}}$ be optimal to the outer maximization problem of \eqref{lag_prob}. A Lagrangian cut takes the form
    \begin{equation}\label{lag_cut}
\theta \geq \mathcal{L}_s(\hat{\bm{\pi}}; \hat{\bm{x}}) + \hat{\bm{\pi}}^\top (\bm{x} - \hat{\bm{x}}),
\end{equation}
where $\mathcal{L}_s(\bm{\pi}; \hat{\bm{x}}) := \min_{\bm{x}} \left\{\Q_s(\bm{x}) + \bm{\pi}^\top (\hat{\bm{x}} - \bm{x}) : \bm{x} \in \barx \right\}$.
\end{definition}

By preserving the integrality constraints in each local recourse problem, the Lagrangian dual \eqref{lag_prob} yields a stronger lower bound for the local recourse function value compared to the bound obtained from the LP relaxation used to derive a Benders cut. Under certain conditions, this lower bound coincides with the local recourse function value, and the resulting cut is referred to as a tight cut.
\begin{definition}[Tight Cuts]
A cut generated for a function $f$ at the incumbent solution $\hatx$ is tight if the cut's corresponding hyperplane passes through the point $(\hatx,f(\hatx))$.
\end{definition}
It is known from \cite{zou2019stochastic} that when the first-stage decision variables are binary, the Lagrangian cuts are tight. However, this property does not extend to general mixed integer first-stage stochastic programs. In fact, the primal characterizations \eqref{primal0} of the Lagrangian dual also define the convex envelopes of the local recourse functions, as formally proved in \cite{fullner2024lipschitz}. Furthermore, \cite{chen2022generating} show that Lagrangian cuts are sufficient to describe the convex hulls of epigraphs of the local recourse functions, which we refer to as the ``local convex hulls."

\begin{proposition}\label{prop:conv_env}[Theorem 3.9, \citealt{fullner2024lipschitz}]
    The primal characterization $$\inf_{\bm{x}} \left\{\theta : (\bm{x}, \theta) \in \conv(\epi_{\barx}(\Q_s)),  \bm{x} = \hat{\bm{x}} \right\}=\mathrm{co}(\Q_s(\hatx)),$$
where the convex envelope of $\Q_s$, denoted by $\mathrm{co}(\Q_s): \conv(\barx)\rightarrow\Re$, is defined as $\mathrm{co}(\Q_s)(\bm x) = \sup\{g(\bm x): g\text{ is convex and } g(\bm z)\leq \Q_s(\bm z), \forall \bm z\in\barx\}$.
\end{proposition}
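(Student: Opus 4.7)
The plan is to prove the two opposite inequalities separately, exploiting the standard relationship between the convex hull of an epigraph and the epigraph of a convex envelope. Denote the value of the left-hand side by $v(\hat{\bm x})$, so that the goal is to show $v(\hat{\bm x}) = \mathrm{co}(\Q_s)(\hat{\bm x})$.

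For the direction $v(\hat{\bm x}) \geq \mathrm{co}(\Q_s)(\hat{\bm x})$, I would fix any feasible $(\hat{\bm x},\theta) \in \conv(\epi_{\barx}(\Q_s))$ and use Carath\'eodory's theorem to write it as a convex combination $\sum_i \lambda_i (\bm x_i,\theta_i)$ with $(\bm x_i,\theta_i)\in\epi_{\barx}(\Q_s)$. Since $\mathrm{co}(\Q_s)\leq \Q_s$ on $\barx$ by definition, each $\theta_i \geq \Q_s(\bm x_i) \geq \mathrm{co}(\Q_s)(\bm x_i)$; then convexity of $\mathrm{co}(\Q_s)$ on $\conv(\barx)$ yields $\mathrm{co}(\Q_s)(\hat{\bm x}) \leq \sum_i \lambda_i \mathrm{co}(\Q_s)(\bm x_i) \leq \sum_i \lambda_i \theta_i = \theta$. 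Taking the infimum over such $\theta$ gives the desired bound.

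For the reverse inequality $v(\hat{\bm x}) \leq \mathrm{co}(\Q_s)(\hat{\bm x})$, I would introduce the marginal function $g(\bm x):= \inf\{\theta:(\bm x,\theta)\in \conv(\epi_{\barx}(\Q_s))\}$ and verify it is convex on $\conv(\barx)$ and dominated by $\Q_s$ on $\barx$. Convexity follows because $g$ is the value function of minimizing a linear objective over a convex set parametrized linearly by $\bm x$; the domination $g(\bm x)\leq \Q_s(\bm x)$ on $\barx$ follows because $(\bm x,\Q_s(\bm x)) \in \epi_{\barx}(\Q_s) \subseteq \conv(\epi_{\barx}(\Q_s))$. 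Since $\mathrm{co}(\Q_s)$ is, by definition, the pointwise supremum of all convex functions dominated by $\Q_s$ on $\barx$, we get $g \leq \mathrm{co}(\Q_s)$ on $\conv(\barx)$, and evaluating at $\hat{\bm x}$ delivers the bound.

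The main subtlety I expect to handle carefully is the well-posedness of the definitions, namely that $\conv(\epi_{\barx}(\Q_s))$ is closed (so the infimum is attained, not merely approached) and that the effective domain of $g$ coincides with $\conv(\barx)$ (so the comparison with $\mathrm{co}(\Q_s)$ at $\hat{\bm x}$ is legitimate). Both facts follow from the rationality of the problem data together with the fundamental theorem of integer programming, which guarantees that $\epi_{\barx}(\Q_s)$ is a finite union of rational polyhedra whose convex hull is therefore a polyhedron; Assumption~\ref{assumption:rel_comp_rec} (relatively complete recourse) ensures $\Q_s$ is finite on $\barx$, so that the projection of this polyhedron onto the $\bm x$-coordinate is exactly $\conv(\barx)$. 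With these two observations in hand, the two inequalities combine to yield the claimed equality.
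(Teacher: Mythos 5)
Your proof is correct. Note that the paper does not prove this proposition at all: it is imported verbatim as Theorem~3.9 of \citet{fullner2024lipschitz}, so there is no in-paper argument to compare against. Your two-inequality argument is the standard one for identifying the lower boundary of $\conv(\epi_{\barx}(\Q_s))$ with the convex envelope: Carath\'eodory plus convexity of $\mathrm{co}(\Q_s)$ for the ``$\geq$'' direction, and exhibiting the marginal function $g(\bm x)=\inf\{\theta:(\bm x,\theta)\in\conv(\epi_{\barx}(\Q_s))\}$ as an admissible competitor in the supremum defining $\mathrm{co}(\Q_s)$ for the ``$\leq$'' direction. Your handling of the side conditions is also sound; the only needed facts are that $g>-\infty$ (which follows from the uniform lower bound $L$ in Assumption~\ref{assumption:lb}, since every convex combination of epigraph points has $\theta\geq L$) and that the effective domain of $g$ is $\conv(\barx)$ (which follows from relatively complete recourse), while closedness of the convex hull and attainment of the infimum, though true by polyhedrality, are not actually required for the equality.
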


\begin{proposition}\label{thm:epi_description}[Proposition 2, Theorem 3, \citealt{chen2022generating}]
Let us 
define $\dxhat=\left\{\bm \pi: \bm \pi \text{ is optimal to \eqref{lag_prob}} \right\}$ for any feasible first-stage decision $\hatx\in\barx$. Then, the local convex hull $\conv(\epi_{\barx}(\Q_s)) = \{(\bm x, \theta)\in\conv(\barx)\times\Re: \theta \geq \mathcal{L}_s(\bm \pi; \hatx)+ \bm\pi^\top (\bm x-\hat{\bm x}),  \forall \hat{\bm x}\in \barx, \bm \pi \in \dxhat \}$. 
\end{proposition}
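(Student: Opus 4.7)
The plan is to establish the claimed set equality by proving both inclusions. The $\subseteq$ direction (validity of each Lagrangian cut on the convex hull) follows immediately from the definition of $\mathcal{L}_s$. The $\supseteq$ direction (the cuts exhaust the convex hull over $\conv(\barx)$) is the substantive content, and I would attack it by contradiction, combining Proposition \ref{prop:conv_env} with the polyhedral structure of $\conv(\epi_{\barx}(\Q_s))$.

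For the first inclusion, fix any $\hatx\in\barx$ and $\bm\pi\in\dxhat$. By the definition $\mathcal{L}_s(\bm\pi;\hatx) = \min_{\bm x\in\barx}\{\Q_s(\bm x) + \bm\pi^\top(\hatx-\bm x)\}$, we have $\Q_s(\bm x)\geq \mathcal{L}_s(\bm\pi;\hatx) + \bm\pi^\top(\bm x-\hatx)$ for every $\bm x\in\barx$. Since this is a linear inequality in $(\bm x,\theta)$ valid on $\epi_{\barx}(\Q_s)$, it extends to $\conv(\epi_{\barx}(\Q_s))$.

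For the reverse inclusion, a convenient rewriting is $\mathcal{L}_s(\bm\pi;\hatx) + \bm\pi^\top(\bm x-\hatx) = \bm\pi^\top \bm x + \phi(\bm\pi)$, where $\phi(\bm\pi):=\min_{\bm x\in\barx}\{\Q_s(\bm x)-\bm\pi^\top \bm x\}$, making explicit that a cut depends on $\hatx$ only through the membership $\bm\pi\in\dxhat$. Suppose for contradiction that some $(\bm x^*,\theta^*)\in\conv(\barx)\times\Re$ satisfies every cut in the stated family but lies outside $\conv(\epi_{\barx}(\Q_s))$; Proposition \ref{prop:conv_env} then forces $\theta^* < \mathrm{co}(\Q_s)(\bm x^*)$. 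Combining that proposition with the LP duality in \eqref{primal0} yields $\mathrm{co}(\Q_s)(\bm x^*) = \sup_{\bm\pi}\{\bm\pi^\top \bm x^* + \phi(\bm\pi)\}$, and polyhedrality of $\conv(\epi_{\barx}(\Q_s))$ ensures the supremum is attained at some $\bm\pi^*$. The corresponding supporting hyperplane $\theta = (\bm\pi^*)^\top \bm x + \phi(\bm\pi^*)$ touches $\conv(\epi_{\barx}(\Q_s))$ along a nonempty face, which contains an extreme point of the form $(\hatx,\Q_s(\hatx))$ with $\hatx\in\barx$, since extreme points of the convex hull project to points of $\barx$. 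At such $\hatx$, the identity $\Q_s(\hatx) = (\bm\pi^*)^\top \hatx + \phi(\bm\pi^*) = \mathcal{L}_s(\bm\pi^*;\hatx)$ certifies $\bm\pi^*\in\dxhat$. The cut indexed by $(\hatx,\bm\pi^*)$ then reduces to $\theta \geq (\bm\pi^*)^\top \bm x + \phi(\bm\pi^*)$ and is violated by $(\bm x^*,\theta^*)$, a contradiction.

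The hardest part will be the existence step inside the second inclusion: guaranteeing that every supporting non-vertical hyperplane of $\conv(\epi_{\barx}(\Q_s))$ admits a contact point of the form $(\hatx,\Q_s(\hatx))$ with $\hatx\in\barx$, so that the dual optimizer $\bm\pi^*$ is actually realized as a Lagrangian optimum at some $\hatx\in\barx$. This relies on the polyhedrality of $\conv(\epi_{\barx}(\Q_s))$ and on the characterization of its extreme points as lifts of points in $\barx$, both of which ultimately trace back to the fundamental theorem of integer programming under Assumption \ref{assumption:rel_comp_rec}.
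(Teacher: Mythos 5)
The paper does not prove this proposition itself: it is imported verbatim from \cite{chen2022generating} (their Proposition 2 and Theorem 3), so there is no in-paper argument to compare against. Your proof is correct and self-contained, which is a genuine added value over the paper's citation. The $\subseteq$ direction is immediate, as you say. For $\supseteq$, your separation argument is sound: for $(\bm x^*,\theta^*)$ with $\bm x^*\in\conv(\barx)$ outside the hull, Proposition \ref{prop:conv_env} gives $\theta^*<\mathrm{co}(\Q_s)(\bm x^*)$; the LP duality behind \eqref{primal0} (which holds for any $\bm x^*\in\conv(\barx)$, not only $\bm x^*\in\barx$, since the primal is feasible and bounded below by $L$) produces an attained maximizer $\bm\pi^*$; and the step you flag as hardest --- exhibiting $\hatx\in\barx$ with $\Q_s(\hatx)=(\bm\pi^*)^\top\hatx+\phi(\bm\pi^*)$ so that $\bm\pi^*\in\dxhat$ --- follows even more directly than your extreme-point detour: any minimizer $\hatx\in\barx$ of your $\phi(\bm\pi^*)=\min_{\bm x\in\barx}\{\Q_s(\bm x)-(\bm\pi^*)^\top\bm x\}$ already lies on the supporting hyperplane, and then $\mathcal{L}_s(\bm\pi^*;\hatx)=\Q_s(\hatx)$ together with the universal bound $\mathcal{L}_s(\bm\pi;\hatx)\leq\Q_s(\hatx)$ (take $\bm x=\hatx$ in the inner minimum) certifies $\bm\pi^*\in\dxhat$. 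The one point worth making explicit is attainment of that inner minimum (equivalently, nonemptiness of the exposed face), which is where polyhedrality and the rationality assumptions genuinely enter; your appeal to pointedness of $\conv(\epi_{\barx}(\Q_s))$ (its recession cone is $\cone\{(\bm 0,1)\}$ by Assumption \ref{assumption:xbounded}) and to the fact that its extreme points have the form $(\bm x,\Q_s(\bm x))$ with $\bm x\in\barx$ (the paper's Corollary \ref{coro}) covers this adequately.
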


The following corollaries provide insights for Lagrangian cuts.
\begin{corollary}\label{cor_1_prop1}
   Given a first-stage solution $\hatx\in\barx$, a dual multiplier $\bm\pi\in\dxhat$ if and only if it satisfies the optimality condition:
\begin{equation}\label{opt_cond_general}
\mathcal{L}_s(\bm\pi;\hatx) = \min_{\bm x}\{\Q_s(\bm x)+\bm\pi^\top(\hatx-\bm x):\bm x\in\barx\} \geq \mathrm{co}(\Q_s)(\hatx).
\end{equation} 
\end{corollary}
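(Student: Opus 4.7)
The plan is to derive this corollary directly from Proposition \ref{prop:conv_env}, which identifies the optimal value of the Lagrangian dual \eqref{lag_prob} with the convex envelope $\mathrm{co}(\Q_s)(\hatx)$. The argument is essentially an application of weak duality combined with this strong-duality identification, so the work is to assemble the two inequalities in the right direction.

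First, I would establish the universal upper bound $\mathcal{L}_s(\bm\pi;\hatx)\le \mathrm{co}(\Q_s)(\hatx)$ for every $\bm\pi$. This is weak duality for \eqref{lag_prob}: since $\hatx$ is feasible in the primal characterization \eqref{primal0} with objective value $\mathrm{co}(\Q_s)(\hatx)$ (by Proposition \ref{prop:conv_env}), and $\mathcal{L}_s(\bm\pi;\hatx)$ is the inner minimum of the Lagrangian for an arbitrary multiplier $\bm\pi$, the dual value is dominated by the primal value. Concretely, for any $\bm x\in\barx$ we have $\Q_s(\bm x)+\bm\pi^\top(\hatx-\bm x)\le \Q_s(\hatx)$ after taking $\bm x=\hatx$, so $\mathcal{L}_s(\bm\pi;\hatx)\le \Q_s(\hatx)$; more importantly, applying the same Lagrangian relaxation on the convexified problem \eqref{primal0} (whose feasible set $\conv(\epi_{\barx}(\Q_s))$ is polyhedral) preserves the same dual function, so its optimal value equals $\mathrm{co}(\Q_s)(\hatx)$ by LP strong duality, yielding the desired upper bound.

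Next, I would combine this with the definition of $\dxhat$ to prove the equivalence. For the $(\Rightarrow)$ direction, if $\bm\pi\in\dxhat$, then $\bm\pi$ attains the maximum of $\mathcal{L}_s(\cdot;\hatx)$, which by Proposition \ref{prop:conv_env} is exactly $\mathrm{co}(\Q_s)(\hatx)$; hence $\mathcal{L}_s(\bm\pi;\hatx)=\mathrm{co}(\Q_s)(\hatx)\ge \mathrm{co}(\Q_s)(\hatx)$, giving \eqref{opt_cond_general}. For the $(\Leftarrow)$ direction, suppose $\bm\pi$ satisfies \eqref{opt_cond_general}. Combining this with the weak duality bound forces $\mathcal{L}_s(\bm\pi;\hatx)=\mathrm{co}(\Q_s)(\hatx)$, which is the maximal value of the outer problem in \eqref{lag_prob}; thus $\bm\pi$ is optimal, i.e., $\bm\pi\in\dxhat$.

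The proof is essentially bookkeeping, with no real obstacle: the substantive content is already carried by Propositions \ref{prop:conv_env} and \ref{thm:epi_description}. The only point that requires care is making the weak duality step explicit, since \eqref{lag_prob} is a max-min over $\barx$ (not over $\conv(\epi_{\barx}(\Q_s))$), and one should note that the Lagrangian relaxation value is invariant under replacing the feasible set by its closed convex hull intersected with the epigraph—this is exactly what allows the LP strong-duality argument to produce the tight upper bound $\mathrm{co}(\Q_s)(\hatx)$.
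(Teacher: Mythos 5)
Your proposal is correct and matches the paper's intended argument: the paper states that this corollary "follows directly from Proposition \ref{prop:conv_env} and the strong duality between \eqref{primal0} and \eqref{lag_prob}," which is precisely the weak-duality upper bound $\mathcal{L}_s(\bm\pi;\hatx)\le \mathrm{co}(\Q_s)(\hatx)$ combined with the identification of the dual optimal value as $\mathrm{co}(\Q_s)(\hatx)$ that you assemble. Your write-up simply makes explicit the bookkeeping the paper leaves implicit.
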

The result in \Cref{cor_1_prop1} follows directly from \Cref{prop:conv_env} and the strong duality between \eqref{primal0} and \eqref{lag_prob}.

\begin{restatable}{corollary}{tight}\label{coro:tight_lag}
A Lagrangian cut generated at $\hatx\in\barx$ is tight if and only if there exists $\bm\alpha\in\Re^n$, $\bm\alpha\neq\bm{0}$, such that $\bm\alpha^\top \bm x + \theta \geq \bm\alpha^\top \hatx + \Q_s(\hatx)$ for all $(\bm x,\theta)\in\conv(\epi_{\barx}(\Q_s))$.
\end{restatable}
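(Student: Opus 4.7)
My plan is to establish the equivalence by unpacking the tightness identity $\Q_s(\hatx) = \mathcal{L}_s(\hat{\bm{\pi}};\hatx)$ directly, and to transfer between the Lagrangian function and the half-space description of $\conv(\epi_{\barx}(\Q_s))$ via the standard fact that a linear functional attains the same infimum over a set and over its convex hull (the identity already invoked in deriving \eqref{lag_cut_eqs2}). I would prove the two implications separately.

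For the $(\Rightarrow)$ direction, I would begin with a tight Lagrangian cut, which by plugging $\bm x = \hatx$ into \eqref{lag_cut} gives $\Q_s(\hatx) = \mathcal{L}_s(\hat{\bm{\pi}};\hatx) = \min_{\bm x \in \barx}\{\Q_s(\bm x) + \hat{\bm{\pi}}^\top(\hatx - \bm x)\}$. Rearranging yields $\Q_s(\bm x) - \hat{\bm{\pi}}^\top(\bm x - \hatx) \geq \Q_s(\hatx)$ for every $\bm x \in \barx$. Since $\theta \geq \Q_s(\bm x)$ on the epigraph, the inequality $\theta - \hat{\bm{\pi}}^\top(\bm x - \hatx) \geq \Q_s(\hatx)$ holds for every $(\bm x,\theta) \in \epi_{\barx}(\Q_s)$, and by linearity of the left-hand side it extends to every point of $\conv(\epi_{\barx}(\Q_s))$. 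Setting $\bm\alpha := -\hat{\bm{\pi}}$, which is nonzero whenever the generated cut is nontrivial in $\bm x$, produces exactly the claimed inequality.

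For the $(\Leftarrow)$ direction, I would set $\bm\pi := -\bm\alpha$ and rewrite the hypothesis as $\theta - \bm\pi^\top(\bm x - \hatx) \geq \Q_s(\hatx)$ for every $(\bm x,\theta) \in \conv(\epi_{\barx}(\Q_s))$. Minimizing this linear functional over the epigraph and using the definition of $\mathcal{L}_s$ gives $\mathcal{L}_s(\bm\pi;\hatx) = \min_{\bm x \in \barx}\{\Q_s(\bm x) - \bm\pi^\top(\bm x - \hatx)\} \geq \Q_s(\hatx)$. Plugging in $(\hatx,\Q_s(\hatx))$ on the epigraph produces the reverse inequality $\mathcal{L}_s(\bm\pi;\hatx) \leq \Q_s(\hatx)$, hence equality holds. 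By \Cref{cor_1_prop1}, this forces $\bm\pi \in \dxhat$ and $\mathrm{co}(\Q_s)(\hatx) = \Q_s(\hatx)$, so the Lagrangian cut generated at $\hatx$ with this multiplier reads $\theta \geq \Q_s(\hatx) + \bm\pi^\top(\bm x - \hatx)$, which passes through $(\hatx,\Q_s(\hatx))$ and is therefore tight.

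The main subtle point will be justifying that the minimum of the linear functional $\theta - \bm\pi^\top(\bm x - \hatx)$ over $\epi_{\barx}(\Q_s)$ agrees with its minimum over $\conv(\epi_{\barx}(\Q_s))$; this is the same convex-hull-over-linear-objective identity used in deriving \eqref{lag_cut_eqs2}, and it rests on the rational-polyhedron structure of $\conv(\epi_{\barx}(\Q_s))$ ensured by Assumption \ref{assumption:rel_comp_rec}. A secondary subtlety is the role of the $\bm\alpha \neq \bm 0$ requirement: it encodes the natural nontriviality condition on the cut and rules out the degenerate flat inequality $\theta \geq \Q_s(\hatx)$, which would correspond to the uninteresting multiplier $\hat{\bm\pi} = \bm 0$ and arises only when $\hatx$ is a global minimizer of $\Q_s$ over $\barx$.
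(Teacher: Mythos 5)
Your proof is correct and takes essentially the same route as the paper's: both directions reduce to the identity $\mathcal{L}_s(\bm\pi;\hatx)=\Q_s(\hatx)$ via the primal characterization \eqref{primal0} and the fact that a linear functional attains the same infimum over $\epi_{\barx}(\Q_s)$ and over its convex hull. You are in fact more explicit than the paper, whose published argument only verifies the ``if'' direction; the one loose end you share with it is the degenerate case $\hat{\bm\pi}=\bm{0}$, where a nonzero $\bm\alpha$ satisfying the stated inequality need not exist even though the flat cut $\theta\geq\Q_s(\hatx)$ is tight.
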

\begin{proof}
    See Appendix \ref{pf:coro_tight_lag}. \qed
\end{proof}
When $(\hatx, \Q_s(\hatx))$ is an extreme point of $\conv(\epi_{\barx}(\Q_s))$, such a supporting hyperplane always exists, and a Lagrangian cut derived at this point is tight. The following corollary shows that a local convex hull can be characterized using tight Lagrangian cuts generated at its extreme points.  Since the set $\conv(\epi_{\barx}(\Q_s))$ is a polyhedron and $\barx$ is nonempty and bounded, we have that 
\begin{corollary}\label{coro}
Set $\conv(\epi_{\barx}(\Q_s)) = \conv\{(\bm x^k, \Q_s(\bm x^k)): k\in K\}+\cone\{(\bm{0},1)\}$, where $\{(\bm x^k, \Q_s(\bm x^k))\}_{k\in K}$ are the extreme points of $\conv({\epi_{\barx}(\Q_s)})$. In addition, the local convex hull can be represented as $\conv(\epi_{\barx}(\Q_s)) = \{(\bm x,\theta)\in\conv(\barx)\times\Re:\theta\geq \Q_s(\bm x^k) + \bm\pi^\top(\bm x-\bm x^k), \forall k\in K, \bm\pi\in\mathcal{D}_{\bm x^k}^s\}$.
\end{corollary}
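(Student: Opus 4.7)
Both parts of the corollary rest on the fact (noted in the excerpt just above \eqref{lag_cut_eqs2}) that $\conv(\epi_{\barx}(\Q_s))$ is a polyhedron, together with \Cref{prop:conv_env,thm:epi_description,cor_1_prop1}. My plan is to establish the Minkowski--Weyl decomposition first and then use it to prune the description from \Cref{thm:epi_description}.

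For the first equality, I would invoke the Minkowski--Weyl theorem: every polyhedron equals the convex hull of its extreme points plus its recession cone. To identify the recession cone as $\cone\{(\bm 0,1)\}$, I would combine \Cref{assumption:xbounded} (so $\conv(\barx)$ is bounded, forcing the $\bm x$-component of any recession direction to vanish) with \Cref{assumption:lb} (so the $\theta$-component must be nonnegative and only the upward direction survives). It then remains to verify that every extreme point of $\conv(\epi_{\barx}(\Q_s))$ has the form $(\bm x^k,\Q_s(\bm x^k))$ for some $\bm x^k\in\barx$. Any extreme point $(\hat{\bm x},\hat\theta)$ must attain the minimum $\theta$ in its fiber, which equals $\mathrm{co}(\Q_s)(\hat{\bm x})$ by \Cref{prop:conv_env}; moreover $\hat{\bm x}\in\barx$, since any $\hat{\bm x}\in\conv(\barx)\setminus\barx$ admits a nontrivial representation $\hat{\bm x}=\sum_i\lambda_i\bm x^i$ with $\bm x^i\in\barx$ that lifts (via the definition of the convex envelope) to a nontrivial convex decomposition of $(\hat{\bm x},\hat\theta)$ in $\conv(\epi_{\barx}(\Q_s))$, contradicting extremeness; and at $\hat{\bm x}\in\barx$ one has $\mathrm{co}(\Q_s)(\hat{\bm x})=\Q_s(\hat{\bm x})$, because otherwise $(\hat{\bm x},\mathrm{co}(\Q_s)(\hat{\bm x}))$ would itself be a strict convex combination of points in $\epi_{\barx}(\Q_s)$.

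For the second equality, I would start from the description in \Cref{thm:epi_description} (over all $\hat{\bm x}\in\barx$ and $\bm\pi\in\dxhat$) and shrink the generating set of $\hat{\bm x}$'s to the extreme points. The key observation is that every Lagrangian cut rewrites as $\theta\geq\bm\pi^\top\bm x+c_{\bm\pi}$, with $c_{\bm\pi}=\min_{\bm z\in\barx}\{\Q_s(\bm z)-\bm\pi^\top\bm z\}$ independent of the generating $\hat{\bm x}$. Since this inner minimization equals the linear program $\min_{(\bm x,\theta)\in\conv(\epi_{\barx}(\Q_s))}\{\theta-\bm\pi^\top\bm x\}$ over a polyhedron, the first part supplies an optimizer at some extreme point $(\bm x^{k_0},\Q_s(\bm x^{k_0}))$, so the cut rewrites as the Lagrangian cut at $\bm x^{k_0}$ with multiplier $\bm\pi$. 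I would then certify $\bm\pi\in\mathcal{D}_{\bm x^{k_0}}^s$ via \Cref{cor_1_prop1}, using $\mathcal{L}_s(\bm\pi;\bm x^{k_0})=\bm\pi^\top\bm x^{k_0}+c_{\bm\pi}=\Q_s(\bm x^{k_0})\geq\mathrm{co}(\Q_s)(\bm x^{k_0})$.

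The main obstacle is the extreme-point characterization in the first part, and specifically excluding candidate extreme points whose $\bm x$-coordinate lies in $\conv(\barx)\setminus\barx$; this is where the combinatorial structure of $\barx$ is needed, beyond mere polyhedrality of $\conv(\epi_{\barx}(\Q_s))$. Once that is in hand, the second part reduces to a short polyhedral LP bookkeeping argument.
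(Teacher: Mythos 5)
Your proposal is correct and follows the route the paper intends: the paper states this corollary without an explicit proof, as an immediate consequence of polyhedrality of $\conv(\epi_{\barx}(\Q_s))$, Assumptions \ref{assumption:lb}--\ref{assumption:xbounded} (which pin the recession cone to $\cone\{(\bm 0,1)\}$ and force extreme points to lie in $\epi_{\barx}(\Q_s)$ at height $\Q_s(\bm x^k)$), and the pruning of the description in \Cref{thm:epi_description} to extreme points via the observation that a Lagrangian cut depends on $\hat{\bm x}$ only through $c_{\bm\pi}=\min_{\bm z\in\barx}\{\Q_s(\bm z)-\bm\pi^\top\bm z\}$, certified by \Cref{cor_1_prop1}. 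The only cosmetic point is that your parenthetical claim $\mathrm{co}(\Q_s)(\hat{\bm x})=\Q_s(\hat{\bm x})$ is valid only at extreme points (which is how you in fact use it), not at arbitrary $\hat{\bm x}\in\barx$.
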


The above results show that by generating facet-defining Lagrangian cuts, we can efficiently recover the local convex hulls. Taking the expectation of the Lagrangian cuts across all scenarios, we obtain a cut that is valid for the expected recourse function. This cut is tight for the expected recourse function if and only if the cut for each individual scenario is tight. However, due to the linearity of a Lagrangian cut, tightness is achieved only if the incumbent solution used to construct the Lagrangian cut can be separated from the local convex hull $\conv(\epi_{\barx}(\Q_s)) $ for each $s\in [N]$. Thus, this approach may yield only a lower estimate of the expected recourse function rather than an exact characterization, resulting in only a lower bound of \eqref{master_problem} (see the example below).
\begin{example}\label{eg: conv_env}
Consider a two-stage problem $\min\{-x+\Q(x): x\in\{0,1,2\}\}$, where $\Q(x)=\frac{1}{2}\Q_1(x)+\frac{1}{2}\Q_2(x)$ and local recourse functions are given by $\Q_1(x) = \min\{y: y\geq \frac{1}{2}x+1, y\in\Ze^+\}$ and $\Q_2(x) = \min\{y: y\geq 2x-1, y\in\Ze^+\}$. The values of the local recourse functions are $\Q_1(0) = 1, \Q_1(1) = 2, \Q_1(2) = 2$ (see \Cref{fig:eg1_s1}) and $\Q_2(0) = 0, \Q_2(1) = 1, \Q_2(2) = 3$ (see \Cref{fig:eg1_s2}). Thus, the expected recourse function values are $\mathcal{Q}(0) = \frac{1}{2}, \mathcal{Q}(1) = \frac{3}{2}, \mathcal{Q}(2) = \frac{5}{2}$. At $x=1$, the optimal Lagrangian dual values yield $\mathrm{co}(\Q_1)(1)=\frac{3}{2}$ and $\mathrm{co}(\Q_2)(1)=1$, resulting in an outer approximation of the expected recourse function: $\sum_{s\in[2]}p_s\mathrm{co}(\Q_s)(1)=\frac{5}{4}$. However, as shown in Figure \ref{fig:eg1_exp}, $\mathrm{co}(\mathcal{Q})(1)= \mathcal{Q}(1)=\frac{3}{2}$. In fact, solving the problem solely using Lagrangian cuts provides a lower bound of $\frac{1}{4}$ with the solution $x = 1$. However, the optimal value of this two-stage problem is $\frac{1}{2}$, achieved at points $x = 0, 1, 2$.\qedA
\end{example}
 \begin{figure}[htbp]
  \vspace{-10pt}
\subfigure[Scenario 1]{\label{fig:eg1_s1}
       \begin{tikzpicture}[scale=1.0]
    \begin{axis}[
        axis lines=middle,       
        xlabel={$x$},                 
        xtick={0,1,2},            
        ytick={0,1,2,3},         
        ymin=0, ymax=3.5,        
        xmin=0, xmax=2.5,     
        axis line style={-stealth},
        scatter/classes={
            a={mark=*,black}     
        },
        axis equal image,
    ]
    \addplot[only marks, mark=*, black] coordinates {(0,1) (1,2) (2,2)};

    \addplot[dashed, thick, red] coordinates {(0,1) (2,2)};

    \node at (axis cs:1, 2) [anchor=south east] {$\Q_1(x)$};
    \node at (axis cs:1, 1.6) [anchor=north west, red] {$\mathrm{co}(\Q_1)(x)$};

    \end{axis}
\end{tikzpicture}
}
    \hfill
\subfigure[Scenario 2]{\label{fig:eg1_s2}
       \begin{tikzpicture}[scale=1.0]
    \begin{axis}[
        axis lines=middle,       
        xlabel={$x$},                 
        xtick={0,1,2},            
        ytick={0,1,2,3},         
        ymin=0, ymax=3.5,        
        xmin=0, xmax=2.5,     
        axis line style={-stealth},
        scatter/classes={
            a={mark=*,black}     
        },
        axis equal image,
    ]
    \addplot[only marks, mark=*, black] coordinates {(0,0) (1,1) (2,3)};

    \addplot[dashed, thick, red] coordinates {(0,0) (1,1) (2,3)};

    \node at (axis cs:1, 1) [anchor=south east] {$\Q_2(x)$};
    \node at (axis cs:2.0, 2.0) [anchor=north, red] {$\mathrm{co}(\Q_2)(x)$};

    \end{axis}
\end{tikzpicture}
}
   \hfill
\subfigure[Expected recourse function]{\label{fig:eg1_exp}
       \begin{tikzpicture}[scale=1.0]
    \begin{axis}[
        axis lines=middle,       
        xlabel={$x$},                 
        xtick={0,1,2},            
        ytick={0,1,2,3},         
        ymin=0, ymax=3.5,        
        xmin=0, xmax=2.5,     
        axis line style={-stealth},
        scatter/classes={
            a={mark=*,black}     
        },
        axis equal image,
    ]
    \addplot[only marks, mark=*, black] coordinates {(0,1/2) (1,3/2) (2,5/2)};

    \addplot[dashed, thick, blue] coordinates {(0,1/2) (1,3/2) (2,5/2)};

    \addplot[dashdotted, thick, red] coordinates {(0,1/2) (1,5/4) (2,5/2)};

    \node at (axis cs:0.5, 1.2) [anchor=south] {$\Q(x)$};
    \node at (axis cs:1.3, 2.2) [anchor=south, blue] {$\mathrm{co}(\Q)(x)$};
    \node at (axis cs:0.5, 1) [anchor=north west, red] {\tiny $\frac{1}{2}\mathrm{co}(\Q_1)(x)+\frac{1}{2}\mathrm{co}(\Q_2)(x)$};
    \end{axis}
\end{tikzpicture}
}
    \caption{\centering The illustration of Example \ref{eg: conv_env}}
     \vspace{-10pt}
\end{figure}
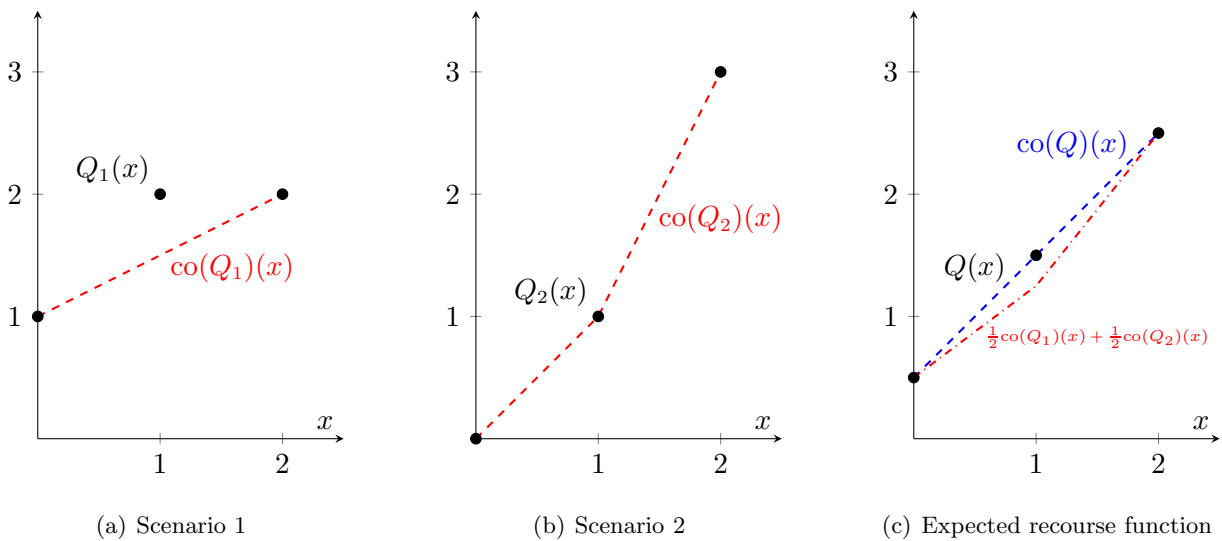

This example indicates that, for a general mixed integer stochastic program, the convex envelope of the expected recourse function may be strictly greater than the expected convex envelope of the local recourse functions. However, to solve \eqref{master_problem} to optimality, we need an approximation that is at least as strong as the convex envelope of the expected recourse function. Unfortunately, Lagrangian cuts are often insufficient for general mixed integer first-stage decisions.

To address this issue, one approach is to derive linear cuts directly for the expected recourse function, such as the scaled cuts proposed in \cite{van2023converging}, which cannot be computed using scenario decomposition methods.
Another approach is to develop non-linear cuts that recover epigraphs of local recourse functions directly instead of their convex hulls, such as reverse norm cuts and augmented Lagrangian cuts introduced in \cite{ahmed2022stochastic}. We will derive stronger cuts that remain tight for general mixed integer first-stage decisions while preserving the effectiveness of Lagrangian cuts in representing local convex hulls.

\subsection{ReLU Lagrangian cuts}
In this section, we introduce the ReLU Lagrangian cuts. Using strong duality, we prove that the cut generated at any feasible first-stage decision is tight, directly recovering the epigraphs of both the local recourse function and its expectation. Unlike ordinary Lagrangian cuts, where nonanticipativity constraints are linear and only linear cuts are produced that are valid for the local convex hull, we study new nonanticipativity constraints using the ReLU function.

For a given feasible first-stage decision $\hatx\in \barx$, its local recourse function value can be obtained by solving
\begin{equation}\label{gen_lag_primal}
\Q_s(\hatx)=\min_{\bm x\in \barx}\left\{\Q_s(\hatx): (x_i-\hat{x}_i)^+ = 0, (x_i-\hat{x}_i)^- = 0,\forall i\in[n]\right\},
\end{equation}
where we define two ReLU functions as $(x_i-\hat{x}_i)^+ = \max\{x_i-\hat{x}_i,0\}$ and $(x_i-\hat{x}_i)^- = \max\{\hat{x}_i-x_i,0\}$.
Taking the dual of this problem with respect to the nonanticipativity constraints, we have
\begin{equation}\label{gen_lag_dual_ori}
\underline{\Q}_s(\bm x) = \sup_{\bm\pi^+,\bm\pi^-\in\Re^n}{\mathcal{L}_s(\bm\pi^+,\bm\pi^-;\hatx)},
\end{equation}
where 
\begin{equation}\label{gen_lag_fun}
\begin{aligned}
\mathcal{L}_s(\bm x,\bm\pi^+,\bm\pi^-;\hatx):= &\inf_{\bm x\in \barx}{\Q_s(\bm x)-\sum_{i\in[n]}\pi^+_i(x_i-\hat{x}_i)^+ - \sum_{i\in[n]}\pi^-_i(x_i-\hat{x}_i)^-}.
\end{aligned}
\end{equation}
The difference between this ReLU Lagrangian dual \eqref{gen_lag_primal} and the ordinary Lagrangian dual \eqref{lag_prob} lies in the ReLU functions, which are nonlinear and can be either linearized or represented by introducing extra binary variables. We can prove that the strong duality holds for the ReLU Lagrangian dual \eqref{gen_lag_primal}, i.e., $\Q_s(\hatx)=\underline{\Q}_s(\hatx)$. As a side product, we can derive optimal dual multipliers in closed form.
\begin{restatable}{theorem}{dual}\label{thm:gen_lag_dual}
Under Assumptions
\ref{assumption:rel_comp_rec}, \ref{assumption:lb} and \ref{assumption:xbounded}, $\underline{\Q}_s(\hatx)=\Q_s(\hatx)$. Moreover, when 
\begin{equation}\label{closed_form}
\rho^*\geq \frac{\Q_s(\hatx) - L}{d},
\end{equation} $(-\bm{1}\rho^*, -\bm{1}\rho^*)$ is optimal to \eqref{gen_lag_dual_ori}, where
 $ 
d = \min\left\{ ||\bm{x}^k - \hatx||_1 : k \in K, \bm{x}^k \neq \hatx \right\},$ and $\{(\bm{x}^k, \bm{y}^k, \omega_k)\}_{k \in K}$ are extreme points of the set $\conv\{(\bm{x}, \bm{y}, \omega) : \bm{T}^s \bm{x} + \bm{W}^s \bm{y} \geq \bm{h}^s, \bm{x} \in \barx, 
\bm{y} \in \Ze^{m_1} \times \Re^{m_2}, \, ||\bm{x} - \hatx||_1 \leq \omega \}
$. 
\end{restatable}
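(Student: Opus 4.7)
The plan is to first establish weak duality for free and then exhibit the specific multiplier $(-\bm{1}\rho^*,-\bm{1}\rho^*)$ as a primal-feasible certificate of strong duality. Weak duality is immediate: plugging $\bm{x}=\hatx$ into the inner minimization of \eqref{gen_lag_fun} yields $\mathcal{L}_s(\bm{\pi}^+,\bm{\pi}^-;\hatx)\le \Q_s(\hatx)$ for every $(\bm{\pi}^+,\bm{\pi}^-)$, so $\underline{\Q}_s(\hatx)\le \Q_s(\hatx)$.

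For the reverse inequality, I would specialize the multipliers to $\bm{\pi}^+=\bm{\pi}^-=-\bm{1}\rho^*$ and observe the algebraic identity
\begin{equation*}
-\sum_{i\in[n]} \pi_i^+(x_i-\hat{x}_i)^+ -\sum_{i\in[n]} \pi_i^-(x_i-\hat{x}_i)^- \;=\; \rho^*\|\bm{x}-\hatx\|_1,
\end{equation*}
so that
\begin{equation*}
\mathcal{L}_s(-\bm{1}\rho^*,-\bm{1}\rho^*;\hatx) \;=\; \min_{\bm{x}\in\barx}\bigl\{\Q_s(\bm{x})+\rho^*\|\bm{x}-\hatx\|_1\bigr\}.
\end{equation*}
Introducing the epigraphical variable $\omega\ge \|\bm{x}-\hatx\|_1$ and substituting the MIP definition \eqref{loc_rec} of $\Q_s$, this rewrites as $\min\{(\bm{q}^s)^\top\bm{y}+\rho^*\omega:(\bm{x},\bm{y},\omega)\in S\}$, where $S$ is exactly the mixed-integer set whose convex hull is used in the theorem statement.

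Next I would invoke the fundamental theorem of integer programming (rational data and bounded $\barx$ make $\conv(S)$ a rational polyhedron) together with the fact that $(\bm{q}^s)^\top\bm{y}+\rho^*\omega\ge L$ on $S$ (because $(\bm{q}^s)^\top\bm{y}\ge \Q_s(\bm{x})\ge L$ by Assumption \ref{assumption:lb} and $\omega\ge 0$). The LP over $\conv(S)$ therefore attains its optimum at some vertex $(\bm{x}^k,\bm{y}^k,\omega_k)$. A short argument shows $\omega_k=\|\bm{x}^k-\hatx\|_1$ at every extreme point: otherwise $(\bm{x}^k,\bm{y}^k,\omega_k)$ would be the midpoint of the two feasible points $(\bm{x}^k,\bm{y}^k,\omega_k\pm\epsilon)$, contradicting extremality. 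Hence the optimum equals $\min_{k\in K}\bigl\{(\bm{q}^s)^\top\bm{y}^k+\rho^*\|\bm{x}^k-\hatx\|_1\bigr\}$.

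Finally, I would split the extreme points into two classes. If $\bm{x}^k=\hatx$, then $(\bm{q}^s)^\top\bm{y}^k\ge \Q_s(\hatx)$ by feasibility of $\bm{y}^k$ in \eqref{loc_rec}, so the candidate value is at least $\Q_s(\hatx)$. If $\bm{x}^k\ne\hatx$, then $\|\bm{x}^k-\hatx\|_1\ge d$, so the candidate value is at least $L+\rho^* d\ge \Q_s(\hatx)$ by the hypothesis \eqref{closed_form}. Combining the two cases gives $\mathcal{L}_s(-\bm{1}\rho^*,-\bm{1}\rho^*;\hatx)\ge \Q_s(\hatx)$, which together with weak duality yields $\underline{\Q}_s(\hatx)=\Q_s(\hatx)$ and certifies optimality of $(-\bm{1}\rho^*,-\bm{1}\rho^*)$. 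The main technical obstacle is the vertex-attainment step: one must be careful that the linear program over $\conv(S)$ is bounded (hence the uniform lower bound $L$ is essential), that $\conv(S)$ is indeed a rational polyhedron so extreme points exist, and that extremality forces $\omega_k=\|\bm{x}^k-\hatx\|_1$; once these points are nailed down, the remainder of the argument is case analysis.
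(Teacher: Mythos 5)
Your proposal is correct, and it is in fact a streamlined version of the paper's argument. The paper's proof has two parts: first it establishes $\underline{\Q}_s(\hatx)=\Q_s(\hatx)$ by a limiting argument, sending $\rho\to\infty$ in $\mathcal{L}_s(-\rho\bm{1},-\rho\bm{1};\hatx)$ and invoking lower semicontinuity of the MIP value function (Meyer, 1975) to show the optimal $\omega_\rho\to 0$ and $\liminf_\rho\Q_s(\bm x_\rho)\geq\Q_s(\hatx)$; only afterwards does it prove finite attainment via the extreme points of $\conv(\widebar{F})$, the quantity $d$, and the case split $\omega_k=0$ versus $\omega_k\geq d$. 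You skip the limiting argument entirely and go straight to the second part, correctly observing that exhibiting the explicit multiplier $(-\bm{1}\rho^*,-\bm{1}\rho^*)$ with $\mathcal{L}_s(-\bm{1}\rho^*,-\bm{1}\rho^*;\hatx)\geq\Q_s(\hatx)$ already certifies strong duality by weak duality, so the semicontinuity machinery is logically redundant. Your core steps (boundedness of the LP over $\conv(S)$ via Assumption \ref{assumption:lb}, vertex attainment, and the two-case analysis using $\omega_k\geq\|\bm x^k-\hatx\|_1\geq d$ when $\bm x^k\neq\hatx$) match the paper's second part; your extra observation that $\omega_k=\|\bm x^k-\hatx\|_1$ at extreme points is valid but not even needed, since feasibility alone gives $\omega_k\geq d$ in that case. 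Two minor points where you are no more (and no less) rigorous than the paper: vertex attainment implicitly assumes $\conv(S)$ is pointed, and you should note the degenerate case where every extreme point has $\bm x^k=\hatx$ (so $d$ is a minimum over an empty set), which the paper handles explicitly and your case analysis handles implicitly.
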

\begin{proof}
See Appendix \ref{pf:thm_dual}.\qed
\end{proof}
The above theorem shows that for any feasible first-stage decision, strong duality must hold and there exists a finite optimal dual solution. This ensures that tight cuts can be generated at any feasible point, which is a fundamental difference between the ReLU Lagrangian and the ordinary Lagrangian cuts. Before formally defining the ReLU Lagrangian cuts, we present the following corollaries, which are useful in subsequent discussions.
\begin{restatable}{corollary}{expfeas}\label{coro:expand_feas}
For any bounded set $S$ such that $S\supseteq\barx$, if $({\bm\pi^+}^*, {\bm\pi^-}^*)$ is optimal to $\sup_{\bm\pi^+,\bm\pi^-\in\Re^n}\inf_{\bm x\in S}L_s(\bm x, \bm\pi^+,\bm\pi^-;\hatx)$, then it is also optimal to $\sup_{\bm\pi^+,\bm\pi^-\in\Re^n}\inf_{\bm x\in\barx}L_s(\bm x, \bm\pi^+,\bm\pi^-;\hatx)$,
where
$$L_s(\bm x, \bm\pi^+,\bm\pi^-;\hatx): = \Q_s(\bm x)-\sum_{i\in[n]}\pi^+_i(x_i-\hat{x}_i)^+ - \sum_{i\in[n]}\pi^-_i(x_i-\hat{x}_i)^-.$$
\end{restatable}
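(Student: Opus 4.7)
The plan is to first show that the two sup-inf problems share the same optimal value, after which the conclusion follows by a simple sandwich argument. Throughout, let $v_S := \sup_{\bm\pi^+, \bm\pi^- \in \Re^n} \inf_{\bm x \in S} L_s(\bm x, \bm\pi^+, \bm\pi^-; \hatx)$ and $v_{\barx} := \sup_{\bm\pi^+, \bm\pi^- \in \Re^n} \inf_{\bm x \in \barx} L_s(\bm x, \bm\pi^+, \bm\pi^-; \hatx)$; Theorem~\ref{thm:gen_lag_dual} already gives $v_{\barx} = \Q_s(\hatx)$.

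The easy direction $v_S \leq v_{\barx}$ is immediate from set inclusion: because $\barx \subseteq S$, enlarging the feasible set of the inner minimization can only shrink the infimum, so $\inf_{\bm x \in S} L_s \leq \inf_{\bm x \in \barx} L_s$ pointwise in $(\bm\pi^+, \bm\pi^-)$, and the same bound may be read off by evaluating $L_s$ at $\bm x = \hatx \in S$, where the ReLU penalty terms vanish and $L_s = \Q_s(\hatx)$.

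The main step is to show $v_S \geq \Q_s(\hatx)$, which I would do by reusing the explicit optimal multipliers constructed in Theorem~\ref{thm:gen_lag_dual}. Setting $\bm\pi^+ = \bm\pi^- = -\bm{1}\rho$ turns $L_s(\bm x, -\bm{1}\rho, -\bm{1}\rho; \hatx)$ into $\Q_s(\bm x) + \rho \|\bm x - \hatx\|_1$, and the same extreme-point analysis from the proof of Theorem~\ref{thm:gen_lag_dual}, now applied to the bounded set $\conv\{(\bm x, \bm y, \omega) : \bm T^s \bm x + \bm W^s \bm y \geq \bm h^s,\, \bm x \in S,\, \bm y \in \Ze^{m_1} \times \Re^{m_2},\, \|\bm x - \hatx\|_1 \leq \omega\}$, together with Assumption~\ref{assumption:lb} ensuring $\Q_s \geq L$, yields a finite threshold $\rho^* = (\Q_s(\hatx) - L)/d_S$ beyond which the penalty term dominates the recourse at every non-$\hatx$ extreme point, forcing $\inf_{\bm x \in S} L_s \geq \Q_s(\hatx)$. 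This proves $v_S = v_{\barx} = \Q_s(\hatx)$.

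To finish, let $({\bm\pi^+}^*, {\bm\pi^-}^*)$ be any optimizer of the $S$-problem. Then set inclusion gives $\inf_{\bm x \in \barx} L_s(\bm x, {\bm\pi^+}^*, {\bm\pi^-}^*; \hatx) \geq \inf_{\bm x \in S} L_s(\bm x, {\bm\pi^+}^*, {\bm\pi^-}^*; \hatx) = v_S = v_{\barx}$, while the reverse inequality $\inf_{\bm x \in \barx} L_s \leq v_{\barx}$ holds by definition of the supremum; equality therefore yields the claimed optimality in the $\barx$-problem. The principal obstacle will be guaranteeing a positive minimum $\ell_1$-distance $d_S$ among the non-$\hatx$ extreme points of the enlarged convex hull above so that $\rho^*$ is finite; should $S$ relax the integrality of $\bm x$ and let extreme points accumulate at $\hatx$, one would need to supplement the bound by a limiting procedure exploiting the lower semicontinuity of the mixed-integer value function $\Q_s$.
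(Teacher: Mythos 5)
Your proposal is correct, and its load-bearing step is exactly the paper's: the sandwich
$\inf_{\bm x\in\barx}L_s(\bm x,{\bm\pi^+}^*,{\bm\pi^-}^*;\hatx)\geq\inf_{\bm x\in S}L_s(\bm x,{\bm\pi^+}^*,{\bm\pi^-}^*;\hatx)=\Q_s(\hatx)\geq\inf_{\bm x\in\barx}L_s(\bm x,{\bm\pi^+}^*,{\bm\pi^-}^*;\hatx)$,
with the last inequality obtained by evaluating at $\bm x=\hatx\in\barx$ (equivalently, by your appeal to the definition of the supremum together with $v_{\barx}=\Q_s(\hatx)$ from \Cref{thm:gen_lag_dual}). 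Where you genuinely diverge is in what comes before: the paper's proof asserts in its first sentence that optimality of $({\bm\pi^+}^*,{\bm\pi^-}^*)$ for the $S$-problem forces $\inf_{\bm x\in S}L_s(\bm x,{\bm\pi^+}^*,{\bm\pi^-}^*;\hatx)=\Q_s(\hatx)$, i.e.\ it takes for granted that the $S$-problem also has optimal value $\Q_s(\hatx)$, whereas you treat this as a claim requiring proof and attack it by rerunning the extreme-point construction of \Cref{thm:gen_lag_dual} over $S$. That extra step is not padding: weak duality (evaluation at $\hatx\in S$) only yields $v_S\leq\Q_s(\hatx)$, and the mere existence of an optimizer does not upgrade this to equality, so some version of your "main step" is needed for the corollary to be non-vacuous. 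Your closing caveat is also well placed — for a completely arbitrary bounded $S$ the lifted set need not be polyhedral and the minimum distance $d_S$ can degenerate to zero — but this is a limitation of the corollary as stated (and of the paper's one-line assertion) rather than a defect of your argument; in the paper's actual application in Section \ref{subsec:mixed_integer}, $S$ is a specifically chosen polyhedral relaxation for which the certificate $\inf_{\bm x\in S}L_s(\bm x,-\rho^*\bm 1,-\rho^*\bm 1;\hatx)\geq\Q_s(\hatx)$ is produced constructively, which supplies exactly the fact both proofs rely on.
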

\begin{proof}See Appendix \ref{pf:coro_exp_feas}\end{proof}

The following corollary provides an optimality condition for a dual optimal solution.
\begin{restatable}{corollary}{optcond}\label{coro:gen_lag_opt_cond}
For a given feasible first-stage decision $\hatx\in\barx$, $(\hat{\bm\pi}^+,\hat{\bm\pi}^-)$ is optimal to \eqref{gen_lag_dual_ori} if and only if 
\begin{equation}\label{gen_lag_opt_cond}
\Q_s(\hatx) \leq \Q_s(\bm x) - \sum_{i\in[n]}\hat{\pi}^+_i(x_i-\hat{x}_i)^+ - \sum_{i\in[n]}\hat{\pi}^-_i(x_i-\hat{x}_i)^-,
\end{equation}
for all $x\in\barx$, or equivalently, $$\theta \geq \Q_s(\bm x)\geq \Q_s(\hatx)+\sum_{i\in[n]}\hat{\pi}^+_i(x_i-\hat{x}_i)^+ + \sum_{i\in[n]}\hat{\pi}^-_i(x_i-\hat{x}_i)^-,$$ 
for all $(\bm x,\theta)$ in the epigraph of $\Q_s$.
\end{restatable}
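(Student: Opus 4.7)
The plan is to derive both directions of the equivalence directly from the strong duality result established in Theorem \ref{thm:gen_lag_dual}, which asserts $\underline{\Q}_s(\hatx) = \Q_s(\hatx)$. Throughout, I use the fact that at $\bm x = \hatx$ both ReLU terms vanish (since $(0)^+ = (0)^- = 0$), so $L_s(\hatx,\bm\pi^+,\bm\pi^-;\hatx) = \Q_s(\hatx)$ regardless of the multipliers; this means for any dual pair $(\bm\pi^+,\bm\pi^-)$, the inner infimum defining $\mathcal{L}_s$ is automatically bounded above by $\Q_s(\hatx)$.

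For the necessity direction, I would suppose that $(\hat{\bm\pi}^+,\hat{\bm\pi}^-)$ is optimal to \eqref{gen_lag_dual_ori}. Then by strong duality,
$$\mathcal{L}_s(\hat{\bm\pi}^+,\hat{\bm\pi}^-;\hatx)=\inf_{\bm x\in\barx}\left\{\Q_s(\bm x) - \sum_{i\in[n]}\hat\pi^+_i(x_i-\hat x_i)^+ - \sum_{i\in[n]}\hat\pi^-_i(x_i-\hat x_i)^-\right\}=\Q_s(\hatx).$$
Since the infimum over $\barx$ is at least $\Q_s(\hatx)$, every individual term in the infimum must also be at least $\Q_s(\hatx)$, which is precisely the inequality \eqref{gen_lag_opt_cond}.

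For the sufficiency direction, I would assume \eqref{gen_lag_opt_cond} holds for every $\bm x \in \barx$. Taking the infimum over $\bm x \in \barx$ on the right-hand side yields $\mathcal{L}_s(\hat{\bm\pi}^+,\hat{\bm\pi}^-;\hatx) \geq \Q_s(\hatx)$. The reverse inequality follows from substituting $\bm x = \hatx$ into the infimum, since both ReLU terms vanish. Hence $\mathcal{L}_s(\hat{\bm\pi}^+,\hat{\bm\pi}^-;\hatx) = \Q_s(\hatx) = \underline{\Q}_s(\hatx)$, where the last equality is strong duality from Theorem \ref{thm:gen_lag_dual}; this establishes optimality of $(\hat{\bm\pi}^+,\hat{\bm\pi}^-)$. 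Finally, the equivalent epigraphical reformulation is obtained by rearranging \eqref{gen_lag_opt_cond} into $\Q_s(\bm x) \geq \Q_s(\hatx) + \sum_i \hat\pi^+_i(x_i-\hat x_i)^+ + \sum_i \hat\pi^-_i(x_i-\hat x_i)^-$ and appending $\theta \geq \Q_s(\bm x)$ from the definition of the epigraph; conversely, specializing to $\theta = \Q_s(\bm x)$ recovers \eqref{gen_lag_opt_cond}. There is no real obstacle here: the content of the corollary is entirely carried by Theorem \ref{thm:gen_lag_dual}, and the only subtlety is to invoke the vanishing of the ReLU terms at $\bm x = \hatx$ to get the matching upper bound on $\mathcal{L}_s$.
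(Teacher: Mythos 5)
Your proof is correct and follows essentially the same route as the paper: both directions rest on the strong duality $\underline{\Q}_s(\hatx)=\Q_s(\hatx)$ from Theorem \ref{thm:gen_lag_dual}, with necessity obtained by noting that the infimum equaling $\Q_s(\hatx)$ forces every term to be at least $\Q_s(\hatx)$, and sufficiency by combining the pointwise bound with the trivial upper bound at $\bm x=\hatx$. Your write-up is if anything slightly more explicit than the paper's about why the ReLU terms vanishing at $\hatx$ supplies the matching upper bound on $\mathcal{L}_s$.
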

\begin{proof}
See Appendix \ref{pf:opt_cond}. \qed
\end{proof}
Now we are ready to formally define the ReLU Lagrangian cuts.
\begin{definition}[ReLU Lagrangian Cuts]
For a given feasible first-stage decision $\hatx\in\barx$, let $(\hat{\bm\pi}^+,\hat{\bm\pi}^-)$ be an optimal solution to the dual problem \eqref{gen_lag_dual_ori}. The following cut is valid for the epigraph of $\Q_s$:
\begin{equation}\label{gen_lag_cut}
\theta \geq \Q_s(\hatx)+\sum_{i\in[n]}\hat{\pi}^+_i(x_i-\hat{x}_i)^+ + \sum_{i\in[n]}\hat{\pi}^-_i(x_i-\hat{x}_i)^-.
\end{equation}
\end{definition}

When the first-stage decision is purely binary, that is, when $\barx=X$, we have $x_i-\hat{x}_i \geq 0$ when $\hat{x}_i=0$ and $x_i-\hat{x}_i \leq 0$ when $\hat{x}_i=1$. In this case, given a binary $\hatx\in X$, the ReLU Lagrangian cut simplifies to the following linear inequality:
\begin{equation}\label{bi_lin_form}
\theta \geq \Q_s(\hatx)+\sum_{i\notin I_{\hatx}}
\hat{\pi}^+_i(x_i-\hat{x}_i) - \sum_{i\in I_{\hatx}}\hat{\pi}^-_i(x_i-\hat{x}_i), 
\end{equation}
where $I_{\hatx} := \{i\in [n]:\hat{x}_i=1\}$.
However, when the first-stage decisions are mixed integers, cut \eqref{gen_lag_cut} is generally nonlinear. To incorporate this nonlinear cut effectively into the master problem when using the cutting plane method, we add the following constraints:
\begin{subequations}\label{lin_gen_lag}
\begin{align}
&\theta \geq \Q_s(\hatx) + \sum_{i\in[n]}\hat\pi_i^+\omega_i^+ + \sum_{i\in[n]}\hat\pi_i^-\omega_i^-,\label{mip_rep_cut}\\
&\omega_i^+ - \omega_i^- = x_i-\hat{x}_i,0\leq \omega_i^+\leq (B_i-\hat{x}_i)z_i,
0\leq \omega_i^-\leq \hat{x}_i(1-z_i),\forall i\in[n], \label{mip_rep_omega}\\
& \bm z\in\{0,1\}^n. \label{mip_rep_z}
\end{align}
\end{subequations}

The system of inequalities in \eqref{lin_gen_lag} is tight in the sense that its continuous relaxation recovers the convex hull of a relaxed local epigraphical set. To be specific, let us define $\widebar{B}=\times_{i\in[n]} [0,B_i]\bigcap\Ze^{n_1}\times\Re^{n_2}$ as a relaxed domain of the first-stage decision and a mixed integer set $S_1 = \left\{(\bm x,\theta)\in\widebar{B}\times\Re: \eqref{bi_lin_form}\right\}$ that consists of the relaxed domain set and a ReLU Lagrangian cut.
The following proposition summarizes this result.
\begin{restatable}{proposition}{lpconv}\label{prop:conv_gen_lshaped}
Given a first-stage decision $\hatx\in\barx$, we have 
\begin{equation}\label{s3}
\conv(S_1) = \left\{(\bm x,\theta)\in \times_{i\in[n]}[0, B_i]\times\Re: 
\exists (\bm\omega^+, \bm\omega^-,\bm z)\in\Re^n\times\Re^n\times[0,1]^n,
\eqref{mip_rep_cut},\eqref{mip_rep_omega}\right\}.
\end{equation}
\end{restatable}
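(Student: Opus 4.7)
The plan is to establish both inclusions $\conv(S_1) \subseteq T$ and $T \subseteq \conv(S_1)$, where $T$ denotes the set on the right-hand side of \eqref{s3}. For the easy direction, given any $(\bm x,\theta) \in S_1$, I would set $\omega_i^+ = (x_i - \hat{x}_i)^+$, $\omega_i^- = (x_i - \hat{x}_i)^-$, and $z_i = 1$ if $x_i \geq \hat{x}_i$ and $z_i = 0$ otherwise. A direct check shows these witnesses satisfy \eqref{mip_rep_omega}--\eqref{mip_rep_z} and reduce \eqref{mip_rep_cut} to the ReLU cut \eqref{gen_lag_cut}, so $S_1 \subseteq T$. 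Because $T$ is the projection onto $(\bm x,\theta)$-space of a polyhedron, it is convex, which gives $\conv(S_1) \subseteq T$.

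For the reverse inclusion $T \subseteq \conv(S_1)$, the plan is to take any $(\bm x,\theta) \in T$ with witnesses $(\bm\omega^+,\bm\omega^-,\bm z)$ and express it as a convex combination of points in $S_1$ using two passes of decomposition. The first pass handles fractional entries of $\bm z$: for each $j$ with $z_j \in (0,1)$, split the point as $(1-z_j)P^0 + z_j P^1$, where $P^0$ sets $z_j^0 = 0$, $\omega_j^{+,0} = 0$, $\omega_j^{-,0} = \omega_j^-/(1-z_j)$, $x_j^0 = \hat{x}_j - \omega_j^-/(1-z_j)$, and $P^1$ sets $z_j^1 = 1$, $\omega_j^{+,1} = \omega_j^+/z_j$, $\omega_j^{-,1} = 0$, $x_j^1 = \hat{x}_j + \omega_j^+/z_j$, with all remaining coordinates unchanged. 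A routine calculation shows that the bounds in \eqref{mip_rep_omega}--\eqref{mip_rep_z} hold at both $P^0$ and $P^1$; since the cut in \eqref{mip_rep_cut} is affine in $(\bm\omega^+,\bm\omega^-)$, its right-hand side at the original point equals the convex combination of the right-hand sides at $P^0, P^1$, so the slack in $\theta$ can be absorbed entirely into one piece, producing feasible $\theta^0, \theta^1$. Iterating over all fractional $z_j$ produces a convex decomposition in which every $z_i \in \{0,1\}$, which forces $\omega_i^+ = (x_i-\hat{x}_i)^+$ and $\omega_i^- = (x_i-\hat{x}_i)^-$, and the cut collapses to $\theta \geq \Q_s(\hatx) + \sum_i \phi_i(x_i)$ with $\phi_i(t) := \hat{\pi}_i^+(t-\hat{x}_i)^+ + \hat{\pi}_i^-(t-\hat{x}_i)^-$.

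The second pass handles fractional $x_i$ for $i \in [n_1]$ in each remaining piece. Since $\hat{x}_i$ is integer and the now-binary $z_i$ confines $x_i$ entirely to $[0,\hat{x}_i]$ or $[\hat{x}_i,B_i]$, both integer neighbors $\lfloor x_i \rfloor$ and $\lceil x_i \rceil$ lie on the same side of the kink of $\phi_i$ at $\hat{x}_i$, where $\phi_i$ is linear. Standard tensor-product rounding then writes the piece as a convex combination of up to $2^{|J|}$ integer-valued points $\bm x^{\bm b}$, with $J\subseteq [n_1]$ indexing fractional integer components and weights $\nu_{\bm b} = \prod_{i\in J}\mu_i^{1-b_i}(1-\mu_i)^{b_i}$ for $\mu_i := \lceil x_i\rceil - x_i$. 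Linearity of $\phi_i$ on each side of $\hat{x}_i$ yields $\sum_i \phi_i(x_i) = \sum_{\bm b}\nu_{\bm b}\sum_i\phi_i(x_i^{\bm b})$, so any residual slack in $\theta$ can again be loaded onto a single integer piece, and each final $(\bm x^{\bm b},\theta^{\bm b})$ has $\bm x^{\bm b}\in\widebar{B}$ and satisfies the ReLU cut, placing it in $S_1$. Hence $(\bm x,\theta) \in \conv(S_1)$.

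The main obstacle is arranging the $\theta$-splits in both passes so that every piece remains feasible for the cut. This ultimately reduces to two structural facts: the cut is affine in $(\bm\omega^+,\bm\omega^-)$ (enabling the first pass), and $\phi_i$ has its unique kink at the integer point $\hat{x}_i$ (enabling the second). With both in place, any excess slack in $\theta$ can be pushed into a single piece without violating \eqref{mip_rep_cut}.
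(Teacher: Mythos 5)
Your proof is correct, and it reaches the conclusion by a decomposition argument in the same spirit as the paper's, but the two differ in how the rounding is organized and in one step that you prove and the paper only asserts. For the forward inclusion the arguments are identical. For the reverse inclusion, the paper first \emph{observes} without proof that $\conv(S_1)$ is unchanged if the integrality of $x_i$, $i\in[n_1]$, is dropped while $\bm z$ is kept binary, and then rounds a fractional $\bm z$ in one shot: after sorting $1=z_0\ge z_1\ge\cdots\ge z_n$ it writes the point as a convex combination of the $n+1$ ``staircase'' binary vectors $\bm z^k$ with weights $\lambda_k=z_k-z_{k+1}$, rescaling $\bm\omega^+,\bm\omega^-$ exactly as you do and absorbing the slack in $\theta$ via the affinity of \eqref{mip_rep_cut}. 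Your first pass replaces this with a sequential split on one fractional coordinate of $\bm z$ at a time, which is logically equivalent but produces up to $2^n$ pieces instead of $n+1$; this costs nothing for a convex-hull statement, though the paper's staircase is the more economical certificate. Your second pass---rounding fractional integer components of $\bm x$ to neighboring integers and using that your $\phi_i$ is linear on each side of its unique kink at the integer point $\hat{x}_i$, with both neighbors confined to one side by the now-binary $z_i$---is precisely the justification of the paper's unproven opening observation, so on this point your write-up is more complete than the paper's. (Both arguments implicitly use that $B_i$ is integral for $i\in[n_1]$, so that $\lceil x_i\rceil\le B_i$; otherwise the right-hand side of \eqref{s3} would need $\lfloor B_i\rfloor$ in the integer coordinates.)
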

\begin{proof}
See Appendix \ref{pf:lp_conv}. \qed
\end{proof}

To obtain a dual optimal solution and derive a ReLU Lagrangian cut, the closed-form optimal dual solution \eqref{closed_form} in \Cref{thm:gen_lag_dual} suggests selecting a sufficiently large $\rho$. However, it also indicates that as $\rho$ approaches infinity, the resulting cut may become arbitrarily weak. In the following sections, we will show how to appropriately select $\rho$ by leveraging existing families of cuts that, while tight, may be weak, and how to strengthen these cuts. Additionally, \Cref{coro:expand_feas}  presents an alternative approach: rather than solving \eqref{gen_lag_dual_ori} directly, we may solve the dual problem with an expanded first-stage feasible region. To verify whether an inequality qualifies as a ReLU Lagrangian cut, \Cref{coro:gen_lag_opt_cond} provides a criterion: any inequality valid for the epigraph of a local recourse function in the form \eqref{gen_lag_cut} is a ReLU Lagrangian cut at $\hatx$.

\subsection{Related cut families}
In this subsection, we show that all tight Lagrangian cuts are special ReLU Lagrangian cuts. Therefore, similar to Lagrangian cuts, ReLU Lagrangian cuts are also sufficient to describe the local convex hull. Moreover, we show that certain nonlinear cuts--such as reverse norm cuts and augmented Lagrangian cuts \citep{ahmed2022stochastic}, as well as integer L-shaped cuts \citep{laporte1993integer}--are special cases of ReLU Lagrangian cuts. We extend the concept of L-shaped cuts to derive new cuts for purely integer first-stage decisions, termed ``$\Lambda$-shaped cuts." A comparison of ReLU Lagrangian cuts with existing cut families demonstrates their advantages and necessity in accurately describing both local convex hulls and local epigraphs. 

We define the set of admissible ReLU Lagrangian cut coefficients.
\begin{definition}[Admissible ReLU Lagrangian Cut Coefficients]\label{def_pihat}
For a given $\hatx\in\barx$, let set $\pixhat := \{(\bm\pi^+,\bm\pi^-)\in\Re^{2n}:(\bm\pi^+,\bm\pi^-)\text{ is optimal to \eqref{gen_lag_dual_ori}}\}$ denote all optimal solutions to the ReLU Lagrangian dual problem \eqref{gen_lag_dual_ori}. 
\end{definition}

Our first result shows that
\begin{restatable}{proposition}{lag}\label{prop:lag_relu}
Any tight Lagrangian cut is a ReLU Lagrangian cut.
\end{restatable}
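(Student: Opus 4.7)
The plan is to identify a tight Lagrangian cut with a ReLU Lagrangian cut by using the elementary decomposition $x_i - \hat{x}_i = (x_i - \hat{x}_i)^+ - (x_i - \hat{x}_i)^-$, and then to verify the resulting cut coefficients satisfy the ReLU optimality condition from \Cref{coro:gen_lag_opt_cond}.

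First, I would take an arbitrary tight Lagrangian cut at $\hatx \in \barx$, which by definition has the form
\begin{equation*}
\theta \geq \Q_s(\hatx) + \hat{\bm\pi}^\top(\bm x - \hatx),
\end{equation*}
with $\hat{\bm\pi}$ optimal to \eqref{lag_prob}. Writing $x_i - \hat{x}_i = (x_i - \hat{x}_i)^+ - (x_i - \hat{x}_i)^-$ componentwise, I can rewrite the right-hand side as
\begin{equation*}
\Q_s(\hatx) + \sum_{i\in[n]}\hat\pi_i(x_i-\hat{x}_i)^+ + \sum_{i\in[n]}(-\hat\pi_i)(x_i-\hat{x}_i)^-,
\end{equation*}
which has the functional form of a ReLU Lagrangian cut \eqref{gen_lag_cut} with the candidate coefficients $\hat\pi_i^+ := \hat\pi_i$ and $\hat\pi_i^- := -\hat\pi_i$.

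Next I would verify that $(\hat{\bm\pi}^+, \hat{\bm\pi}^-) = (\hat{\bm\pi}, -\hat{\bm\pi}) \in \pixhat$ by applying the ``if'' direction of \Cref{coro:gen_lag_opt_cond}. Concretely, I need to check that
\begin{equation*}
\Q_s(\hatx) \leq \Q_s(\bm x) - \sum_{i\in[n]}\hat\pi_i(x_i-\hat{x}_i)^+ + \sum_{i\in[n]}\hat\pi_i(x_i-\hat{x}_i)^-, \quad \forall \bm x \in \barx.
\end{equation*}
Collapsing the two ReLU sums back via $(x_i - \hat{x}_i)^+ - (x_i - \hat{x}_i)^- = x_i - \hat{x}_i$, this inequality is identical to
\begin{equation*}
\Q_s(\bm x) \geq \Q_s(\hatx) + \hat{\bm\pi}^\top(\bm x - \hatx), \quad \forall \bm x \in \barx,
\end{equation*}
which is precisely the statement that the tight Lagrangian cut is valid for $\epi_{\barx}(\Q_s)$ and passes through $(\hatx, \Q_s(\hatx))$; this follows directly from tightness together with \Cref{cor_1_prop1} (so that $\mathcal{L}_s(\hat{\bm\pi}; \hatx) = \Q_s(\hatx)$).

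The only subtlety worth flagging is ensuring that the ``optimal'' in ``tight Lagrangian cut'' really delivers both validity and attainment at $\hatx$; this is handled by \Cref{coro:tight_lag}, which characterizes tightness as the existence of a supporting hyperplane of $\conv(\epi_{\barx}(\Q_s))$ through $(\hatx, \Q_s(\hatx))$. Beyond this, the proof is essentially bookkeeping with the ReLU identity, so I expect no substantive obstacle.
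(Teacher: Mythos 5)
Your proof is correct and follows essentially the same route as the paper's: rewrite the tight Lagrangian cut via the identity $x_i-\hat{x}_i=(x_i-\hat{x}_i)^+-(x_i-\hat{x}_i)^-$, set $\hat\pi_i^+=\hat\pi_i$ and $\hat\pi_i^-=-\hat\pi_i$, and invoke \Cref{coro:gen_lag_opt_cond}. The extra bookkeeping you do to confirm the optimality condition collapses back to tightness of the original cut is just a more explicit version of the paper's one-line argument.
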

\begin{proof}
See Appendix \ref{pf:lag}. \qed
\end{proof}
As shown in \Cref{coro}, the local convex hull can be recovered using Lagrangian cuts that are tight at its extreme points. By this proposition, these cuts are also ReLU Lagrangian cuts. 
\begin{corollary}
For a given $\hatx\in\barx$, the following result must hold:
$\{(\bm x,\theta)\in\conv(\barx)\times\Re:
\theta \geq \Q_s(\hatx)-\sum_{i\in[n]}\pi^+_i(x_i-\hat{x}_i)^+ - \sum_{i\in[n]}\pi^-_i(x_i-\hat{x}_i)^-,\forall \hatx\in\barx, (\bm\pi^+,\bm\pi^-)\in\pixhat\}\subseteq\conv(\epi(\Q_s))$.
\end{corollary}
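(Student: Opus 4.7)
The plan is to reduce this containment to the characterization of the local convex hull via tight Lagrangian cuts (Corollary \ref{coro}) combined with Proposition \ref{prop:lag_relu}, which identifies every tight Lagrangian cut as a ReLU Lagrangian cut. Since the right-hand side is already known to be describable by linear tight Lagrangian cuts, and every such cut re-expresses as a ReLU Lagrangian cut with a specific choice of $(\bm\pi^+,\bm\pi^-)$, intersecting over the larger ReLU family can only yield a smaller set.

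First, I would establish the algebraic correspondence. For any $\hatx \in \barx$ and any $\bm\pi \in \Re^n$, the identity $(x_i-\hat{x}_i)^+ - (x_i-\hat{x}_i)^- = x_i - \hat{x}_i$ implies that the linear inequality $\theta \geq \Q_s(\hatx) + \bm\pi^\top(\bm x - \hatx)$ coincides with the ReLU-form inequality obtained by setting $\bm\pi^+ = \bm\pi$ and $\bm\pi^- = -\bm\pi$ (up to the sign bookkeeping already used in \eqref{bi_lin_form} and \eqref{gen_lag_cut}). When $\bm\pi \in \dxhat$, the Lagrangian cut is tight, so by Proposition \ref{prop:lag_relu} the associated pair lies in $\pixhat$. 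Hence every tight Lagrangian cut at $\hatx$ appears as one of the ReLU Lagrangian cuts being intersected on the left-hand side.

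Next, I would invoke Corollary \ref{coro}, which expresses
\begin{equation*}
\conv(\epi_{\barx}(\Q_s)) = \bigl\{(\bm x,\theta)\in\conv(\barx)\times\Re : \theta \geq \Q_s(\bm x^k) + \bm\pi^\top(\bm x - \bm x^k),\ \forall\, k\in K,\ \bm\pi\in\mathcal{D}_{\bm x^k}^s\bigr\},
\end{equation*}
where each extreme point $\bm x^k$ lies in $\barx$. By the preceding step, each of these tight Lagrangian cuts is among the ReLU Lagrangian cuts ranging over $\hatx\in\barx$ and $(\bm\pi^+,\bm\pi^-)\in\pixhat$, so the family defining the left-hand side is a superset of the family defining $\conv(\epi_{\barx}(\Q_s))$. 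Enlarging the family of imposed inequalities shrinks the feasible set, which yields the desired inclusion.

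The main obstacle, and really the only delicate point, is verifying the translation of sign conventions between the linear coefficient $\bm\pi \in \dxhat$ and the ReLU pair $(\bm\pi^+,\bm\pi^-) \in \pixhat$, and confirming that the resulting pair is admissible in the sense of Definition \ref{def_pihat}. Both are already furnished by Proposition \ref{prop:lag_relu}, so the remainder is a direct ``more constraints give a smaller set'' argument, and no additional hypotheses beyond Assumptions \ref{assumption:rel_comp_rec}--\ref{assumption:xbounded} are needed.
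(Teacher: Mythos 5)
Your proposal is correct and follows essentially the same route as the paper, which justifies this corollary in one line by combining Corollary \ref{coro} (the local convex hull is cut out by tight Lagrangian cuts at extreme points) with Proposition \ref{prop:lag_relu} (every tight Lagrangian cut is a ReLU Lagrangian cut), so that the left-hand side intersects a larger family of valid inequalities. Your explicit verification of the sign translation $\bm\pi^+=\bm\pi$, $\bm\pi^-=-\bm\pi$ matches the appendix proof of Proposition \ref{prop:lag_relu} and fills in the only detail the paper leaves implicit.
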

This local convex hull description property demonstrates the strength of the ReLU Lagrangian cuts, distinguishing them from two other cuts: reverse norm cuts and the integer L-shaped cuts.
\begin{definition}[Reverse norm cuts, \citealt{ahmed2022stochastic}]
If the local recourse function $\Q_s$ is Lipschitz continuous with Lipschitz constant $\rho$ under the $L_1$-norm, i.e., 
$|\Q_s(\bm x)-\Q_s(\bm x^\prime)|\leq \rho||\bm x-\bm x^\prime||_1$,
for all $\bm x,\bm x^\prime \in \barx$,
then given $\bm \hatx\in\barx$, we can derive a cut:
\begin{equation*}
\theta\geq\Q_s(\hatx)-\rho||\bm x-\hatx||_1.
\end{equation*}
\end{definition}
\begin{proposition}
    Reverse norm cuts are ReLU Lagrangian cuts.
\end{proposition}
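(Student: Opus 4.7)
The plan is to rewrite the reverse norm cut in the same functional form as a ReLU Lagrangian cut and then invoke the optimality criterion from Corollary \ref{coro:gen_lag_opt_cond} to certify that the corresponding multipliers are dual-optimal. The key observation is that the $L_1$-norm decomposes cleanly into positive and negative parts:
\begin{equation*}
\|\bm x - \hatx\|_1 \;=\; \sum_{i\in[n]}|x_i-\hat{x}_i| \;=\; \sum_{i\in[n]}(x_i-\hat{x}_i)^+ \;+\; \sum_{i\in[n]}(x_i-\hat{x}_i)^-.
\end{equation*}

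First, I would substitute this identity into the reverse norm cut to obtain
\begin{equation*}
\theta \;\geq\; \Q_s(\hatx) \;+\; \sum_{i\in[n]}(-\rho)(x_i-\hat{x}_i)^+ \;+\; \sum_{i\in[n]}(-\rho)(x_i-\hat{x}_i)^-,
\end{equation*}
which matches the template \eqref{gen_lag_cut} with the candidate multipliers $\hat{\bm\pi}^+ = -\rho\bm{1}$ and $\hat{\bm\pi}^- = -\rho\bm{1}$. It remains only to verify that this candidate pair is indeed optimal to the ReLU Lagrangian dual \eqref{gen_lag_dual_ori}.

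Next, I would invoke Corollary \ref{coro:gen_lag_opt_cond}, which says that $(\hat{\bm\pi}^+,\hat{\bm\pi}^-)$ is optimal to \eqref{gen_lag_dual_ori} if and only if inequality \eqref{gen_lag_opt_cond} holds for every $\bm x\in\barx$. Substituting the candidate multipliers, the condition to check is
\begin{equation*}
\Q_s(\hatx) \;\leq\; \Q_s(\bm x) \;+\; \rho\sum_{i\in[n]}(x_i-\hat{x}_i)^+ \;+\; \rho\sum_{i\in[n]}(x_i-\hat{x}_i)^-,
\end{equation*}
i.e., $\Q_s(\hatx) - \Q_s(\bm x) \leq \rho\|\bm x - \hatx\|_1$ for all $\bm x\in\barx$. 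But this follows directly from the assumed $L_1$-Lipschitz continuity of $\Q_s$ with constant $\rho$, since $|\Q_s(\hatx)-\Q_s(\bm x)|\leq \rho\|\bm x-\hatx\|_1$ in particular implies the required one-sided bound.

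There is essentially no obstacle here; the result is a one-line verification once the $L_1$-norm is split into its positive and negative parts and Corollary \ref{coro:gen_lag_opt_cond} is applied. The mildly subtle point is conceptual rather than technical: the reverse norm cut uses a single global constant $\rho$ shared across all coordinates and signs, so it corresponds to the very particular multiplier choice $\hat{\bm\pi}^+=\hat{\bm\pi}^-=-\rho\bm{1}$, which is generally suboptimal within the full ReLU Lagrangian dual and hence typically not facet-defining—this already foreshadows the comparison with the sharper cut families to be discussed later.
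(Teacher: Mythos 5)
Your proof is correct and matches the paper's (implicit) argument: the paper states this proposition without a separate proof, but the identical reasoning appears explicitly for the more general augmented Lagrangian cuts, where the cut is rewritten via the decomposition $\|\bm x-\hatx\|_1=\sum_i(x_i-\hat{x}_i)^++\sum_i(x_i-\hat{x}_i)^-$ and Corollary \ref{coro:gen_lag_opt_cond} is invoked; the reverse norm cut is the special case $\bm\pi=\bm 0$, $\hat{\bm\pi}^+=\hat{\bm\pi}^-=-\rho\bm{1}$, with the Lipschitz assumption supplying the required inequality exactly as you argue.
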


Although reverse norm cuts can be applied to general mixed integer first-stage decisions, they are restricted by the requirement that local recourse functions must be Lipschitz continuous--a condition that is often difficult to satisfy when part of second-stage decision variables is discrete. The following integer L-shaped cuts can be derived using only the current recourse function value and a lower bound of the recourse function. It is applicable when the first-stage variables are binary.
\begin{definition}[Integer L-shaped cuts, \citealt{laporte1993integer}]
Let $L$ be a valid lower bound of the local recourse function $\Q_s$ defined in
\eqref{loc_rec}. Given a feasible first-stage decision $\hat{\bm x}\in X$, 
an integer L-shaped cut generated at $\hat{\bm x}$ admits the form
\begin{equation*}
\theta \geq \left(\Q_s(\hat{\bm x})-L\right)\left(\sum_{i\in \ix}x_i - \sum_{i\notin \ix}x_i\right)-\left(\Q_s(\hat{\bm x})-L\right)(|I|-1)+L,
\end{equation*}
where $\ix:=\{i\in[n]: \hat{x}_i=1\}$.
\end{definition}
We can equivalently rewrite this cut as
$\theta \geq \Q_s(\hatx) - (\Q_s(\hatx)-L)||\bm x-\hatx||_1$. Note that for the optimal dual solution in the form of \eqref{closed_form}, the distance $d$ using $L_1$-norm is at least one for integer first-stage decisions. This coincides with the coefficients of L-shaped cuts. Similarly, when first-stage decisions are purely integers, we generalize the idea of L-shaped cuts to derive the $\Lambda$-shaped cuts.
\begin{definition}[$\Lambda$-shaped cuts]
When the first-stage feasible region $\barx\subseteq\Ze^n$, given $\hatx\in \barx$ and a lower bound $L$ of the recourse function, we can derive a cut 
\begin{equation}\label{gen_lshaped}
\theta \geq \Q_s(\hatx) - (\Q_s(\hatx)-L)||\bm x-\hatx||_1.
\end{equation}
\end{definition}
\begin{restatable}{proposition}{lamrelu}\label{prop_L_shape_ReLU}
        L-shaped cuts and $\Lambda$-shaped cuts are ReLU Lagrangian cuts.
\end{restatable}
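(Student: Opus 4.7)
The plan is to exhibit explicit dual multipliers $(\hat{\bm\pi}^+,\hat{\bm\pi}^-)$ for the ReLU Lagrangian dual \eqref{gen_lag_dual_ori} that reproduce the L-shaped and $\Lambda$-shaped cuts, and then verify their optimality. The starting observation is that both cut families can be written in the unified form
\[
\theta \geq \Q_s(\hatx) - \big(\Q_s(\hatx)-L\big)\,\|\bm x - \hatx\|_1,
\]
since the integer L-shaped cut in the binary case is exactly this inequality (as already noted in the text preceding the $\Lambda$-shaped definition). Using the identity $\|\bm x - \hatx\|_1 = \sum_{i\in[n]}\big[(x_i-\hat x_i)^+ + (x_i-\hat x_i)^-\big]$, this inequality matches the ReLU Lagrangian cut template \eqref{gen_lag_cut} precisely when we take $\hat\pi_i^+ = \hat\pi_i^- = -(\Q_s(\hatx)-L)$ for all $i\in[n]$, i.e.\ $(\hat{\bm\pi}^+,\hat{\bm\pi}^-) = (-\bm 1\rho^*, -\bm 1\rho^*)$ with $\rho^* = \Q_s(\hatx)-L$.

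Next, I would invoke \Cref{coro:gen_lag_opt_cond}: to show that this pair $(\hat{\bm\pi}^+,\hat{\bm\pi}^-)$ is optimal to \eqref{gen_lag_dual_ori}, it is both necessary and sufficient to verify that the inequality
\[
\Q_s(\bm x) \geq \Q_s(\hatx) - \big(\Q_s(\hatx)-L\big)\,\|\bm x-\hatx\|_1
\]
holds for every $\bm x\in\barx$. I would split into two cases. If $\bm x = \hatx$, both sides equal $\Q_s(\hatx)$. If $\bm x \neq \hatx$, then because $\barx\subseteq\{0,1\}^n$ in the L-shaped setting and $\barx\subseteq\Ze^n$ in the $\Lambda$-shaped setting, we have $\|\bm x-\hatx\|_1\geq 1$, so the right-hand side is at most $\Q_s(\hatx)-(\Q_s(\hatx)-L)=L$, which is dominated by $\Q_s(\bm x)$ via Assumption \ref{assumption:lb}. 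This validity check completes the optimality verification through \Cref{coro:gen_lag_opt_cond}.

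As an alternative route, one can instead directly apply \Cref{thm:gen_lag_dual}: in both the binary and pure integer cases, the quantity $d$ from that theorem satisfies $d\geq 1$ because the extreme points $\bm x^k$ of the relevant convex hull lie in $\barx\subseteq\Ze^n$, so $\bm x^k\neq\hatx$ forces $\|\bm x^k-\hatx\|_1\geq 1$. Hence $\rho^*=\Q_s(\hatx)-L\geq(\Q_s(\hatx)-L)/d$, which is exactly the sufficient condition \eqref{closed_form} for $(-\bm 1\rho^*,-\bm 1\rho^*)$ to be optimal. Either route yields the conclusion that the L-shaped and $\Lambda$-shaped cuts arise as instances of the ReLU Lagrangian cut \eqref{gen_lag_cut} with these admissible coefficients.

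I do not anticipate a real obstacle here, since the identification of the correct multipliers is immediate from matching coefficients, and the validity check reduces to the trivial observation that the $L_1$ distance between distinct integer vectors is at least one. The only delicate point is being explicit about which case ($\barx\subseteq\{0,1\}^n$ versus $\barx\subseteq\Ze^n$) justifies $\|\bm x-\hatx\|_1\geq 1$, and noting that the same $\rho^*$ works for both families.
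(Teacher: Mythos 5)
Your proposal is correct and matches the paper's justification: the paper gives no separate formal proof of this proposition, relying instead on the remark preceding the $\Lambda$-shaped cut definition that for integer first-stage decisions the distance $d$ in \Cref{thm:gen_lag_dual} is at least one, so $\rho^*=\Q_s(\hatx)-L$ satisfies \eqref{closed_form} and $(-\bm 1\rho^*,-\bm 1\rho^*)$ is an optimal dual pair reproducing the cut --- exactly your ``alternative route.'' Your primary route via \Cref{coro:gen_lag_opt_cond} is an equivalent one-line verification of the same fact and is equally valid.
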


The cuts discussed above either cannot be applied to general SMIPs or impose specific requirements on local recourse functions. Due to the symmetry of the $\ell_1$-norm, these cuts fail to recover local convex hulls, resulting in weaker cuts that may not effectively enhance the outer approximation of the expected epigraph during the solution procedure. The following example demonstrates the
insufficiency of reverse norm cuts and $\Lambda$-shaped cuts in describing the local convex hull.
\begin{example}
Given set $\barx=\{(0,0)^\top, (0,1)^\top,(2,1)^\top,(0,3)^\top,(2,3)^\top,  (1,4)^\top,(1,2)^\top\}$, consider a local recourse function given by $\Q_s(0,0)=0, \Q_s(0,1)=\Q_s(2,1)=1, \Q_s(0,3)=\Q_s(2,3)=3,  \Q_s(1,4)=4$ and $\Q_s(1,2)=-10$. It is clear that the Lipschitz constant $\rho\geq \frac{3}{10}$. The strongest reverse norm cut we can derive is $\theta \geq \Q_s(\hatx) - \frac{3}{10}||\bm x-\hatx||_1$ for all $\hatx\in \barx$.

Similarly, to derive $\Lambda$-shaped cuts, we note that $L=-10$ is the best lower bound of the local recourse function $\Q_s$. Then, we can derive cuts $\theta \geq \Q_s(\hatx) - (\Q_s(\hatx)+10)||\bm x-\hatx||_1$ for all $\hatx\in\barx$.

It is easy to check that $x_1=1.5, x_2=0.5,\theta= 0$ is valid for all reverse norm cuts and $\Lambda$-shaped cuts. However, it is not in $\conv(\epi (\Q_s))$ since $\mathrm{co}(\Q_s) (1.5, 0.5) = 0.5$.
\qedA
\end{example}

The following family of cuts subsumes all previously discussed types and provides a stronger outer approximation of local convex hulls, which is also a special case of ReLU Lagrangian cut.
\begin{definition}[Augmented Lagrangian cut \citealt{ahmed2022stochastic}]
Given $\hatx\in\barx$, $\bm\pi\in\Re^n$, $\rho\geq 0$, let 
$L_A^s(\bm\pi, \rho; \hatx) = \min_{\bm x}\left\{\Q_s(\bm x)+\bm\pi^\top(\hatx-\bm x)+\rho||\hatx-\bm x||_1: \bm x\in\barx\right\}.$
The following augmented Lagrangian cut is valid for the epigraph of $\Q_s$:
\begin{equation*}
\theta \geq L_A^s(\bm\pi, \rho; \hatx) + \bm\pi^\top(\bm x-\hatx) - \rho ||\bm x-\hatx||_1.
\end{equation*}
\end{definition}
Given a tight augmented Lagrangian cut
\begin{equation}\label{tight_alc}
\theta \geq \Q_s(\hatx) + \bm\pi^\top(\bm x-\hatx) - \rho ||\bm x-\hatx||_1,
\end{equation}
($\bm\pi=\bm{0}$ when it is a reverse norm cut), we can equivalently rewrite it as
\begin{equation}\label{alc_to_ReLU}
\theta \geq \Q_s(\hatx) + \sum_{i\in[n]} (\pi_i-\rho)(x_i-\hat{x}_i)^+ + \sum_{i\in[n]} (-\pi_i-\rho)(x_i-\hat{x}_i)^-.
\end{equation}
Then, according to \Cref{coro:gen_lag_opt_cond}, we have
\begin{restatable}{proposition}{aug}\label{prop:aug_relu}
  Tight augmented Lagrangian cuts are ReLU Lagrangian cuts. 
\end{restatable}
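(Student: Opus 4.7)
The plan is to reduce the claim to a direct verification of the optimality criterion in \Cref{coro:gen_lag_opt_cond}. The paper already provides the key algebraic rewriting in \eqref{alc_to_ReLU}, so my task splits into two simple steps: (i) confirm this rewriting rigorously, and (ii) check that the resulting coefficients $\hat{\pi}_i^+ = \pi_i - \rho$ and $\hat{\pi}_i^- = -\pi_i - \rho$ satisfy the optimality condition \eqref{gen_lag_opt_cond}.

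For step (i), I would use the elementary identities $x_i - \hat{x}_i = (x_i - \hat{x}_i)^+ - (x_i - \hat{x}_i)^-$ and $|x_i - \hat{x}_i| = (x_i - \hat{x}_i)^+ + (x_i - \hat{x}_i)^-$, which hold for every $i \in [n]$. Substituting these into the tight augmented Lagrangian cut \eqref{tight_alc} gives
\begin{equation*}
\bm\pi^\top(\bm x - \hatx) - \rho\|\bm x - \hatx\|_1 = \sum_{i\in[n]}(\pi_i - \rho)(x_i - \hat{x}_i)^+ + \sum_{i\in[n]}(-\pi_i - \rho)(x_i - \hat{x}_i)^-,
\end{equation*}
which is exactly \eqref{alc_to_ReLU}.

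For step (ii), since the tight augmented Lagrangian cut is valid for the epigraph of $\Q_s$, for every $(\bm x, \theta) \in \epi_{\barx}(\Q_s)$ we have
\begin{equation*}
\theta \geq \Q_s(\hatx) + \sum_{i\in[n]}\hat{\pi}_i^+(x_i - \hat{x}_i)^+ + \sum_{i\in[n]}\hat{\pi}_i^-(x_i - \hat{x}_i)^-,
\end{equation*}
with $\hat{\pi}_i^+ := \pi_i - \rho$ and $\hat{\pi}_i^- := -\pi_i - \rho$. This is precisely the valid-inequality form displayed in \Cref{coro:gen_lag_opt_cond}, so by the ``if'' direction of that corollary, $(\hat{\bm\pi}^+, \hat{\bm\pi}^-) \in \pixhat$ is optimal to the ReLU Lagrangian dual \eqref{gen_lag_dual_ori}. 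Therefore the cut \eqref{tight_alc}, rewritten as \eqref{alc_to_ReLU}, is a ReLU Lagrangian cut by definition.

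There is essentially no obstacle here; the proof is a one-line rewriting plus an invocation of \Cref{coro:gen_lag_opt_cond}. The only point worth stating carefully is how tightness enters: tightness guarantees that the constant term on the right-hand side of \eqref{alc_to_ReLU} is exactly $\Q_s(\hatx)$ (rather than some smaller value $L_A^s(\bm\pi,\rho;\hatx)$ that can arise from a generic, non-tight augmented Lagrangian cut), which is what is needed to match the form \eqref{gen_lag_cut} required by the definition of a ReLU Lagrangian cut and by the optimality condition \eqref{gen_lag_opt_cond}.
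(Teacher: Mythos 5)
Your proof is correct and follows essentially the same route as the paper, which likewise rewrites the tight augmented Lagrangian cut \eqref{tight_alc} into the form \eqref{alc_to_ReLU} via the identities $x_i-\hat{x}_i=(x_i-\hat{x}_i)^+-(x_i-\hat{x}_i)^-$ and $|x_i-\hat{x}_i|=(x_i-\hat{x}_i)^++(x_i-\hat{x}_i)^-$ and then invokes \Cref{coro:gen_lag_opt_cond}. Your explicit remark that tightness is what forces the intercept to be $\Q_s(\hatx)$ rather than $L_A^s(\bm\pi,\rho;\hatx)$ is the one point the paper leaves implicit, and it is stated correctly.
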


Augmented Lagrangian cuts are sufficient to describe local convex hulls, as they reduce to ordinary Lagrangian cuts when penalty terms are omitted. However, ReLU Lagrangian cuts generally provide stronger approximations of local epigraphs, allowing for fewer cuts in the solution procedure. This phenomenon is illustrated in the following example.
\begin{example}\label{alc_vs_ReLU}
Consider a local recourse function given by 
$\Q_s(0,1)=3, \Q_s(0,2)=2, \Q_s(0,3)=1, 
\Q_s(1,0)=5, \Q_s(1,1)=\frac{15}{2}, \Q_s(1,2)=10, \Q_s(1,3)=\frac{11}{2},\Q_s(1,4)=1, \Q_s(2,1)=5, \Q_s(2,2)=4, \Q_s(2,3)=3$. At $\hatx = (1,2)^\top$, we can derive a ReLU Lagrangian cut
\begin{equation}\label{lifted_facet_defining}
\theta \geq 10-6(x-1)^+-8(x-1)^--\frac{9}{2}(x-2)^+ -\frac{5}{2}(x-2)^-.    
\end{equation}
As shown in Figure \ref{fig:eg3_relu}, this cut is tight at points $(0,2,\Q_s(0,2))^\top$, $(1,0,\Q_s(1,0))^\top$,   $(1,1,\Q_s(1,1))^\top$,   $(1,2,\Q_s(1,2))^\top$,  $(1,3,\Q_s(1,3))^\top$,  $(1,4,\Q_s(1,4))^\top$ and $(2,2,\Q_s(2,2))$ of the epigraph. To describe the epigraph, we need to add one more cut $\theta \geq 3+(x_1-0)^+ -(x_2-1)^+ + (x_2-1)^-$ generated at $\hatx = (0,1)^\top$.

To describe the local epigraph with augmented Lagrangian cuts, one can check by enumeration that at least three cuts are required. At $\bm x = (1,2)^\top$, cut \eqref{lifted_facet_defining} cannot be expressed as an augmented Lagrangian cut since the system of linear equations $\pi_1 + \rho=-6, -\pi_1+\rho = -8, \pi_2 + \rho=-\frac{9}{2}, -\pi_2+\rho = -\frac{5}{2}$ has no solution. In fact, at least two augmented Lagrangian cuts; for example, the following two cuts 
\begin{subequations}
\begin{equation}
\label{ext_agc_1}
    \theta \geq 10+(x_1-1)+\frac{5}{2}(x_2-2) - 7||\bm x-\hatx||_1,
\end{equation}
\begin{equation}
\label{ext_agc_2}
\theta \geq 10+(x_1-1)-\frac{9}{2}(x_2-2) - 7||\bm x-\hatx||_1,
\end{equation}
\end{subequations}
are required 
to separate $(1,0,\Q_s(1,0))^\top (1,1,\Q_s(1,1))^\top, (1,2,\Q_s(1,2))^\top, (1,3,\Q_s(1,3))^\top$ and $(1,4,\Q_s(1,4))^\top$ from the epigraph, as shown in Figure \ref{fig:eg3_aug}. 
\qedA
\end{example}
\begin{figure}[htbp]
  \vspace{-10pt}
  \centering
\subfigure[One ReLU Lagrangian cut]{\label{fig:eg3_relu}
\begin{tikzpicture}
    \begin{axis}[
        view={70}{45}, 
        xlabel={$x_1$},
        ylabel={$x_2$},
        zlabel={$\Q_s(\bm x)$},
        xtick={0,1,2},
        ytick={0,1,2,3,4},
        ztick={0,2,4,6,8,10},
        ymax = 6,
        axis lines=middle, 
        axis line style={-stealth, gray}, 
        ticks=none,
        xlabel style={anchor=north,gray},
        ylabel style={anchor=west,gray},
        zlabel style={anchor=south,gray},
        height=8cm,
        width=6cm,  
    ]

    \addplot3[only marks, mark=*, black] coordinates {
        (0,1,3) (0,2,2) (0,3,1) 
        (1,0,5) (1,1,7.5) (1,2,10) (1,3,5.5) (1,4,1) 
        (2,1,5) (2,2,4) (2,3,3) 
    };
    \addplot3[thick,black]coordinates{(1,2,10) (1,0,5) (1, -2, 0)};
\addplot3[dashed, thick,black]coordinates{(1,2,10) (0,2,2) (-1/4,2,0)};
\addplot3[thick,black]coordinates{(1,2,10) (1,4,1) (1,38/9,0)};
\addplot3[thick,black]coordinates{(1,2,10) (2,2,4) (8/3,2,0)};
        \addplot3[fill, faceted color=gray, opacity=0.2] coordinates {
   (1,38/9,0) (1,2,10) (-1/4,2,0)
};
    \addplot3[fill, faceted color=gray, opacity=0.2] coordinates {
   (1,38/9,0) (1,2,10) (8/3,2,0)
};
    \addplot3[fill, faceted color=gray, opacity=0.2] coordinates {
   (8/3,2,0) (1,2,10) (1,0,5) (1, -2, 0)
};
        \addplot3[fill, faceted color=gray, opacity=0.2] coordinates {
 (1, -2, 0) (1,2,10) (-1/4,2,0)
};
\node at (axis cs:1,2,10) [anchor=west] {\tiny Cut \eqref{lifted_facet_defining}};
\end{axis}
\end{tikzpicture}
}
\hspace{5cm}
\subfigure[Two augmented Lagrangian cuts]{\label{fig:eg3_aug}
\begin{tikzpicture}
 [scale=1.00]
    \begin{axis}[
        view={70}{40}, 
        xlabel={$x_1$},
        ylabel={$x_2$},
        zlabel={$\Q_s(\bm x)$},
        xtick={0,1,2},
        ytick={0,1,2,3,4},
        ztick={0,2,4,6,8,10},
        ymax=7,
        axis lines=middle, 
        axis line style={-stealth, gray}, 
        ticks=none,
        xlabel style={anchor=north,gray},
        ylabel style={anchor=west,gray},
        zlabel style={anchor=south,gray},
        height=8cm,
        width=6cm, 
    ]
    
    \addplot3[only marks, mark=*, black] coordinates {
        (0,1,3) (0,2,2) (0,3,1) 
        (1,0,5) (1,1,7.5) (1,2,10) (1,3,5.5) (1,4,1) 
        (2,1,5) (2,2,4) (2,3,3) 
     };   
     \addplot3[dashed,thick,red]coordinates{(1,2,10) (1,4/3,11/3) (1,18/19,0)};
\addplot3[dashed, red, thick]coordinates{(1,2,10) (0,2,2) (-1/4,2,0)};
\addplot3[thick, red]
coordinates{(1,2,10) (1,4,1) (1,38/9,0)};
\addplot3[red, thick]coordinates{(1,2,10) (2,2,4) (8/3,2,0)};

\addplot3[fill=red, faceted color=red, opacity=0.2] coordinates{
(1,38/9,0) (1,2,10) (-1/4, 2, 0)
};
\addplot3[fill=red, faceted color=red, opacity=0.2] coordinates{
(8/3,2,0) (1,2,10) (1,38/9,0)
};
\addplot3[fill=red, faceted color=red, opacity=0.2] coordinates{
(8/3,2,0) (1,2,10) (1,18/19,0)
};
\addplot3[fill=red, faceted color=red, opacity=0.2] coordinates{
(-1/4, 2, 0) (1,2,10) (1,18/19,0)
};
\addplot3[thick,blue, opacity=0]coordinates{(1,2,10) (1,0,5) (1, -2, 0) };
\addplot3[dashed,thick,blue, opacity=0]coordinates{(1,2,10) (0,2,2) (-1/4,2,0) };
\addplot3[thick,blue, opacity=0]coordinates{(1,2,10) (2,2,4) (8/3,2,0)};
\addplot3[dashed, thick,blue, opacity=0]coordinates{(1,2,10) (1,8/3,7/3)  (1,66/23,0)};
\addplot3[dotted, thick, blue, opacity=0] coordinates {(1,0,5) (0,2,2) (1,8/3,7/3)(2,2,4) (1,0,5)}; 
\addplot3[fill=blue, opacity=0] coordinates{
(-1/4,2,0) (1,2,10) (1, -2, 0)
};
\addplot3[fill=blue, opacity=0] coordinates{
(-1/4,2,0) (1,2,10) (1,66/23,0)
};
\addplot3[fill=blue, opacity=0] coordinates{
(8/3,2,0) (1,2,10) (1,66/23,0)
};
\addplot3[fill=blue, opacity=0] coordinates{
(1, -2, 0) (1,2,10) (8/3,2,0)
};
\addplot3[thick,blue]coordinates{(1,2,10) (1,0,5) (1, -2, 0) };
\addplot3[dashed,thick,blue]coordinates{(1,2,10) (0,2,2) (-1/4,2,0) };
\addplot3[thick,blue]coordinates{(1,2,10) (2,2,4) (8/3,2,0)};
\addplot3[dashed, thick,blue]coordinates{(1,2,10) (1,8/3,7/3)  (1,66/23,0)};
\addplot3[fill=blue, opacity=0.2] coordinates{
(-1/4,2,0) (1,2,10) (1, -2, 0)
};
\addplot3[fill=blue, opacity=0.2] coordinates{
(-1/4,2,0) (1,2,10) (1,66/23,0)
};
\addplot3[fill=blue, opacity=0.2] coordinates{
(8/3,2,0) (1,2,10) (1,66/23,0)
};
\addplot3[fill=blue, opacity=0.2] coordinates{
(1, -2, 0) (1,2,10) (8/3,2,0)
};
\node at (axis cs:1,1.7,8) [anchor=south east] {\color{blue}\tiny Cut \eqref{ext_agc_1}};
\node at (axis cs:1,2.4,7) [anchor=west] {\color{red}\tiny Cut \eqref{ext_agc_2}};
\end{axis}
\end{tikzpicture}
}
\caption{\centering The illustration of Example \ref{alc_vs_ReLU}.}
  \vspace{-10pt}
\end{figure}
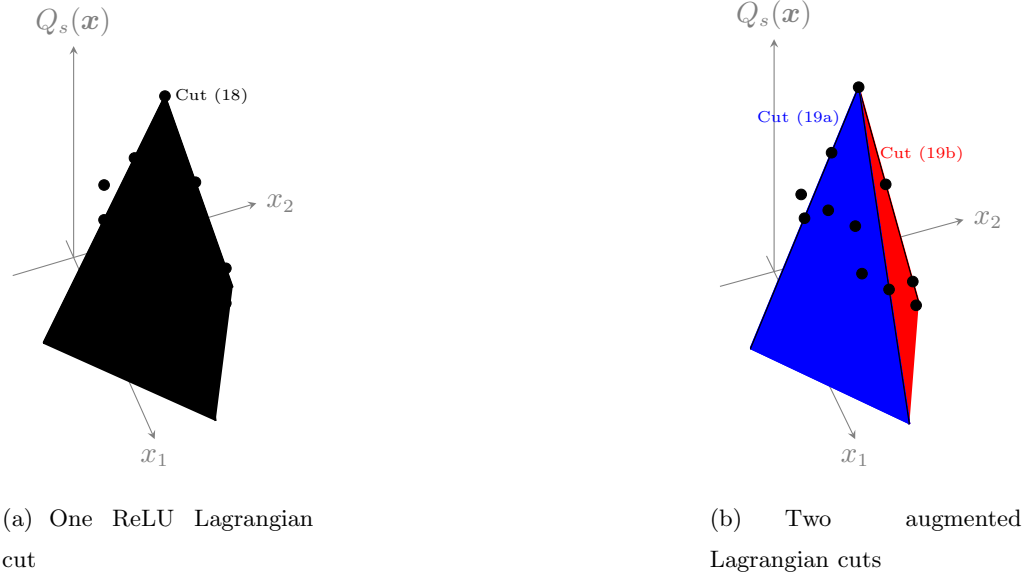

We will further explain in Section \ref{subsec:mixed_integer} that the cut \eqref{lifted_facet_defining} is the strongest possible ReLU Lagrangian cut generated at $\hatx=(1,2)^\top$ by analyzing the structure of the set $\pixhat$. Intuitively, from \eqref{alc_to_ReLU}, a ReLU Lagrangian cut reduces to an augmented Lagrangian cut if and only if $\pi_i^+ = \pi_i - \rho, \pi_i^- = -\pi_i - \rho$ for all $i\in[n]$ and $\rho\geq 0$, which is equivalent to say $\pi_i^+ +\pi_i^- = \pi_j^+ +\pi_j^-\leq 0$ for all $i,j\in [n]$. This requirement restricts our choice of cut coefficients in $\pixhat$.

It is also worth noting that although both augmented Lagrangian cuts and ReLU Lagrangian cuts can describe the local convex hulls, they fail to describe the convex hull of the epigraph of the expected recourse function, as demonstrated in the following example.
\begin{example}\label{example_conv_hull_ReLU}
Let $X=\{0,1,2,3\}$ and define the local recourse functions as $\Q_1(x)=0$ if $x\in\{0,3\}$, $\Q_1(x)=1$ if $x\in\{1,2\}$, and $\Q_2(x)=4$ if $x\in\{0,3\}$, $\Q_2(x)=1$ if $x\in\{1,2\}$.
The expected recourse function is then given by $\mathcal{Q}(x)=2$ if $x\in\{0,3\}$, and $\mathcal{Q}(x)=1$ if $x\in\{1,2\}$.
For scenario 1, the strongest ReLU Lagrangian cut we can derive are $\theta \geq 0$ at $x=0$ and $x=3$, $\theta\geq 1-(x-1)^- -\frac{1}{2}(x-1)^+$ at $x=1$, and $\theta\geq 1-\frac{1}{2}(x-2)^--(x-2)^+$ at $x=2$.
For scenario 2, the strongest ReLU Lagrangian cuts we can derive are $\theta \geq 4-3(x-0)^+$ at $x=0$, $\theta \geq 1+3(x-1)^-$ at $x=1$, $\theta \geq 1+3(x-2)^+$ at $x=2$, and $\theta \geq 4-3(x-3)^-$ at $x=3$. 
Combining the two scenarios, we have $\theta \geq 2-\frac{3}{2}(x-0)^+$ at $x=0$, $\theta\geq 1+(x-1)^--\frac{1}{4}(x-1)^+$ at $x=1$, $\theta\geq 1-\frac{1}{4}(x-2)^-+(x-2)^+$ at $x=2$, and $\theta\geq 2-\frac{3}{2}(x-3)^-$ at $x=3$.
In Figure \ref{fig:eg4}, the black dots represent the local and expected recourse functions, while the dash lines represent ReLU Lagrangian cuts. The gray areas depict the convex hulls of the epigraphs, and the shadowed areas are outer approximations derived by ReLU Lagrangian cuts. We can observe that for the expected recourse function, the point $(\frac{3}{2},\frac{7}{8})^\top$ is contained in the outer approximation shaped by ReLU Lagrangian cuts, while it is not in the convex hull of the epigraph.\qedA
\end{example}
 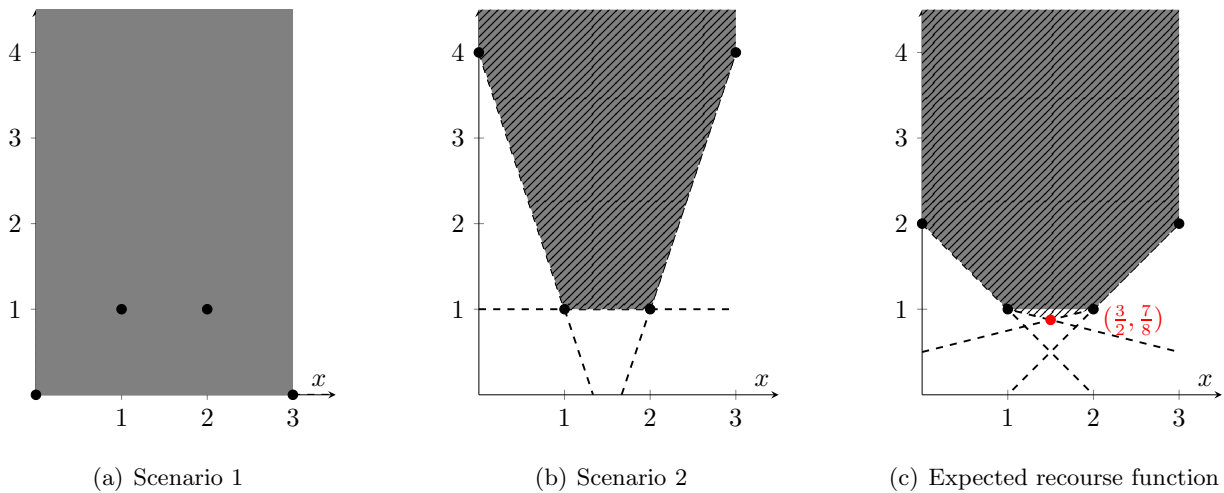
\begin{figure}[htbp]
 \vspace{-10pt}
\subfigure[Scenario 1]{\label{fig:eg4_s1}
       \begin{tikzpicture}[scale=0.9]
    \begin{axis}[
        axis lines=middle,       
        xlabel={$x$},                 
        xtick={0,1,2,3},            
        ytick={0,1,2,3,4},         
        ymin=0, ymax=4.5,        
        xmin=0, xmax=3.5,     
        axis line style={-stealth},
        scatter/classes={
            a={mark=*,black}     
        },
        axis equal image,
    ]
    \addplot[only marks, mark=*, black] coordinates {(0,0) (3,0) (1,1) (2,1)};

    \addplot[dashed, thick] coordinates {(0,0) (3.5,0)};

    \addplot[dashed,thick] coordinates {(0,0) (1,1) (3,0)};
    \addplot[dashed,thick] coordinates {(0,0) (2,1) (3,0)};
     
      \addplot [
        pattern=north east lines,
        draw=none,
    ] coordinates {(0,4.5) (0,0) (1,1) (1.5,3/4) (2,1) (3,0) (3,4.5)} --cycle;

     \addplot [
        fill=gray,
        fill opacity=0.3, 
        draw=none,
    ] coordinates {(0,0) (3,0)  (3,4.5) (0,4.5)} --cycle;
    
    \end{axis}
\end{tikzpicture}
}
    \hfill
\subfigure[Scenario 2]{\label{fig:eg4_s2}
       \begin{tikzpicture}[scale=0.9]
    \begin{axis}[
        axis lines=middle,       
        xlabel={$x$},                 
        xtick={0,1,2,3},            
        ytick={0,1,2,3,4},         
        ymin=0, ymax=4.5,        
        xmin=0, xmax=3.5,     
        axis line style={-stealth},
        scatter/classes={
            a={mark=*,black}     
        },
        axis equal image,
    ]
    \addplot[only marks, mark=*, black] coordinates {(0,4) (3,4) (1,1) (2,1)};

    \addplot[dashed, thick] coordinates {(0,4) (4/3,0)};

    \addplot[dashed,thick] coordinates {(0,4) (1,1) (3,1)};
   
    \addplot[dashed,thick] coordinates {(0,1) (2,1) (3,4)};
    \addplot[dashed,thick] coordinates {(5/3,0) (3,4)};
     
      \addplot [
        fill=gray,
        fill opacity=0.3, 
        draw=none,
    ] coordinates {(0,4) (1,1) (2,1) (3,4) (3,4.5) (0,4.5) (0,4)} --cycle;

     \addplot [
        pattern=north east lines,
        draw=none,
    ] coordinates {(0,4) (1,1) (2,1) (3,4) (3,4.5) (0,4.5) (0,4)} --cycle;
    
    \end{axis}
\end{tikzpicture}
}
   \hfill
\subfigure[Expected recourse function]{\label{fig:eg4_exp}
       \begin{tikzpicture}[scale=0.9]
    \begin{axis}[
        axis lines=middle,       
        xlabel={$x$},                 
        xtick={0,1,2,3},            
        ytick={0,1,2,3,4},         
        ymin=0, ymax=4.5,        
        xmin=0, xmax=3.5,     
        axis line style={-stealth},
        scatter/classes={
            a={mark=*,black}     
        },
        axis equal image,
    ]
    \addplot[only marks, mark=*, black] coordinates {(0,2) (3,2) (1,1) (2,1)};

    \addplot[dashed, thick] coordinates {(0,2) (2,0)};

    \addplot[dashed,thick] coordinates {(3,2) (1,0)};
   
    \addplot[dashed,thick] coordinates {(0,2) (1,1) (3,1/2)};
    \addplot[dashed,thick] coordinates{(0,1/2) (2,1) (3,2)};
     
      \addplot [
        fill=gray,
        fill opacity=0.3, 
        draw=none,
    ] coordinates {(0,2) (1,1)  (2,1) (3,2) (3,4.5) (0,4.5) (0,2)} --cycle;

 \addplot [
        pattern=north east lines,
        draw=none,
    ] coordinates {(0,2) (1,1) (1.5,7/8) (2,1) (3,2) (3,4.5) (0,4.5) (0,2)} --cycle;

     \addplot[only marks, mark=*, red] coordinates {(3/2,7/8)};

     \node at (axis cs:2,7/8) [anchor=west] {\color{red}$(\frac{3}{2},\frac{7}{8})$};
    
    \end{axis}
\end{tikzpicture}
}
    \caption{\centering The illustration of Example \ref{example_conv_hull_ReLU}.}\label{fig:eg4}
     \vspace{-12pt}
\end{figure}

This example also demonstrates that by adding the ReLU Lagrangian cuts, the outer approximation eventually converges to the epigraph of the expected recourse function when enforcing the integrality of first-stage decisions. That is, we have the following equalities:
$$v^* = \min \{\bm{c}^\top \bm{x} + \theta: (\bm{x}, \theta) \in \epi_{\barx}(\mathcal{Q})\} = \min_{\bm{x}} \{\bm{c}^\top \bm{x} + \theta: \bm{x} \in \barx, (\bm{x}, \theta) \in E_R\},$$
where set $E_R = \{(\bm{x}, \theta) \in \mathbb{R}^n \times \mathbb{R}: \theta \geq \mathcal{Q}(\hatx)-\sum_{i \in [N]} \pi^+_i (x_i - \hat{x}_i)^+ - \sum_{i \in [n]} \pi^-_i (x_i - \hat{x}_i)^-, \forall \hatx \in \barx, (\bm{\pi}^+, \bm{\pi}^-) \in \sum_{s \in [N]} p_s\pixhat\}$. This is different from Lagrangian cuts, whose corresponding outer approximation only provides a lower bound for the optimal value. 

We also observe that the integrality constraints of the first-stage problem in Example \ref{example_conv_hull_ReLU} are essential for solving the original SMIP to optimality using ReLU Lagrangian cuts. This is because replacing the first-stage feasible region with its convex hull and adding all generated ReLU Lagrangian cuts is insufficient to recover the convex hull of the epigraph. Specifically, since
$\conv\{(\bm{x}, \theta) \in \conv(\barx) \times \mathbb{R}: (\bm{x}, \theta) \in E_R\} \supseteq \conv(\text{epi}_{\barx}(\mathcal{Q}))$, 
we must have
$$\begin{aligned}
v^* & = \min_{\bm{x}} \{\bm{c}^\top \bm{x} + \theta: \bm{x} \in \barx, (\bm{x}, \theta) \in E_R\} \geq \min_{\bm{x}} \{\bm{c}^\top \bm{x} + \theta: \bm{x} \in \conv(\barx), (\bm{x}, \theta) \in E_R\} \\
&\geq \min_{\bm{x}} \{\bm{c}^\top \bm{x} + \theta: \bm{x} \in \barx^{LP}, (\bm{x}, \theta) \in E_R\}.
\end{aligned}
$$
Therefore, when implementing these cuts, maintaining the integrality constraints in the first stage ensures that we achieve the optimal value $v^*$.

Finally, we note that there is a more straightforward way to obtain a ReLU Lagrangian cut in practice than by directly solving the Lagrangian dual \eqref{gen_lag_dual_ori}. This observation motivates our strategy of initially generating a valid cut, albeit potentially weak, at a low computational cost and subsequently refining it into a stronger cut. The methodology and implementation details will be elaborated upon in the following two sections.

\section{Purely Binary First-stage Decisions}\label{sec_binary}

Existing literature generates Lagrangian cuts by solving a sequence of optimization problems (see, e.g., \citealt{zou2019stochastic,chen2022generating}), which can be computationally demanding. In this section, we propose a simple and effective method for generating ReLU Lagrangian cuts when the first-stage decision variables are purely binary.

\subsection{Dual optimal solution set $\pixhat$}
We first analyze the structure of $\pixhat$, the set of all optimal solutions to the Lagrangian dual \eqref{gen_lag_dual_ori}.
\begin{restatable}{proposition}{poly}
\label{prop:dual_poly}
The set $\pixhat$  is a polyhedron.
\end{restatable}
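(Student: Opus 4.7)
The plan is to realize $\pixhat$ as a finite intersection of polyhedra by partitioning the first-stage feasible region around $\hatx$, on each piece of which the ReLU terms in the optimality condition linearize. By \Cref{coro:gen_lag_opt_cond}, $\pixhat$ consists of all $(\bm\pi^+,\bm\pi^-) \in \Re^{2n}$ satisfying
\[
\Q_s(\hatx) \leq \Q_s(\bm x) - \sum_{i\in[n]} \pi_i^+(x_i - \hat{x}_i)^+ - \sum_{i\in[n]} \pi_i^-(x_i - \hat{x}_i)^-, \quad \forall \bm x \in \barx,
\]
which is a priori an infinite family of constraints linear in $(\bm\pi^+,\bm\pi^-)$; the task is to reduce it to finitely many.

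For each sign pattern $\sigma \in \{+,-\}^n$ I would define $\barx_\sigma := \{\bm x \in \barx : \sigma_i(x_i - \hat{x}_i) \geq 0, \forall i \in [n]\}$, so that $\bigcup_\sigma \barx_\sigma = \barx$ and on each $\barx_\sigma$ the ReLU terms become linear. Substituting, the restricted constraint is equivalent to
\[
\theta \geq \Q_s(\hatx) + \sum_{i\in[n]} \mu^\sigma_i(x_i - \hat{x}_i), \quad \forall (\bm x,\theta) \in \epi_{\barx_\sigma}(\Q_s),
\]
where $\mu^\sigma_i := \pi^+_i$ if $\sigma_i = +$ and $\mu^\sigma_i := -\pi^-_i$ if $\sigma_i = -$. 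Hence admissibility in orthant $\sigma$ reduces to requiring that the affine inequality with slope $\bm\mu^\sigma$ through $(\hatx,\Q_s(\hatx))$ is valid for $\epi_{\barx_\sigma}(\Q_s)$.

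Next, applying the fundamental theorem of integer programming (invoked via Assumption \ref{assumption:rel_comp_rec} and the rational-data hypothesis of \eqref{loc_rec}) to the joint $(\bm x,\bm y,\theta)$ representation of $\epi_{\barx_\sigma}(\Q_s)$, together with boundedness from Assumption \ref{assumption:xbounded}, I would conclude that $\conv(\epi_{\barx_\sigma}(\Q_s))$ is a polyhedron with finitely many extreme points and a single recession ray $(\bm 0,1)$. Validity at each extreme point imposes finitely many linear constraints on $\bm\mu^\sigma$, cutting out a polyhedron $\mathcal{D}^s_{\hatx}(\sigma) \subseteq \Re^n$. Since the map $(\bm\pi^+,\bm\pi^-) \mapsto \bm\mu^\sigma$ is linear, its preimage of $\mathcal{D}^s_{\hatx}(\sigma)$ is a polyhedron in $\Re^{2n}$, and
\[
\pixhat = \bigcap_{\sigma \in \{+,-\}^n} \left\{(\bm\pi^+,\bm\pi^-) \in \Re^{2n} : \bm\mu^\sigma(\bm\pi^+,\bm\pi^-) \in \mathcal{D}^s_{\hatx}(\sigma)\right\}
\]
is a finite intersection of $2^n$ polyhedra, hence polyhedral. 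The main delicate step I anticipate is justifying the polyhedrality of $\conv(\epi_{\barx_\sigma}(\Q_s))$ cleanly from the fundamental theorem (since $\barx_\sigma$ inherits rational linear constraints from $\barx$ plus the sign constraints $\sigma_i(x_i-\hat{x}_i)\geq 0$); the orthant decomposition itself is transparent because the ReLU functions break exactly on the hyperplanes $\{x_i = \hat{x}_i\}$.
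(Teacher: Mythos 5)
Your proof is correct, and it reaches the same finite-constraint reduction as the paper by a different, though closely related, route. The paper likewise starts from \Cref{coro:gen_lag_opt_cond}, but instead of splitting $\barx$ into the $2^n$ orthant pieces $\barx_\sigma$, it linearizes the ReLU terms once and for all by lifting to the variables $(\bm\omega^+,\bm\omega^-,\bm z)$ of \eqref{mip_rep_omega}--\eqref{mip_rep_z} together with the second-stage variables $\bm y$: for fixed $(\bm\pi^+,\bm\pi^-)$ the inner minimization becomes a single mixed-integer linear program whose value is attained at one of the finitely many extreme points $\{(\bm x^k,{\bm\omega^+}^k,{\bm\omega^-}^k,\bm z^k,\bm y^k)\}_{k\in K}$ of the convex hull of the lifted feasible set, yielding one linear inequality in $(\bm\pi^+,\bm\pi^-)$ per extreme point. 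The two arguments are essentially dual presentations of the same disjunction: the binary vector $\bm z$ in the extended formulation encodes exactly your sign pattern $\sigma$, so the paper's single convex hull aggregates your $2^n$ sets $\conv(\epi_{\barx_\sigma}(\Q_s))$. Your version handles the ReLU structure transparently, without big-$M$ constraints, and applies the fundamental theorem of integer programming to each ordinary mixed-integer set $\barx_\sigma$ separately; the paper's version is more compact (one extreme-point enumeration rather than $2^n$) and produces the defining inequalities directly in the original variables $(\bm\pi^+,\bm\pi^-)$ without the intermediate linear map to $\bm\mu^\sigma$. The delicate point you flag---polyhedrality of the relevant convex hulls when $\hatx$ has irrational continuous components, so that the right-hand sides $\pm\hat{x}_i$ are irrational---is present in identical form in the paper's proof (the bounds in \eqref{mip_rep_omega} involve $\hat{x}_i$ and $B_i-\hat{x}_i$) and is left implicit there as well, so it is not a gap relative to the paper; it is resolved by the observation that the constraint matrices remain rational and only the right-hand sides are perturbed, under which the fundamental theorem of integer programming still delivers a polyhedral convex hull.
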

\begin{proof}
See Appendix \ref{pf:poly}. \qed
\end{proof}
This result holds for general mixed integer first-stage decisions. However, when the first-stage decision variables are purely binary, the set $\pixhat$ is a non-pointed polyhedron. Intuitively, we observe that for any $(\bm \pi^+,\bm\pi^-)\in\pixhat$, a ReLU Lagrangian cut \eqref{gen_lag_cut} is equivalent to a linear cut \eqref{bi_lin_form}. This linear cut is uniquely determined by the entries $\pi_i^+$ for $i\notin \ix$ and $\pi_i^-$ for $i\in \ix$, while the remaining entries in the ReLU Lagrangian cut can take arbitrary values.
\begin{restatable}{lemma}{linsp}\label{lemma:boundary}
When the first-stage feasible decisions are purely binary, the recession cone of $\pixhat$ contains $\linspace\{(\bm e_i, \bm{0})\}_{i\in \ix}+\linspace\{(\bm{0}, \bm e_i)\}_{i\notin \ix}$.
\end{restatable}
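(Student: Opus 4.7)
The plan is to exploit the fact that, on a binary feasible set, each ReLU term $(x_i-\hat{x}_i)^+$ or $(x_i-\hat{x}_i)^-$ vanishes identically in exactly one of the two cases $i\in I_{\hatx}$ or $i\notin I_{\hatx}$, so the corresponding dual coordinate does not affect the Lagrangian function at any feasible $\bm x$.

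First I would note that $\barx=X\subseteq\{0,1\}^n$, so for every $\bm x\in\barx$ and every $i\in[n]$, $(x_i-\hat{x}_i)^+=0$ whenever $\hat{x}_i=1$ (i.e., $i\in I_{\hatx}$), while $(x_i-\hat{x}_i)^-=0$ whenever $\hat{x}_i=0$ (i.e., $i\notin I_{\hatx}$). Consequently, the inner Lagrangian function
\[
L_s(\bm x,\bm\pi^+,\bm\pi^-;\hatx)=\Q_s(\bm x)-\sum_{i\notin I_{\hatx}}\pi_i^+(x_i-\hat{x}_i)-\sum_{i\in I_{\hatx}}\pi_i^-(\hat{x}_i-x_i)
\]
for every $\bm x\in\barx$ depends on $(\bm\pi^+,\bm\pi^-)$ only through the coordinates $\{\pi_i^+:i\notin I_{\hatx}\}$ and $\{\pi_i^-:i\in I_{\hatx}\}$.

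Next, I would pick any $(\bm\pi^+,\bm\pi^-)\in\pixhat$ and any direction $(\bm d^+,\bm d^-)$ lying in $\linspace\{(\bm e_i,\bm 0)\}_{i\in I_{\hatx}}+\linspace\{(\bm 0,\bm e_i)\}_{i\notin I_{\hatx}}$, equivalently, $d_i^+=0$ for $i\notin I_{\hatx}$ and $d_i^-=0$ for $i\in I_{\hatx}$. By the observation above, for every scalar $t\in\Re$ and every $\bm x\in\barx$,
\[
L_s(\bm x,\bm\pi^++t\bm d^+,\bm\pi^-+t\bm d^-;\hatx)=L_s(\bm x,\bm\pi^+,\bm\pi^-;\hatx).
\]
Taking the infimum over $\bm x\in\barx$ yields $\mathcal{L}_s(\bm\pi^++t\bm d^+,\bm\pi^-+t\bm d^-;\hatx)=\mathcal{L}_s(\bm\pi^+,\bm\pi^-;\hatx)=\Q_s(\hatx)$, where the last equality uses optimality of $(\bm\pi^+,\bm\pi^-)$ together with the strong duality from \Cref{thm:gen_lag_dual}. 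Hence $(\bm\pi^++t\bm d^+,\bm\pi^-+t\bm d^-)\in\pixhat$ for every $t\in\Re$, and in particular for every $t\geq 0$, which certifies that $(\bm d^+,\bm d^-)$ belongs to the recession cone of $\pixhat$.

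There is essentially no obstacle here beyond correctly invoking strong duality to identify the optimal value with $\Q_s(\hatx)$; the argument reduces to observing which ReLU pieces collapse under binarity. If anything, the only subtlety is that I am establishing containment of a full linear subspace (both signs of $t$), which is consistent with $\pixhat$ being a non-pointed polyhedron as claimed in the text after \Cref{prop:dual_poly}. \qed
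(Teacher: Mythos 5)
Your proof is correct and rests on exactly the same observation as the paper's: for binary $\bm x$ and $\hatx$, the terms $(x_i-\hat{x}_i)^+$ vanish for $i\in I_{\hatx}$ and $(x_i-\hat{x}_i)^-$ vanish for $i\notin I_{\hatx}$, so the corresponding dual coordinates are free. The paper phrases this by checking the recession-cone inequalities in the polyhedral description of $\pixhat$ from \Cref{prop:dual_poly}, while you phrase it as invariance of the Lagrangian function along those directions; the two are interchangeable.
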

\begin{proof}
See Appendix \ref{pf:linsp}. \qed
\end{proof}

According to this lemma, we focus on the restriction of $\pixhat$ to the orthogonal complement of the linear subspace $\linspace\{(\bm e_i, \bm{0})\}_{i\in \ix}+\linspace\{(\bm{0}, \bm e_i)\}_{i\notin \ix}$. Projecting out the entries that are fixed to be zero, we consider the set
\begin{align*}
&\mathrm{Proj}_{([n]\backslash \ix, \ix)}\pixhat\left|_{\linspace\{(\bm e_i, \bm{0})\}_{i\in \ix}+\linspace\{(\bm{0}, \bm e_i)\}_{i\notin \ix}}\right.\\
 :=&\left\{\bm\pi\in\Re^n: \exists (\bm\pi^+, \bm\pi^-)\in\pixhat: \pi_i = \pi_i^+, \forall i\notin \ix, \pi_i = \pi_i^-,\forall i\in \ix\right\} 
 = \diag(\bm{\chi}) \dxhat,
\end{align*}
where we let $\chi_i = 2\hat{x}_i-1$ for each $i\in [n]$ and $\diag(\bm{\chi})$ is the diagonalization of the vector $\bm{\chi}$. Note that the diagonal matrix $\diag(\bm{\chi})$ is symmetric and orthonormal. In \Cref{thm:epi_description}, we define $\dxhat$ as the set of all optimal solutions to the Lagrangian dual \eqref{lag_prob} for ordinary Lagrangian cuts. Thus, ReLU Lagrangian cuts coincide with ordinary Lagrangian cuts for purely binary variables. The orthonormal transformation $\diag(\bm{\chi})$ preserves the polyhedral structure-- for example, the extreme points and the extreme rays-- of a polyhedron. Although Lagrangian cuts are sufficient to characterize the epigraphs of local recourse functions, many alternative Lagrangian cuts exist, since the set $\dxhat$ may not be a singleton. Our goal is to identify the strongest cuts that can reconstruct local convex hulls. This motivates us to seek facet-defining cuts of the local convex hulls.
\begin{definition}
A cut of the form $\theta \geq v + \bm \pi^\top \bm x$ is facet-defining for $\conv(\epi_{\barx}(\Q^s))$ if it is valid for $\conv(\epi_{\barx}(\Q^s))$ and the corresponding hyperplane $\theta = v + \bm \pi^\top \bm x$ defines a facet of $\conv(\epi_{\barx}(\Q^s))$.
\end{definition}
The following theorem demonstrates that the optimal solutions to the outer supremum of \eqref{lag_prob} form a polyhedron, with facet-defining cuts corresponding to its extreme points. 
\begin{restatable}{theorem}{polar}\label{polar}
The following properties hold for the set $\dxhat$ of dual optimal solutions and Lagrangian cuts generated at $\hatx\in \barx$: (i) The set $\dxhat$ is a polyhedron; (ii) A Lagrangian cut $\theta \geq \co(\Q_s)(\hatx) + \bm \pi^\top (\bm x-\hatx)$ is facet-defining if and only if $\bm \pi$ is an extreme point of $\dxhat$; (iii) The recession cone of $\dxhat$ is the normal cone of $\conv(\barx)$ at $\hatx$, denoted by $N_{\conv(\barx)}(\hatx)$; and (iv) The local convex hull can be described using the facet-defining Lagrangian cuts:
$$\conv(\epi_{\barx}(\Q_s)) = \left\{(\bm x,\theta)\in\conv(\barx)\times\Re: \theta \geq \co(\Q_s)(\hatx) + \bm \pi^\top (\bm x-\hatx), \forall \hatx\in \barx, \bm{\pi}\in\ext(\dxhat)\right\}.$$
\end{restatable}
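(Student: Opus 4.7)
My starting point is to use Corollary \ref{coro}, which gives the finite decomposition $\conv(\epi_{\barx}(\Q_s)) = \conv\{(\bm{x}^k, \Q_s(\bm{x}^k))\}_{k \in K} + \cone\{(\bm{0},1)\}$ with $|K| < \infty$. Since the recession ray $(\bm{0},1)$ carries the Lagrangian functional $\theta + \bm{\pi}^\top(\hatx - \bm{x})$ to $+\infty$, the minimum in $\mathcal{L}_s(\bm{\pi}; \hatx)$ is attained at some extreme point. Combined with \Cref{cor_1_prop1}, this yields the finite description
\[
\dxhat = \bigl\{\bm{\pi} \in \Re^n : \Q_s(\bm{x}^k) + \bm{\pi}^\top(\hatx - \bm{x}^k) \geq \co(\Q_s)(\hatx),\ \forall k \in K\bigr\},
\]
a finite intersection of half-spaces. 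This gives part (i) at once and supplies the workhorse inequalities for the remaining parts.

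For part (ii), I would study the face $F(\bm{\pi}) := \{(\bm{x},\theta) \in \conv(\epi_{\barx}(\Q_s)) : \theta = \co(\Q_s)(\hatx) + \bm{\pi}^\top(\bm{x} - \hatx)\}$ exposed by the cut; since $(\hatx, \co(\Q_s)(\hatx)) \in F(\bm{\pi})$ whenever $\bm{\pi} \in \dxhat$, this face is nonempty. For the direction ``extreme $\Rightarrow$ facet,'' I argue by contrapositive: if $\dim F(\bm{\pi}) \leq n-1$, pick $\bm{v} \neq \bm{0}$ with $\bm{v}^\top(\bm{x} - \hatx) = 0$ on $\Proj_{\bm{x}}(F(\bm{\pi}))$; for each extreme point $(\bm{x}^k, \Q_s(\bm{x}^k)) \notin F(\bm{\pi})$ the slack $\delta_k := \Q_s(\bm{x}^k) - \co(\Q_s)(\hatx) - \bm{\pi}^\top(\bm{x}^k - \hatx)$ is strictly positive, so choosing $\epsilon > 0$ uniformly below $\delta_k / |\bm{v}^\top(\bm{x}^k - \hatx)|$ over the finite set $K$ keeps $\bm{\pi} \pm \epsilon \bm{v} \in \dxhat$ by the description above, contradicting extremeness. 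For ``facet $\Rightarrow$ extreme,'' writing $\bm{\pi} = \tfrac{1}{2}(\bm{\pi}_1 + \bm{\pi}_2)$ with $\bm{\pi}_1, \bm{\pi}_2 \in \dxhat$ and summing the two valid cut inequalities on $F(\bm{\pi})$ forces equality in each; hence both $\bm{\pi}_1, \bm{\pi}_2$ agree with $\bm{\pi}$ on an $n$-dimensional set of $\bm{x}$-values, pinning $\bm{\pi}_1 = \bm{\pi}_2 = \bm{\pi}$.

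Part (iii) will be a direct computation: $\bm{d} \in \mathrm{rec}(\dxhat)$ iff $\bm{\pi} + t \bm{d}$ satisfies the defining inequalities of (i) for all $t \geq 0$, which by letting $t \to \infty$ reduces to $\bm{d}^\top(\hatx - \bm{x}^k) \geq 0$ for every $k \in K$. Since $\conv(\barx) = \conv\{\bm{x}^k\}_{k\in K}$ (the $\bm{x}$-projection of $\conv(\epi_{\barx}(\Q_s))$), this is exactly $\bm{d} \in N_{\conv(\barx)}(\hatx)$. For part (iv), I use that $\conv(\epi_{\barx}(\Q_s))$ is a polyhedron with recession ray $(\bm{0},1)$, so every facet has a nonnegative $\theta$-coefficient. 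Facets with zero $\theta$-coefficient are facets of $\conv(\barx)$, absorbed by requiring $\bm{x} \in \conv(\barx)$. Facets with positive $\theta$-coefficient can be normalized to $\theta \geq v + \bm{\pi}^\top \bm{x}$, and the minimum of $\theta - \bm{\pi}^\top \bm{x}$ over $\conv(\epi_{\barx}(\Q_s))$ is attained at some extreme point $(\hatx^*, \Q_s(\hatx^*))$; since this point is extreme, $\Q_s(\hatx^*) = \co(\Q_s)(\hatx^*)$, so the facet rewrites as $\theta \geq \co(\Q_s)(\hatx^*) + \bm{\pi}^\top(\bm{x} - \hatx^*)$ with $\bm{\pi} \in \dxhat^*$, and (ii) upgrades this to $\bm{\pi} \in \ext(\dxhat^*)$.

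The main obstacle will be the perturbation step in the ``extreme $\Rightarrow$ facet'' direction of (ii): one must choose a single $\epsilon$ that preserves validity of $\bm{\pi} \pm \epsilon \bm{v}$ at \emph{every} extreme point of $\conv(\epi_{\barx}(\Q_s))$, which is possible only because $|K| < \infty$ (Corollary \ref{coro}) and the slacks $\delta_k$ are uniformly positive off the face $F(\bm{\pi})$. A secondary technicality in (iv) is separating the epigraphical facets from the side facets inherited from $\conv(\barx)$; keeping track of the sign of the $\theta$-coefficient and the fact that $\barx$ is bounded (so the minimization defining $v$ is attained at an element of $\barx$) resolves this.
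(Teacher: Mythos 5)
Your proposal is correct and follows essentially the same route as the paper's proof: the finite half-space description of $\dxhat$ obtained from the extreme-point decomposition of $\conv(\epi_{\barx}(\Q_s))$ gives (i), the facet/extreme-point correspondence gives (ii), and the recession-cone computation gives (iii). Your write-up is in fact more complete than the paper's — the paper's argument for (ii) is a one-line assertion about $n+1$ affinely independent tight points versus $n$ linearly independent active constraints, and it omits an explicit proof of (iv) entirely — though, like the paper, you only gesture at the reduction needed when $\conv(\barx)$ fails to be full-dimensional (in which case $\dxhat$ has a lineality space and the extreme-point statement must be read in the quotient).
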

\begin{proof}
See Appendix \ref{pf:polar}. \qed
\end{proof}

This result also suggests that as the cut coefficients shift in the direction of the recession cone of set $\dxhat$, the resulting Lagrangian cuts may serve as valid inequalities only for $\conv(\barx)$. However, these cuts do not contribute to the outer approximations of local epigraphs and should be avoided.

\subsection{Cut strengthening: theoretical foundation}\label{sec_stren_cut}
According to Proposition~\ref{prop_L_shape_ReLU}, we know that any integer L-shaped cut is a special ReLU Lagrangian cut, which is known to be weak \citep{zou2019stochastic}.
In this subsection, we present a method for deriving a stronger ReLU Lagrangian cut by strengthening the integer L-shaped cut coefficients. 

Let us define $\hat{\bm\pi} =  (L - \Q_s(\hatx))\bm \chi$. We aim to find a proper $\bm\eta$ such that the strengthened cut:
\begin{equation}\label{stren_cut}
  \theta \geq \Q_s(\hatx) + 
 \left(\hat{\bm\pi}+\bm{\eta}\right)^\top(\bm x-\hatx),  
\end{equation}
can be close to a non-trivial facet of $\conv(\epi_{X}(\Q_s))$. 
By \Cref{polar}, we know that a strengthened cut \eqref{stren_cut} is a valid ReLU Lagrangian cut if and only if $\hat{\bm \pi} + \bm \eta \in \dxhat$. Therefore, it suffices to find $\bm \eta$ such that $\hat{\bm \pi} + \bm \eta$ is an extreme point of $\dxhat$. To identify an extreme point of a polyhedron, a natural approach is to solve a linear program:
\begin{equation}
\tag{Stren}
\label{stren}
\min \left\{\bm a^\top\bm \eta:
\hat{\bm\pi}+\bm \eta  \in \dxhat\right\}.
\end{equation}

Note that the feasible region of this problem, denoted by $\dxhat-\hat{\bm\pi}:= \{\bm\eta: \Q_s(\hatx)\leq \Q_s(\bm x)+(\hat{\bm\pi}+\bm\eta)^\top(\hatx-\bm x), \forall \bm x\in X\} = \{\bm\eta: (\hatx-\bm x)^\top \bm\eta \geq \Q_s(\hatx)+\hat{\bm\pi}^\top (\hatx-\bm x)-\Q_s(\bm x), \forall \bm x\in X\}$ is unbounded according to \Cref{polar}.  We illustrate this with the following example:
\begin{example}\label{eg:unbounded_stren_prob}
Consider the local recourse function $\Q_s(x_1,x_2) = 
\min\{2y_1+2y_2: 0.2y_1+y_2+x_1+0.5x_2\geq 2.4, \bm y\in\{0,1,2\}^2\}$ with binary first-stage decisions $\bm x\in\{0,1\}^2$. This function takes the values $\Q_s(0,0) = 8, \ \Q_s(1,0)=4, \ \Q_s(0,1)=4$ and $\Q_s(1,1)=2$. Let $\hatx = (1,0)^\top$ and $L=0$ be a lower bound of $\Q_s$. We can derive an L-shaped cut $\theta \geq 4+4(x_1-1)-4x_2$. To enhance this cut, we consider the feasible region of the strengthening problem \eqref{stren}, given by 
 $\dxhat-\hat{\bm\pi} = \left\{\bm \eta:\eta_1\geq -8, \eta_1-\eta_2 \geq -8, \eta_2\leq 2\right\}$ (see \Cref{eg:strategy1:a}). 
 It has two extreme points $(-8,0)^\top$  
 and $(-6, 2)^\top$, corresponding to two facet-defining cuts:
 \begin{subequations}
 \begin{equation}\label{fct-def_-4_-4}
 \theta\geq 4-4(x_1-1)-4x_2,
 \end{equation}
passing through $(0,0,8)^\top$, 
 $(1,0,4)^\top$ and $(0,1,4)^\top$, and
 \begin{equation}\label{fct-def_-2_-2}
 \theta\geq 4-2(x_1-1)-2x_2,
 \end{equation}
  \end{subequations}
passing through $(1,0,4)^\top$, $(0,1,4)^\top$ and $(1,1,2)^\top$. Meanwhile, $\dxhat-\hat{\bm\pi}$ is unbounded with extreme rays $(1,0)^\top$ and $(0,-1)^\top$. The strengthened cut $\theta \geq 4+(4+\eta_1)(x_1-1)+(-4+\eta_2)(x_2-0)$ approaches $x_1\leq 1$ as $\eta_1\rightarrow \infty$ and $x_2\geq 0$ as $\eta_2\rightarrow -\infty$.
\qedA
\end{example}

\begin{figure}[htbp]
 \vspace{-10pt}
\centering
\subfigure[\tiny $\dxhat-\hat{\bm\pi}$ when $L=0$]{
    \begin{tikzpicture}[scale=0.65]
        \begin{axis}[
            axis lines=middle,
            xlabel={$\eta_1$},
            ylabel={$\eta_2$},
            xtick={-8,-6},
            ytick={2,8},
            ymin=-5, ymax=10,
            xmin=-10, xmax=5,
            axis line style={-stealth},
            scatter/classes={
                a={mark=*,black}
            },
            axis equal image,
        ]
        
        \addplot[dashed,very thick] coordinates {(-10,-2)(-8,0) (0,8) (2,10)};
         \addplot[dashed,very thick] coordinates {(-10,2)(5,2)};
         \addplot[dashed,very thick] coordinates {(-8,10)(-8,-5)};

         \addplot [
        pattern=north east lines,
        draw=none,
    ] coordinates {(-8,0) (-6,2) (5,2) (5,-5) (-8,-5)} --cycle;

        \end{axis}
    \end{tikzpicture} \label{eg:strategy1:a}
}
\hfill
\subfigure[\tiny $\dxhat-\hat{\bm\pi}$ with constraints \eqref{reverse_normal_cone}]{
    \begin{tikzpicture}[scale=0.65]
        \begin{axis}[
            axis lines=middle,
            xlabel={$\eta_1$},
            ylabel={$\eta_2$},
            xtick={-8,-6},
            ytick={2,8},
            ymin=-5, ymax=10,
            xmin=-10, xmax=5,
            axis line style={-stealth},
            scatter/classes={
                a={mark=*,black}
            },
            axis equal image,
        ]
        
        \addplot[dashed,very thick] coordinates {(-10,-2)(-8,0) (0,8) (2,10)};
         \addplot[dashed,very thick] coordinates {(-10,2)(5,2)};
         \addplot[dashed,very thick] coordinates {(-8,10)(-8,-5)};
         \addplot[dashed,very thick] coordinates {(0,-5)(0,8)};
         \addplot[dashed,very thick,blue] coordinates {(-5,-5) (5,5)};
         \addplot[dashed,very thick,blue] coordinates {(-10,0)(5,0)};
         \addplot[dashed,very thick,blue] coordinates {(0,-5)(0,8)};

         \addplot [
        pattern=north east lines,
        draw=none,
    ] coordinates {(-8,0) (-6,2) (0,2) (0,0)} --cycle;

        \end{axis}
    \end{tikzpicture}\label{eg:strategy1:b}
    }
    \hfill
\subfigure[\tiny $\dxhat-\hat{\bm\pi}$ when $L=2$]{
    \begin{tikzpicture}[scale=0.65]
        \begin{axis}[
            axis lines=middle,
            xlabel={$\eta_1$},
            ylabel={$\eta_2$},
            xtick={-6,-4},
            ytick={0,-2},
            ymin=-5, ymax=10,
            xmin=-10, xmax=5,
            axis line style={-stealth},
            scatter/classes={
                a={mark=*,black}
            },
            axis equal image,
        ]
        
        \addplot[dashed,very thick] coordinates {(-10,-6)(-4,0) (0,4) (2,6)};
         \addplot[dashed,very thick] coordinates {(-10,0)(5,0)};
         \addplot[dashed,very thick] coordinates {(-6,10)(-6,-5)};

         \addplot [
        pattern=north east lines,
        draw=none,
    ] coordinates {(-4,0) (5,0) (5,-5) (-6,-5) (-6,-2)} --cycle;
        \end{axis}
    \end{tikzpicture}\label{eg:strategy1:c}
  }
\hfill
\subfigure[\tiny $\dxhat-\hat{\bm\pi}$ with constraints \eqref{reverse_normal_cone}]{
    \begin{tikzpicture}[scale=0.65]
        \begin{axis}[
            axis lines=middle,
            xlabel={$\eta_1$},
            ylabel={$\eta_2$},
            xtick={-6,-4},
            ytick={0,-2},
            ymin=-5, ymax=10,
            xmin=-10, xmax=5,
            axis line style={-stealth},
            scatter/classes={
                a={mark=*,black}
            },
            axis equal image,
        ]
        
        \addplot[dashed,very thick] coordinates {(-10,-6)(-4,0) (0,4) (2,6)};
         \addplot[dashed,very thick] coordinates {(-10,0)(5,0)};
         \addplot[dashed,very thick] coordinates {(-6,10)(-6,-5)};
       
        \addplot[dashed,very thick,blue] coordinates {(0,10)(0,-5)};
        \addplot[dashed,very thick,blue] coordinates {(-10,0)(5,0)};
        
         \addplot [
        pattern=north east lines,
        draw=none,
    ] coordinates {(-4,0.2) (0,0.2) (0,-0.2) (-4,-0.2)} --cycle;

        \end{axis}
    \end{tikzpicture}\label{eg:strategy1:d}
   }
\caption{\centering The illustration of Examples \ref{eg:unbounded_stren_prob} and \ref{eg:strategy1}}
 \vspace{-10pt}
\end{figure}

We propose two strategies to prevent the unboundedness of the strengthening problem \eqref{stren}. 
\begin{itemize}
\item {\bf Strategy 1: Introduce additional constraints to bound the feasible region $F_{\eta}$}

When set $X$ is full-dimensional\footnotemark\footnotetext{If set $X$ is not full-dimensional, we can choose $\bm{\eta}$ to be in the null space of affine hull of set $X$.}, we can restrict $\bm{\eta}$ to lie in the reverse direction of the normal cone of $X$. Specifically, the constraints we add are
\begin{equation}\label{reverse_normal_cone}
(\hatx-\bm x)^\top\bm\eta \leq 0, \quad \forall \bm x\in X.
\end{equation}
In this way, the recession cone of the feasible region becomes $\{\bm d: (\hatx-\bm x)^\top \bm d \geq 0, (\hatx-\bm x)^\top \bm d \leq 0,  \forall \bm x\in X\}=\{\bm d: (\hatx-\bm x)^\top \bm d = 0,\forall \bm x\in X\}=\{\bm{0}\}$, where the last equality follows from the full dimensionality of the set $X$. 
However, this approach may eliminate extreme points that satisfy $(\hatx-\bm x)^\top \bm \eta > 0$ for some $\bm x\in X$. In general, we can add box constraints:
\begin{equation}\label{bound_eta}
\underline{M}_i\leq \eta_i\leq \overline{M}_i, \quad i\in[n].
\end{equation} 
One possibility is to ensure that $\mathrm{diag}(\bm\chi)^{-1}\bm\eta \geq \bm{0}$ and $\mathrm{diag}(\bm\chi)^{-1}\bm\eta \leq (\Q_s(\hatx)-L)\bm{1}$, which implies that the absolute values of the entries of $\bm{\eta}$ are in the range $[0, \Q_s(\hatx)-L]$. When $|\underline{M}_i|$ and $|\overline{M}_i|$ are large enough, we can retain all extreme points of the original set $\dxhat-\hat{\bm\pi}$.
A drawback of this strategy is that additional constraints can cause extra extreme points.
\begin{example}\label{eg:strategy1}
In \Cref{eg:unbounded_stren_prob}, at $\hatx=(1,0)^\top$, 
adding constraints \eqref{reverse_normal_cone} to $\dxhat-\hat{\bm\pi}$, the feasible region of \eqref{stren} becomes $\{\bm\eta: -8\leq \eta_1\leq 0, -8\leq \eta_1-\eta_2\leq 0, 0\leq \eta_2\leq 2\}$.
It has the original extreme points and includes two new extreme points $(0,0)^\top$ and $(0,2)^\top$ (see \Cref{eg:strategy1:b}).
When adding constraints $\eta_1\leq 0, \eta_2\geq 0$ to bound the entries of $\bm\eta$, we get the same result.

If we take $L=2$ as the lower bound of the recourse function $\Q$, the L-shaped cut becomes 
$\theta \geq 4+2(x_1-1)-2x_2$, and 
$\dxhat-\hat{\bm\pi} = \{\bm\eta: \eta_1 \geq -6, \eta_1-\eta_2 \geq -4, \eta_2\leq 0\}$. 
It has extreme points $(-6,-2)^\top$ and $(-4, 0)^\top$ (see \Cref{eg:strategy1:c}).
By adding constraints \eqref{reverse_normal_cone}, the feasible region of \eqref{stren} becomes $\{\bm\eta: -4\leq \eta_1\leq 0, \eta_2=0\}$, which excludes the extreme point $(-6,-2)^\top$ (see \Cref{eg:strategy1:d}). If we bound the entries of $\bm\eta$ by adding upper bounds and lower bounds with sufficiently large absolute values, for example, $|\eta_1|\leq 6,|\eta_2|\leq 6$, the feasible region of \eqref{stren} becomes $F_{\bm\eta}^\prime = \{-6\leq \eta_1\leq 6, -6\leq \eta_2\leq 0, \eta_1-\eta_2\geq 4\}$. By doing so, we still have the original two extreme points while adding three new ones-- $(6, 0)^\top$, $(6, -6)^\top$ and $(-6, -6)^\top$. They correspond to cuts $\theta\geq 4+8(x_1-1)-2x_2$, $\theta\geq 4+8(x_1-1)-8x_2$,
and $\theta\geq 4-4(x_1-1)-8x_2$, which are dominated by cuts \eqref{fct-def_-4_-4}, \eqref{fct-def_-2_-2}.
\qedA
\end{example}

\item  {\bf Strategy 2: Choose appropriate coefficients of the objective function}\label{strategy2}

\noindent
We restrict the objective coefficients to lie in a reverse direction of the tangent cone of $\conv(X)$ at $\hatx$, i.e., $-\bm a\in \mathcal{T}_{\conv(X)}(\hatx):= \cone\{\bm x-\hatx: \bm x\in X\}$. 
\begin{restatable}{proposition}{revtag}\label{prop:-a}
The strengthening problem \eqref{stren} is bounded if and only if $-\bm a\in \mathcal{T}_{\conv(X)}(\hatx)$. 
\end{restatable}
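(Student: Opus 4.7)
The plan is to identify the strengthening problem \eqref{stren} as a linear program minimizing $\bm a^\top\bm\eta$ over the translated polyhedron $\dxhat - \hat{\bm\pi}$, and then combine the standard LP boundedness criterion with the polar duality between the tangent and normal cones of $\conv(X)$ at $\hatx$.

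First I would verify feasibility of \eqref{stren}. By Proposition~\ref{prop_L_shape_ReLU}, the integer L-shaped cut at $\hatx$ is a ReLU Lagrangian cut, and in the purely binary setting it reduces via the orthonormal transformation $\diag(\bm\chi)$ (from the previous subsection) to an ordinary Lagrangian cut with multipliers $\hat{\bm\pi}=(L-\Q_s(\hatx))\bm\chi$. Thus $\hat{\bm\pi}\in\dxhat$, so $\bm\eta=\bm 0$ is feasible. Because translation preserves recession cones, Theorem~\ref{polar}(iii) yields
\[
\reccone(\dxhat-\hat{\bm\pi}) \;=\; \reccone(\dxhat) \;=\; N_{\conv(X)}(\hatx).
\]
A feasible LP over a polyhedron is bounded if and only if its objective has nonnegative inner product with every direction of the recession cone, so \eqref{stren} is bounded if and only if $\bm a^\top\bm d \geq 0$ for all $\bm d \in N_{\conv(X)}(\hatx)$.

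Next I would translate this condition into a statement about $-\bm a$ and $\mathcal{T}_{\conv(X)}(\hatx)$. The inequality $\bm a^\top\bm d\geq 0$ on $N_{\conv(X)}(\hatx)$ is equivalent to $(-\bm a)^\top\bm d\leq 0$ on the same cone, i.e., $-\bm a$ lies in the polar cone $N_{\conv(X)}(\hatx)^\circ$. Since $X$ is a bounded set of integer vectors, $\conv(X)$ is a polytope and $\mathcal{T}_{\conv(X)}(\hatx)=\cone\{\bm x-\hatx:\bm x\in X\}$ is automatically a closed polyhedral cone; the standard tangent--normal cone duality then gives $\bigl(\mathcal{T}_{\conv(X)}(\hatx)\bigr)^\circ = N_{\conv(X)}(\hatx)$, and the bipolar theorem yields
\[
N_{\conv(X)}(\hatx)^\circ \;=\; \mathcal{T}_{\conv(X)}(\hatx).
\]
Hence the boundedness condition is precisely $-\bm a\in \mathcal{T}_{\conv(X)}(\hatx)$, and both implications follow from the same chain of equivalences.

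The argument is mostly bookkeeping on top of Theorem~\ref{polar}(iii); the only subtle point I would take care with is the sign convention. I would state explicitly the convention $N_C(\hatx)=\{\bm v:\bm v^\top(\bm x-\hatx)\leq 0,\,\forall \bm x\in C\}$ being used, so that the appearance of $-\bm a$ (rather than $\bm a$) in the conclusion is transparent, and note that no closure is needed in the bipolar step because $\mathcal{T}_{\conv(X)}(\hatx)$ is already closed when $X$ is finite.
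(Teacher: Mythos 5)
Your proof is correct, but it reaches the conclusion by a different route than the paper. The paper's proof is a two-line LP-duality argument: since $\bm\eta=\bm 0$ is feasible, \eqref{stren} is bounded if and only if its explicit dual
\[
\max \Big\{\textstyle\sum_{k\in K}\big(\Q_s(\hatx)-\Q_s(\bm x^k)-\hat{\bm \pi}^\top(\hatx-\bm x^k)\big)y_k:\ \sum_{k\in K} (\hatx -\bm x^k)y_k = \bm a,\ \bm y \geq \bm{0}\Big\}
\]
is feasible, and dual feasibility is literally the statement $\bm a\in\cone\{\hatx-\bm x:\bm x\in X\}$, i.e.\ $-\bm a\in\mathcal{T}_{\conv(X)}(\hatx)$. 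You instead invoke the recession-cone criterion for boundedness of a feasible LP, identify $\reccone(\dxhat-\hat{\bm\pi})=N_{\conv(X)}(\hatx)$ via Theorem~\ref{polar}(iii), and convert ``$\bm a^\top\bm d\geq 0$ on the normal cone'' into $-\bm a\in N_{\conv(X)}(\hatx)^\circ=\mathcal{T}_{\conv(X)}(\hatx)$ by the bipolar theorem (valid here since the tangent cone is finitely generated, hence closed). The two arguments are Farkas-dual to one another, so neither is more general; the paper's version avoids any appeal to Theorem~\ref{polar} or to cone polarity and is self-contained, while yours reuses machinery already established in the paper and makes the feasibility of $\bm\eta=\bm 0$ explicit (via the L-shaped cut being a tight ReLU Lagrangian cut), a point the paper merely asserts. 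Your care with the sign convention for the normal cone is warranted and matches the convention implicit in the paper's proof of Theorem~\ref{polar}(iii).
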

\begin{proof}
See Appendix \ref{pf:-a}. \qed
\end{proof}

A practical choice is to let $\bm{a} = \sum_{i\in I} (\hatx-\bm x^i)$, where $(\bm x^i)$'s are all explored first-stage decisions. 
\end{itemize}
\begin{example}
In \Cref{eg:unbounded_stren_prob}, since $\hat{x}_1=1$ and $\hat{x}_2=0$, we let $a_1=-1$ and $a_2=1$. The strengthening problem \eqref{stren} then becomes
$\min\{\eta_1-\eta_2:\eta_1\geq -8, \eta_1-\eta_2 \geq -8, \eta_2\leq 2\}$.
Both extreme points $(-8,0)^\top$ and $(-6, 2)^\top$ and their convex combinations are optimal.
\qedA
\end{example}

\subsection{Cut strengthening -- practical implementations based on LP relaxation}
In this subsection, we integrate Strategies 1 and 2 within a practical framework to efficiently strengthen the ReLU Lagrangian cuts based on L-shaped ones. Although the feasible region of \eqref{stren} is a polyhedron, it is generally NP-hard to separate from it. As an alternative, we replace the set $X$ with its LP relaxation, $X^{LP}$. Thus, rather than solving the original strengthening problem
\begin{equation*}
\min_{\bm\eta}\left\{\bm a^\top \bm\eta:
\min_{\bm x}\left\{ \Q_s(\bm x)+(\hat{\bm\pi}+\bm\eta)^\top(\hatx-\bm x):\bm x\in X\right\}\geq \Q_s(\hatx)\right\},
\end{equation*}
we consider
\begin{equation}\label{stren_lp}
\min_{\bm\eta}\left\{\bm a^\top \bm\eta:
\min_{\bm x}\left\{\Q_s^{LP}(\bm x)+(\hat{\bm\pi}+\bm\eta)^\top(\hatx-\bm x):\bm x\in X^{LP}\right\}\geq \Q_s(\hatx)\right\},
\end{equation}
where 
$\Q^{LP}_s$ is defined by solving the LP relaxation of the local recourse problem \eqref{loc_rec}.
\begin{restatable}{proposition}{shrink}\label{prop:feas_eta}
    Any feasible solution of \eqref{stren_lp} is also feasible for \eqref{stren}.
\end{restatable}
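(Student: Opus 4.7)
The plan is to show feasibility by a direct chain of inequalities, exploiting two independent relaxations that both work in the same direction: relaxing integrality in the second stage (replacing $\Q_s$ by $\Q^{LP}_s$) and relaxing integrality in the first stage (replacing $X$ by $X^{LP}$). Both operations can only decrease the value of the inner minimization problem defining feasibility in \eqref{stren} and \eqref{stren_lp}, so feasibility under the weaker (LP-based) condition implies feasibility under the stronger (MIP-based) one.

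Concretely, I would start by fixing an $\bm{\eta}$ feasible for \eqref{stren_lp}, which by definition means
\[
\min_{\bm{x}\in X^{LP}}\left\{\Q^{LP}_s(\bm{x})+(\hat{\bm\pi}+\bm{\eta})^\top(\hatx-\bm{x})\right\}\geq \Q_s(\hatx).
\]
Since $X\subseteq X^{LP}$, restricting the minimization to the smaller set $X$ can only increase the optimal value, so
\[
\min_{\bm{x}\in X}\left\{\Q^{LP}_s(\bm{x})+(\hat{\bm\pi}+\bm{\eta})^\top(\hatx-\bm{x})\right\}\geq \Q_s(\hatx).
\]
Next I would invoke the pointwise bound $\Q^{LP}_s(\bm{x})\leq \Q_s(\bm{x})$ for every $\bm{x}\in X$, which holds because the LP relaxation of the scenario recourse problem \eqref{loc_rec} has objective value no larger than its MIP counterpart. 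Adding the same linear perturbation $(\hat{\bm\pi}+\bm{\eta})^\top(\hatx-\bm{x})$ to both sides preserves the inequality, so
\[
\Q_s(\bm{x})+(\hat{\bm\pi}+\bm{\eta})^\top(\hatx-\bm{x})\geq \Q^{LP}_s(\bm{x})+(\hat{\bm\pi}+\bm{\eta})^\top(\hatx-\bm{x}),\qquad \forall\,\bm{x}\in X.
\]
Taking the minimum over $\bm{x}\in X$ on both sides then yields
\[
\min_{\bm{x}\in X}\left\{\Q_s(\bm{x})+(\hat{\bm\pi}+\bm{\eta})^\top(\hatx-\bm{x})\right\}\geq \Q_s(\hatx),
\]
which is exactly the feasibility condition for \eqref{stren}.

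Honestly, there is no serious obstacle here; the argument is a two-step relaxation. The only point that deserves a sentence of care is making sure both relaxations point the same direction — enlarging the feasible set and lowering the objective can only weaken the lower bound on the inner minimum, so a stronger lower-bound condition in \eqref{stren_lp} implies the weaker one in \eqref{stren}. I would also remark in passing that the converse does \emph{not} hold, which is precisely why \eqref{stren_lp} is a tractable inner approximation of \eqref{stren} and is the useful object in practice.
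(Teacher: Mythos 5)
Your proof is correct and uses exactly the same two ingredients as the paper's proof --- the pointwise bound $\Q^{LP}_s(\bm x)\leq \Q_s(\bm x)$ and the inclusion $X\subseteq X^{LP}$ --- chained into the same string of inequalities, merely written in the opposite order. No substantive difference.
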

\begin{proof}
See Appendix \ref{pf:shrink}. \qed
\end{proof}
This proposition shows that by relaxing the inner minimization problem, we can tighten the feasible region of the strengthening problem. When $\Q_s^{LP}(\hatx) < \Q_s(\hatx)$, we have $\min_{\bm x\in X^{LP}}\Q_s^{LP}(\bm x)+(\hat{\bm\pi}+\bm\eta)^\top(\hatx-\bm x)\leq \Q_s^{LP}(\hatx) < \Q_s(\hatx)$ for any $\bm \eta\in\Re^n$. Thus, it is possible that the resulting formulation \eqref{stren_lp} is infeasible. In this case, we need to improve the formulation \eqref{stren_lp}.

\begin{example}
Consider the local recourse function 
$\Q_1(x)=\min\{y_1+y_2:2y_1+y_2 \geq 3x+2, 0\leq y_1\leq 2, 0\leq y_2\leq 3, \bm y\in\Ze^2\}$.
It is easy to check that $\Q_1(1)=\Q^{LP}_1(1)=3$. Therefore, when $\hat{x}=1$, the relaxed strengthening problem \eqref{stren_lp} is feasible.
However, consider another recourse function:
$\Q_2(x)=\min\{y_1+y_2:2y_1+y_2 \geq 3x, 0\leq y_1\leq 2, 0\leq y_2\leq 3, \bm y\in\Ze^2\}$.
In this case
$\Q_2(1)=2$ while $\Q^{LP}_2(1)=\frac{3}{2}$. Thus, the relaxed strengthening problem \eqref{stren_lp} becomes infeasible.
\qedA
\end{example}
To address the feasibility issue, we incorporate the following no-good cut into the inner minimization problem of the formulation \eqref{stren_lp}:
$$\bm\chi^\top(\hatx-\bm x)\geq 1,$$
where we recall that $\chi_i = 2\hat{x}_i-1$ for each $i\in [n]$.
Next, we show that the strengthening problem \eqref{stren_lp} is always feasible after incorporating the no-good cut.
\begin{restatable}
{proposition}{nogood}\label{prop:nogood}
The set
\begin{equation}\label{F_eta_lp}
F_{s}^{LP}:=\left\{\bm\eta:\min_{\bm x}\left\{\Q_s^{LP}(\bm x)+(\hat{\bm\pi}+\bm\eta)^\top(\hatx-\bm x):\bm x\in\widetilde{X}\right\}\geq \Q_s(\hatx)\right\}
\end{equation}
is always nonempty, where $\widetilde{X}:=\{\bm x\in X^{LP}: \bm\chi^\top(\hatx-\bm x)\geq 1\}$.
\end{restatable}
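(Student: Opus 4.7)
The plan is to exhibit an explicit feasible $\bm\eta^\ast$ in $F_s^{LP}$ by choosing coefficients that point in the ``sign direction'' $\bm\chi$ of $\hatx$, and then exploiting the no-good cut $\bm\chi^\top(\hatx-\bm x)\geq 1$ to absorb any drop from $\Q_s^{LP}$ relative to $\Q_s(\hatx)$. Concretely, set $\bm\pi^\ast = c\bm\chi$ for a scalar $c\geq 0$ to be chosen, and correspondingly $\bm\eta^\ast = \bm\pi^\ast - \hat{\bm\pi} = (c - L + \Q_s(\hatx))\bm\chi$. Then for every $\bm x\in\widetilde{X}$ with $\Q_s^{LP}(\bm x)$ finite, nonnegativity of $c$ combined with $\bm\chi^\top(\hatx-\bm x)\geq 1$ yields
\begin{equation*}
\Q_s^{LP}(\bm x) + (\hat{\bm\pi}+\bm\eta^\ast)^\top(\hatx-\bm x)
=\Q_s^{LP}(\bm x) + c\,\bm\chi^\top(\hatx-\bm x)
\geq \Q_s^{LP}(\bm x) + c.
\end{equation*}
Points in $\widetilde{X}$ where $\Q_s^{LP}(\bm x)=+\infty$ automatically satisfy the constraint, so they need not be considered.

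The next step is to obtain a finite lower bound $L^{LP}$ for $\Q_s^{LP}$ on the finite part of $\widetilde{X}$; once this is in hand, choosing $c\geq \Q_s(\hatx)-L^{LP}$ (which is nonnegative since $L^{LP}\leq \Q_s^{LP}(\hatx)\leq \Q_s(\hatx)$) completes the argument, because the displayed lower bound becomes $L^{LP}+c\geq \Q_s(\hatx)$, so that $\bm\eta^\ast\in F_s^{LP}$.

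The main technical obstacle is justifying the existence of such an $L^{LP}$: the LP relaxation of the second-stage may fail to be feasible at points of $X^{LP}\setminus X$, and one must also rule out the value $-\infty$. I would dispatch this as follows. By Assumption~\ref{assumption:rel_comp_rec}, the second-stage LP is feasible at $\hatx\in X$, hence $\Q_s^{LP}(\hatx)\leq \Q_s(\hatx)<+\infty$. By LP duality, the dual of the second-stage LP is then feasible, and its feasibility does not depend on $\bm x$; consequently $\Q_s^{LP}(\bm x)>-\infty$ everywhere on $X^{LP}$, so $\Q_s^{LP}$ is either finite or $+\infty$ on any $\bm x\in\widetilde{X}$. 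On its domain of finiteness intersected with the bounded polyhedron $\widetilde{X}$, $\Q_s^{LP}$ is piecewise linear and convex, and therefore attains a finite minimum, which we take as $L^{LP}$. This finishes the construction and shows $F_s^{LP}\neq\emptyset$.
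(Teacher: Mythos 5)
Your construction is correct in substance but follows a genuinely different route from the paper. The paper never exhibits an explicit element of $F_s^{LP}$: it rewrites membership in $F_s^{LP}$ as the existence of a hyperplane with positive $\theta$-coefficient separating the point $(\hatx,\Q_s(\hatx))$ from the polyhedral epigraph $\epi_{\widetilde{X}}(\Q_s^{LP})$, notes that the no-good cut forces $\hatx\notin\widetilde{X}$ so that the point lies outside this closed convex set, invokes the separation theorem, and then perturbs the separator in the degenerate case $\beta=0$ to make the $\theta$-coefficient strictly positive. You instead use the no-good cut \emph{quantitatively}: the margin $\bm\chi^\top(\hatx-\bm x)\geq 1$ lets the single direction $c\bm\chi$, for $c$ large enough, absorb the entire gap $\Q_s(\hatx)-L^{LP}$. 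Your argument is more elementary and constructive -- it produces a usable feasible $\bm\eta^\ast$ in closed form once $L^{LP}$ is known -- at the cost of having to establish a finite lower bound for $\Q_s^{LP}$ on $\widetilde{X}$, which is where the only real work in your proof sits.

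Two steps there deserve tightening. First, ``the second-stage LP is feasible at $\hatx$, hence by LP duality the dual is feasible'' is not valid as stated: primal feasibility alone does not give dual feasibility; you also need the primal LP at $\hatx$ to be bounded below. This does hold, but the justification goes through Assumption \ref{assumption:lb} together with rationality of the data: $\Q_s(\hatx)\geq L$ and, for a feasible rational MIP, unboundedness of the LP relaxation would force unboundedness of the MIP, so $\Q_s^{LP}(\hatx)>-\infty$ and only then is the dual feasible. Second, your claim that $\Q_s(\hatx)-L^{LP}\geq 0$ ``since $L^{LP}\leq \Q_s^{LP}(\hatx)$'' does not follow from the definition of $L^{LP}$ as a minimum over $\widetilde{X}$, because $\hatx\notin\widetilde{X}$. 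This is harmless -- simply take $c=\max\{0,\,\Q_s(\hatx)-L^{LP}\}$ -- but as written the inequality is unjustified. With these two repairs the proof is complete.
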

\begin{proof}
See Appendix \ref{pf:nogood}. \qed
\end{proof}

The relaxed strengthening problem is then given by
\begin{align*}
\min_{\bm \eta}\left\{\bm a^\top \bm\eta:
\Q_s(\hatx)\leq \min_{\bm x,\bm y}\left\{\bm q^\top \bm y + (\hat{\bm\pi}+\bm\eta)^\top(\hatx-\bm x):
\bm A\bm x \geq \bm b,
\bm T^s\bm x + \bm W^s\bm y \geq  \bm h^s,\bm\chi^\top(\hatx-\bm x)\geq 1,\bm{0}\leq \bm x\leq \bm{1}
\right\}\right\}.
\end{align*}
Taking the dual of the minimization problem on the right-hand side of the constraint and invoking LP strong duality, we have
\begin{align*}
&\min_{\bm\eta}\bigg\{\bm a^\top \bm\eta:
\Q_s(\hatx)\leq (\hat{\bm \pi}+\bm\eta)^\top \hatx + \max_{\bm\tau,\bm\sigma,\rho,\bm\omega}\bigg\{\bm b^\top\bm\tau+(\bm h^s)^\top\bm \sigma+(1-\sum_{i\in[n]}\hat{x}_i)\rho+\bm{1}^\top\bm\omega:\\
&\bm A^\top\bm\tau+(\bm T^s)^\top\bm\sigma-\bm \chi\rho+\bm\omega\leq -(\hat{\bm\pi}+\bm\eta),
(\bm W^s)^\top \bm\sigma \leq \bm q,
\bm\tau\geq \bm{0}, \bm \sigma \geq \bm{0},\rho \geq 0, \bm\omega \leq \bm{0}\bigg\}\bigg\}.
\end{align*}
Replacing the maximization operator with the existential quantifier, we obtain an equivalent strengthening linear program:
\begin{equation} \tag{Stren LP}\label{stren_lp_ngc}
\begin{aligned}
&\min_{\bm\eta,\bm\tau,\bm\sigma,\rho,\bm\omega}\bigg\{ \bm a^\top \bm\eta:
\hatx^\top\bm\eta+\bm b^\top\bm\tau+(\bm h^s)^\top\bm \sigma+(1-\sum_{i\in[n]}\hat{x}_i)\rho+\bm{1}^\top\bm\omega\geq \Q_s(\hatx)+\hatx^\top\hat{\bm\pi},\\
& \bm A^\top\bm\tau+(\bm T^s)^\top\bm\sigma+\bm \chi\rho+\bm\omega+\bm\eta\leq -\hat{\bm\pi},
(\bm W^s)^\top \bm\sigma \leq \bm q,
\bm\tau\geq \bm{0}, \bm \sigma \geq \bm{0},\rho \geq 0,\bm\omega \leq 0\bigg\}.
\end{aligned}
\end{equation}

This LP can also be unbounded. Similar to the analysis of \eqref{stren}, the feasible region of $\bm\eta$ is unbounded, with its recession cone being the normal cone of $\conv(\widetilde{X})$ at point $\hatx$. As stated in \Cref{prop:feas_eta}, the feasible region of the LP-based formulation is a subset of that of the original strengthening problem. Therefore, the two strategies introduced in \Cref{sec_stren_cut} can effectively address the unboundedness issue here. Specifically, in Strategy 1, the additional constraints that ensure the boundedness of the feasible region in \eqref{stren} are also sufficient to guarantee that the feasible region of \eqref{stren_lp_ngc} is bounded. That is, when restricting $\bm\eta$ in the reverse direction of the recession cone, we add the following constraint to \eqref{stren_lp_ngc}:
\begin{align*}
    \min_{\bm x} \left\{ \bm x^\top \bm\eta:\bm A\bm x\geq \bm b, \bm\chi^\top(\hatx-\bm x)\geq 1,
    \bm{0}\leq \bm x\leq \bm{1}\right\} \geq \hatx^\top\bm\eta,
\end{align*}
which is different from \eqref{reverse_normal_cone} since we replace set $X^{LP}$ by a smaller set $\widetilde{X}$. 
Similarly, we can take the dual of the minimization problem on the left-hand side to make it compatible with the LP. Then, it is equivalent to adding the following constraints to \eqref{stren_lp_ngc}:
$$
\begin{aligned}
 &\bm b^\top\bm\tau_1+\left(1-\sum_{i\in[n]}\hat{x}_i\right)\rho_1+\bm{1}^\top \bm\omega_1 -\hatx^\top\bm\eta   \geq 0,\bm A^\top \bm\tau_1 + \bm\chi\rho_1+\bm\omega_1 -\bm\eta\leq\bm{0}, \bm\tau_1 \geq \bm{0}, \rho_1\geq 0,  \bm\omega\leq \bm{0}.
\end{aligned}
$$
In Strategy 2, for any objective coefficient $\bm a$ such that $-\bm a\in\mathcal{T}_{\conv(X)}(\hatx)$, we have $-\bm a\in\mathcal{T}_{\conv(\widetilde{X})}(\hatx)$ as $X\backslash\{\hatx\}\subseteq\widetilde{X}$. Thus, \eqref{stren_lp_ngc} is bounded according to \Cref{prop:-a}.

The next example illustrates the importance of the no-good cuts. 
\begin{example}\label{eg:infeas_stren_prob}
Recall in \Cref{eg:unbounded_stren_prob}, the local recourse function is
$\Q_s(x_1,x_2) = \min_{\bm y}\{ 2y_1+2y_2:
0.2y_1+y_2+x_1+0.5x_2\geq 2.4,
\bm y\in\{0,1,2\}^2\}$ with binary first-stage decisions 
$\bm x\in \{0,1\}^2$.
At $\hatx=(1,1)$, we can generate an L-shaped cut
$\theta \geq 2+2(x_1-1)+2(x_2-1)$.
To strengthen this cut, we solve the following problem using Strategy 2 to avoid unboundedness:
\begin{equation}\label{eg1_opt_con}
\min_{\bm{\eta}} \left\{\eta_1+\eta_2:
  \min_{\bm x\in[0,1]^2}\left\{\Q_s^{LP}(\bm x)+(2+\eta_1)(1-x_1)+(2+
  \eta_2)(1-x_2)\right\}\geq 2\right\},
\end{equation}
which is equivalent to
\begin{align*}
\min_{\bm{\eta}}\{&\eta_1+\eta_2: 4+\eta_1+\eta_2+2.4\alpha+2\beta_1+2\beta_2+\gamma_1+\gamma_2\geq 2,
0.2\alpha+\beta_1\leq 2,
\alpha+\beta_2\leq 2,\\
&\alpha+\gamma_1\leq -(2+\eta_1),
0.5\alpha+\gamma_2\leq -(2+\eta_2),
\alpha\geq 0,
\bm{\beta}\leq \bm{0},
\bm{\gamma}\leq \bm{0}\}.
\end{align*}
This problem is infeasible. However, if we add a constraint $(1-x_1)+(1-x_2) \geq 1$ to the minimization problem in \eqref{eg1_opt_con},
the problem becomes feasible with optimal solution $(-3.8, -2.8)$. In this way, we obtain a stronger ReLU Lagrangian cut $\theta \geq 2-1.8(x_1-1)-0.8(x_2-1)$.
\qedA
\end{example}

To further enhance the relaxed strengthening problem \eqref{stren_lp}, we can incorporate additional valid inequalities of $\conv\{(\bm x, \theta) \in \epi_X(\Q_s):\bm x \neq \hatx\}$ into the inner minimization problem, similar to the no-good cuts. For example, we can add the following objective cuts.
\begin{definition}[Objective cuts]
For each scenario $s\in[N]$, let $L_s$ be a lower bound of $\Q_s$. Then, we can introduce the following objective cut $(\bm q^s)^\top y \geq L_s$ to the inner minimization problem of \eqref{stren_lp_ngc}, 
where we assume that $L_s \geq L$. In fact, this lower bound $L_s$ can be obtained by solving the scenario problem
$L_s := \min_{\bm x\in X} \Q_s(\bm x).$
\end{definition}

\subsection{Implementation details}\label{sec:implementation_two_stage}
This subsection details the implementations of the ReLU Lagrangian cuts for stochastic integer programs with purely binary first-stage decisions within the framework of the basic cutting plane method (see \Cref{alg:binary}).
This procedure is also suitable for the branch-and-cut algorithm.

\begin{algorithm}[htbp]
\caption{ReLU Lagrangian cuts for binary first-stage decisions}\label{alg:binary}
\begin{algorithmic}[1]
\State \textbf{Input:} 
Master problem: $v^*=\min\{\bm c^\top \bm x + \theta: \bm A\bm x\geq \bm b, \bm x \in X\}$
and local recourse problems \eqref{loc_rec}
\State \textbf{Output:}
Optimal solution $x^*$
\State \textbf{Initialize:}
$lb\gets -\infty,\ ub\gets+\infty, \ i\gets 0$
\State Analyze the master problem and its LP relaxation to select an appropriate strategy for avoiding unboundedness
\While{stopping criterion not met}{
    \State Solve the master problem, obtain an optimal solution $\hatx$, and set $lb\gets v^*$
    \For{$s\in [N]$}
        \State Solve the local recourse problem and obtain $\Q_s(\hatx)$, and generate the L-shaped coefficients $\bm \pi_s^i$
        \State Update the strengthening problem \eqref{stren_lp_ngc} and solve it
        \If{ \eqref{stren_lp_ngc} is optimal}
        \State Let $\hat{\bm{\eta}}_s^i$ be an optimal solution, and  $\bm\pi_s^i \gets \bm \pi_s^i+\hat{\bm\eta}_s^i$
        \EndIf
    \EndFor
    \State Add the Lagrangian cut
        $\theta \geq p_s(\Q_s(\hatx)+ (\bm \pi_s^i)^\top \bm x)$
        to the master problem
    \State $ub\gets \min\{ub, \bm c^\top\hatx+\sum_{s\in[N]}p_s\Q_s(\hatx)\}$ and $i\gets i+1$
    \EndWhile
}
\end{algorithmic}
\end{algorithm}

Given that the feasible region is unbounded for any binary $\hat{\bm x}$, it is important to ensure that the strengthening problem \eqref{stren_lp_ngc} is bounded prior to the cut-strengthening procedure. We seek an approach, potentially combining both strategies introduced in \Cref{sec_stren_cut}, that preserves as many extreme points of the original feasible region as possible, as they correspond to cuts that are closer to being facet-defining. Since both strategies rely on an incumbent solution to determine the additional constraints and the objective function, we also aim to minimize the necessary updates to the strengthening problem in each iteration. First, we try to select appropriate objective coefficients following Strategy 2 without adjusting the feasible region. However, in practice, tracking the tangent cone at the incumbent solution can be challenging. It is desirable to add constraints and choose the objective coefficients to guarantee that the strengthening problem is bounded while its optimal solution effectively improves the cut. One approach is to set the objective coefficients $\bm a=\bm\chi$ and enforce $\mathrm{diag}(\bm\chi)^{-1}\bm\eta \geq \bm{0}$. In this way, we have $\bm\eta^\top(\bm x-\hatx) = (\mathrm{diag}(\bm\chi)^{-1}\bm\eta)^\top\mathrm{diag}(\bm\chi)(\bm x-\hatx)\geq 0$. If the strengthening problem is optimal, the resulting cut \eqref{stren_cut}, though not necessarily facet-defining, will dominate the original L-shaped cut. If the problem remains unbounded, we add constraints $\mathrm{diag}(\bm\chi)^{-1}\bm\eta \leq (\Q_s(\hatx)-L)\bm{1}$ to it and solve the problem again, as described in Strategy 1.

At each iteration, we need to update the optimality condition in the strengthening problem based on the
current first-stage decision $\hatx$ and its local recourse function value $\Q_s(\hatx)$. In addition, depending on which strategy is used to avoid unboundedness, we either choose the objective coefficients according to $\hatx$ or use random coefficients and set upper and lower bounds for the entries of $\bm\eta$ according to the L-shaped cuts' coefficients. Although we address the infeasibility of the strengthening problem caused by the integrality gap by adding a no-good cut to the inner minimization problem, the additional constraints introduced in Strategy 1 to prevent unboundedness may still render the problem infeasible. In this case, the L-shaped cuts serve as alternative ReLU Lagrangian cuts.

We can also combine them with other cuts that help accelerate the algorithm. For example, in \cite{zou2019stochastic}, the Lagrangian cuts can be implemented together with the Benders cuts and the strengthened Benders cuts. These cuts may not be tight but can improve the outer approximation of the expected epigraph.

\section{General Mixed Integer First-stage Decisions}\label{sec_general_mixed}
In this section, we study ReLU Lagrangian cuts for SMIPs with mixed integer first-stage decisions. Following the strategies outlined in Section \ref{sec_binary}, we begin with an easily obtainable valid cut. For stochastic programs with purely integer first-stage decisions, we convert them into models with binary first-stage variables and then apply the methods from Section \ref{sec_binary}. For programs with mixed integer first-stage decisions, we use the reverse norm cuts to derive the initial cuts. In both cases, similar to \Cref{sec_binary}, the cuts can be strengthened based on the dual optimality condition \eqref{gen_lag_opt_cond}.

\subsection{General integer first-stage decisions: binarization vs. non-binarization}
According to \cite{zou2019stochastic}, Lagrangian cuts can be applied to general integer first-stage decisions by binarizing integer variables. In this subsection, we show that while the ReLU Lagrangian cuts can effectively solve the problem without reformulating the original problem, binarization provides additional benefits. Specifically, it not only reduces the number of variables required to linearize the cuts but also achieves stronger outer approximations of the local epigraphs. Throughout this subsection, we assume that $\barx\subseteq \Ze^n$.

For a given integer first-stage decisions $\hatx\in\barx$, we can construct a $\Lambda$-shaped cut \eqref{gen_lshaped} as an initial ReLU Lagrangian cut. Alternatively, we can first binarize integer variables $\bm x$. Let $N_i = \lfloor \log_2 B_i\rfloor$ and represent $x_i$ as 
$x_i = \sum_{j\in [0,N_i]} 2^j\delta^i_{j}$ with $\bm{\delta}^i\in\{0,1\}^{N_i+1} $ for each $i\in [n].$ In the binarized space, we can then derive an L-shaped cut as an initial cut:
\begin{equation}\label{bi_lshaped}
\theta \geq \Q_s(\hatx) - (\Q_s(\hatx)-L)\sum_{i\in[n]}||\bm\delta^i-\hat{\bm\delta}^i||_1.
\end{equation}

We compare cuts \eqref{gen_lshaped} and \eqref{bi_lshaped} from two perspectives. First, Compare the additional variables introduced. In \eqref{gen_lshaped}, for each $i\in[n]$ and $\hat{x}_i\in[1,B_i-1]$, as $|x_i-\hat{x}_i|=(x_i-\hat{x}_i)^++(x_i-\hat{x}_i)^-$, we need a binary variable to linearize the terms $(x_i-\hat{x}_i)^+$ and $(x_i-\hat{x}_i)^-$ as shown in the constraint system \eqref{lin_gen_lag}. This results in up to $\sum_{i\in[n]}(B_i-1)$ additional binary variables. In contrast, it takes $\sum_{i\in[n]}(1+\lfloor \log_2 B_i\rfloor)$ binary variables to represent the first-stage decisions.

Second, we show that the L-shaped cuts derived in the binarized space perform better by proving the inclusion of the convex hulls of the subsystems described by two types of initial cuts.
\begin{restatable}{proposition}{lam}\label{prop:lam_l}
Given a first-stage feasible solution $\hatx\in\barx$, let us define $E_\Lambda^s =\left\{(\bm x, \theta)\in \mathcal{B}\times\Re: \eqref{gen_lshaped}\right\}$ and 
$$E_L^s = \left\{(\bm x, \theta)\in \mathcal{B}\times\Re: 
\exists \bm\delta\in \{0,1\}^{N_1+1}\times\cdots\times\{0,1\}^{N_n+1},x_i = \sum_{j\in [0,N_i]}2^j \delta^i_j,  \forall i\in[n],\eqref{bi_lshaped}
\right\},$$
where $\mathcal{B}:= \times_{i\in[n]}[0,B_i]$.
If $B_i\geq 3$ for all $i\in[n]$, then $\conv(E^s_\Lambda) \supseteq \conv(E^s_L)$.
\end{restatable}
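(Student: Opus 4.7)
The plan is to establish the inclusion $\conv(E_L^s)\subseteq \conv(E_\Lambda^s)$ by verifying it at extreme points of $\conv(E_L^s)$. Since $E_L^s$ is a union of vertical rays, one at each integer point of $\mathcal{B}$ compatible with some binary expansion, every extreme point of $\conv(E_L^s)$ has the form $(\bm x^*,f_L(\bm x^*))$, where $\bm x^*$ is integer with binary expansion $\bm \delta(\bm x^*)$ and $f_L(\bm x^*):=\Q_s(\hatx)-(\Q_s(\hatx)-L)\sum_{i}\|\bm\delta^i(x_i^*)-\hat{\bm\delta}^i\|_1$. It therefore suffices to show that each such base point lies in $\conv(E_\Lambda^s)$.

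The key tool is the extended formulation for the $\Lambda$-shaped cut in \eqref{lin_gen_lag}. By \Cref{prop:conv_gen_lshaped}, continuous relaxation of the auxiliary variable $\bm z$ recovers the convex hull of $S_1$, and hence of the corresponding $\Lambda$-shaped subsystem. So membership of $(\bm x^*,f_L(\bm x^*))$ in $\conv(E_\Lambda^s)$ reduces to exhibiting auxiliaries $(\bm\omega^+,\bm\omega^-,\bm z)\in\Re^n\times\Re^n\times[0,1]^n$ satisfying $\omega_i^+-\omega_i^-=x_i^*-\hat x_i$, $0\le\omega_i^+\le(B_i-\hat x_i)z_i$, $0\le\omega_i^-\le\hat x_i(1-z_i)$, together with the aggregate inequality $\sum_i(\omega_i^+ + \omega_i^-)\le\sum_i\|\bm\delta^i(x_i^*)-\hat{\bm\delta}^i\|_1$, which is exactly what is needed so that the linearized $\Lambda$-shaped cut at value $f_L(\bm x^*)$ holds.

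Second, I would construct $(\omega_i^+,\omega_i^-,z_i)$ coordinate-by-coordinate using the flipped-bit structure. For each coordinate $i$, let $D_i=\{j: \delta^i_j(x_i^*)\ne\hat\delta^i_j\}$ be the set of flipped bit positions. Each flipped bit $(i,j)$ contributes a signed $2^j$ to $x_i^*-\hat x_i$ and a unit to $\|\bm\delta^i-\hat{\bm\delta}^i\|_1$. The idea is to use an interpolation parameter $z_i\in[0,1]$ calibrated to the flipped bits so that the $\bm\omega$ values can be chosen small in $\ell_1$ while still satisfying the box constraints. The assumption $B_i\ge 3$ (hence $N_i\ge 1$) ensures the existence of at least one high-order bit, which provides room to ``absorb'' the difference $x_i^*-\hat x_i$ via a single fractional $z_i$ without saturating the box constraints; it also rules out the degenerate case $B_i\in\{1,2\}$ where the binary representation lacks enough freedom. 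After assembling the coordinatewise assignments and verifying feasibility, the aggregate inequality follows by summing over $i$.

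The main obstacle is the aggregate bound $\sum_i(\omega_i^+ + \omega_i^-)\le\sum_i\|\bm\delta^i(x_i^*)-\hat{\bm\delta}^i\|_1$. This cannot be established pointwise, because for some integer $\bm x^*$ the Hamming distance $\|\bm\delta^i-\hat{\bm\delta}^i\|_1$ can exceed $|x_i^*-\hat x_i|$ (e.g., flipping both the low- and high-order bits in opposite directions). The argument must therefore use the convex-combination freedom in choosing $(\bm\omega^\pm,\bm z)$ together with the concavity of $-\|\cdot\|_1$, so that even when a direct match fails in a single direction, an averaged decomposition over judiciously chosen corner points of $\mathcal{B}$ yields the desired bound. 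Handling the sign patterns of flipped bits carefully and showing that the chosen $z_i$ remains in $[0,1]$ under all cases of $\hat x_i\in[0,B_i]$ with $B_i\ge 3$ will be the most delicate part of the proof.
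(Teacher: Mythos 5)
Your skeleton does follow the same route as the paper's proof: reduce the inclusion to showing that each base point $(\bm x^*, f_L(\bm x^*))$ with $\bm x^*$ integral lies in $\conv(E_\Lambda^s)$, and certify membership through the extended formulation of \Cref{prop:conv_gen_lshaped}. However, there are two concrete problems. First, your aggregate inequality points the wrong way: since the coefficient $L-\Q_s(\hatx)$ in \eqref{mip_rep_cut} is nonpositive, making the small value $f_L(\bm x^*)$ feasible requires auxiliaries with $\sum_i(\omega_i^+ + \omega_i^-)\ \geq\ \sum_i\|\bm\delta^i(x_i^*)-\hat{\bm\delta}^i\|_1$, not $\leq$; you need the $\omega$'s to be large. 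Second, you never actually carry out the construction, and the inequality you would need cannot be obtained the way you plan. For fixed $x_i$, the largest value of $\omega_i^+ + \omega_i^-$ compatible with \eqref{mip_rep_omega} and $z_i\in[0,1]$ is $\bigl[\hat x_i(B_i-x_i)+(B_i-\hat x_i)x_i\bigr]/B_i$, attained at $z_i=x_i/B_i$; this is the linear interpolation between $\hat x_i$ at $x_i=0$ and $B_i-\hat x_i$ at $x_i=B_i$, and it can fall strictly below the Hamming distance. For $B_i=3$, $\hat x_i=1$, $x_i=2$ the Hamming distance is $\|(0,1)-(1,0)\|_1=2$ while the achievable maximum is $5/3$; for $B_i=8$, $\hat x_i=7$, $x_i=8$ the Hamming distance is $4$ while at the box endpoint no fractional $z_i$ helps and the maximum is $B_i-\hat x_i=1$. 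So the ``averaged decomposition over corner points'' that you defer to as the delicate step cannot close the gap: no choice of auxiliaries, and no convex combination within the box, places $(\bm x^*,f_L(\bm x^*))$ in $\conv(E_\Lambda^s)$ at these points.

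For comparison, the paper's proof replaces your target by the per-coordinate bound $\|\bm\delta^i-\hat{\bm\delta}^i\|_1\leq\max\{B_i-\hat x_i,\hat x_i\}$ (its Claim~1, proved by a case analysis in which the hypothesis $B_i\geq 3$ enters). That combinatorial claim is correct, but the reduction to it rests on the assertion that $(\bm x,\ \Q_s(\hatx)+(L-\Q_s(\hatx))\sum_i\max\{B_i-\hat x_i,\hat x_i\})\in\conv(E_\Lambda^s)$ for every $\bm x$, certified by setting $z_i\in\{0,1\}$; that assignment only yields $\omega_i^++\omega_i^-=|x_i-\hat x_i|$, and the examples above show the larger value $\max\{B_i-\hat x_i,\hat x_i\}$ is not attainable at interior or near-$\hatx$ coordinates. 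In other words, neither your plan nor the appendix argument establishes the per-coordinate inequality that is actually needed, namely $\|\bm\delta^i-\hat{\bm\delta}^i\|_1\leq\bigl[\hat x_i(B_i-x_i)+(B_i-\hat x_i)x_i\bigr]/B_i$, and that inequality is false in general. Before attempting to repair the write-up, you should pin down exactly which per-coordinate bound suffices and test it against these boundary cases rather than deferring the hard step.
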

\begin{proof}
See Appendix \ref{pf:lam_l}. \qed
\end{proof}
When $B_i \leq 2$ for some $i\in[n]$, the inclusion of the two sets may not hold, as shown in the following example. 
\begin{example}
Consider the local recourse function $\Q_s(x) = \min\{y:y\geq x, y\in\Ze\}$ for $x\in\{0,1,2\}$. Let $L=0$. When $\hat{x}=1$, we can derive a $\Lambda$-shaped cut $\theta \geq 1 + (0-1)|x-\hat{x}|$. For this cut, we have 
$$\conv(E_\Lambda^s) = \{(x,\theta): \theta \geq 1 -(\omega^++\omega^-), \omega^+-\omega^-=x-\hat{x}, 0\leq \omega^+\leq (2-1)z, 0\leq \omega^-\leq 1-z, z\in[0,1] \}.$$
It is easy to check that $\theta \geq 1-\max\{\hat{x},B-\hat{x}\}=0$ when $x=2$. Meanwhile, binarizing $x$ as $x=\delta_0 + 2\delta_1$, we can derive a cut $\theta \geq 1 + (0-1)(|\delta_0-1|+|\delta_1-0|)$. When $x=2$, we have $\theta \geq -1$. There exists a point $(2, -1)\in\conv(E_L^s)$ that is not in $\conv(E_\Lambda^s)$.\qedA
\end{example}
This example indicates that binarization generally provides stronger initial cuts before the cut-strengthening procedure. It is more effective as fewer additional binary variables are required. In addition, the strengthening strategy introduced in the previous section can be applied directly after binarization.

We close this subsection by providing the convex hull of the set $E_L^s$. 
\begin{restatable}{proposition}{convbox}
\label{prop:conv_box}
Suppose that integer $B_i = 2^{j_{1i}}+\cdots+2^{j_{\ell i}}+2^{N_i}$, and set $J_i = \{j_{1i},\ldots,j_{\ell i},N_i\}$ for each $i\in [n]$. Then, 
\begin{align*}
\conv(E_L^s) = \left\{
(\bm x,\theta)\in \times_{i\in[n]}[0, B_i]\times\Re:
\begin{aligned}
&\exists\bm\delta \in [0,1]^{N_1+1}\times\cdots\times[0,1]^{N_n+1},\eqref{bi_lshaped},\\
&x_i=\sum_{j\in [0,N_i]}2^j \delta^i_j \leq B_i,  \forall i\in[n],\\
&\delta^i_r+\sum_{\tau\in J_{ir}}\delta^i_{\tau}\leq |J_{ir}|,\forall r\in\{0,1,\ldots, N_i\}\backslash J_i,\forall i\in[n] 
\end{aligned}\right\},
\end{align*}
where we let $J_{ir}=\{\ell\in J_i:\ell > r\}$.
\end{restatable}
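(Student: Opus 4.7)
The plan is to prove this via an extended formulation in the $\bm{\delta}$-space. Since $\hat{\bm{\delta}}$ is binary, each $|\delta^i_j-\hat{\delta}^i_j|$ is affine in $\delta^i_j\in[0,1]$, so the L-shaped cut \eqref{bi_lshaped} becomes a linear inequality in $\bm{\delta}$. Let $V_i=\{\bm{\delta}^i\in\{0,1\}^{N_i+1}:\sum_j 2^j\delta^i_j\leq B_i\}$ and $Q_i=\{\bm{\delta}^i\in[0,1]^{N_i+1}:\delta^i_r+\sum_{\tau\in J_{ir}}\delta^i_\tau\leq|J_{ir}|,\ \forall r\in[0,N_i]\setminus J_i\}$. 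Then $E_L^s$ equals the projection onto $(\bm{x},\theta)$ of the polyhedron $\{(\bm{x},\theta,\bm{\delta}):\bm{\delta}\in\prod_{i\in[n]} V_i,\ x_i=\sum_j 2^j\delta^i_j,\ \eqref{bi_lshaped}\}$, while the right-hand side of the proposition is the projection of the same system with each $V_i$ replaced by $Q_i$. The claim therefore reduces to proving $\conv(V_i)=Q_i$ for each $i$, followed by a routine disaggregation of convex combinations in the lifted space.

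For the equality of 0-1 points, $V_i=Q_i\cap\{0,1\}^{N_i+1}$: any binary $\bm{\delta}^i$ with $\delta_r=1$ and $\delta_\tau=1$ for all $\tau\in J_{ir}$ satisfies $\sum_j 2^j\delta_j\geq 2^r+\sum_{\tau\in J_{ir}}2^\tau=2^r+B_i-\sum_{\tau\in J_i,\tau<r}2^\tau\geq B_i+1$, so the cover inequalities are valid on $V_i$; conversely, if some $\bm{\delta}\in Q_i\cap\{0,1\}^{N_i+1}$ has $\sum_j 2^j\delta_j>B_i$, then at the highest bit $s$ where $\bm{\delta}$ and the binary expansion of $B_i$ disagree one has $\delta_s=1$, $s\notin J_i$, and $\delta_\tau=1$ for all $\tau\in J_{is}$, violating the inequality at $r=s$. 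The main obstacle is then to show that $Q_i$ is an integer polyhedron, which I plan to establish by proving total unimodularity of the cover constraint matrix via the Ghouila--Houri criterion. After the substitution $y_j=\delta_j$ for $j\notin J_i$ and $y_\tau=1-\delta_\tau$ for $\tau\in J_i$, each cover inequality becomes $y_r-\sum_{\tau\in J_{ir}}y_\tau\leq 0$, giving a coefficient matrix with a single $+1$ in column $r$ and $-1$'s in the columns of $J_{ir}$. For any subset of rows $R=\{r_1<r_2<\cdots<r_k\}$, I use the alternating partition $R_1=\{r_1,r_3,\ldots\}$, $R_2=\{r_2,r_4,\ldots\}$: each column $j\notin J_i$ receives a nonzero contribution from at most one row in $R$, and for a column $\tau\in J_i$ the rows of $R$ contributing $-1$ are precisely those $r\in R$ with $r<\tau$, a prefix of $R$ which the alternating partition balances within one. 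Since total unimodularity is preserved under appending box constraints, the full defining system of $Q_i$ is TU and $Q_i$ is integer, so $Q_i=\conv(V_i)$.

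With $\conv(V_i)=Q_i$ in hand, I finish as follows. Take any $(\bm{x},\theta)$ in the right-hand side with witness $\bm{\delta}\in\prod_i Q_i$, write each $\bm{\delta}^i=\sum_{k}\lambda^i_k\bm{w}^{i,k}$ with $\bm{w}^{i,k}\in V_i$, and form the product convex combination $\bm{\delta}=\sum_{\bm{k}}\lambda^{\bm{k}}\bm{w}^{\bm{k}}$ where $\lambda^{\bm{k}}=\prod_i\lambda^i_{k_i}$ and $\bm{w}^{\bm{k}}$ concatenates $\bm{w}^{i,k_i}$ across $i$. Because the map $\bm\delta\mapsto\bm x=(\sum_j 2^j\delta^i_j)_i$ and the right-hand side of \eqref{bi_lshaped} are both affine in $\bm\delta$, one obtains $\bm x=\sum_{\bm{k}}\lambda^{\bm{k}}\bm x^{\bm{k}}$, and the linear bound in \eqref{bi_lshaped} evaluated at $\bm\delta$ equals the analogous convex combination of its values at the $\bm w^{\bm k}$'s. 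Distributing the nonnegative slack yields $\theta=\sum_{\bm k}\lambda^{\bm k}\theta^{\bm k}$ with each $\theta^{\bm k}$ above the L-shaped bound at $\bm w^{\bm k}$, exhibiting $(\bm x,\theta)\in\conv(E_L^s)$. The reverse inclusion is immediate by convexity of the right-hand side together with $V_i\subseteq Q_i$.
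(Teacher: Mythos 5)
Your proof is correct, and at the structural level it follows the same route as the paper: decompose $E_L^s$ coordinate-wise into the knapsack sets $V_i=\{\bm\delta^i\in\{0,1\}^{N_i+1}:\sum_{j} 2^j\delta^i_j\le B_i\}$ (the paper's $D_i$), characterize $\conv(V_i)$ by the cover inequalities, and recombine via a product of convex combinations. The difference lies in how the key lemma $\conv(V_i)=Q_i$ is established. The paper simply invokes the literature (Laurent 1992; Gupte et al.\ 2013) for the convex hull of a $0$--$1$ knapsack set with power-of-two coefficients, whereas you prove it from scratch: first $V_i=Q_i\cap\{0,1\}^{N_i+1}$, with validity via $2^r+\sum_{\tau\in J_{ir}}2^\tau\ge B_i+1$ and the converse via the highest bit at which $\bm\delta$ disagrees with the binary expansion of $B_i$; then integrality of $Q_i$ via total unimodularity of the complemented cover system. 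Your Ghouila--Houri argument is sound --- after complementing the variables indexed by $J_i$, each column outside $J_i$ meets at most one selected row, and each column $\tau\in J_i$ meets exactly the prefix $\{r\in R:r<\tau\}$, which the alternating partition balances within one --- and TU survives appending the box constraints. You are also more explicit than the paper about the recombination step (the product convex combination, the affineness of \eqref{bi_lshaped} in $\bm\delta$ given that $\hat{\bm\delta}$ is binary, and the distribution of the $\theta$-slack), which the paper compresses into the phrase ``according to the decomposition structure of $E_L^s$.'' The net effect is a self-contained and slightly more careful proof where the paper relies on an external citation; both are valid.
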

\begin{proof}
See Appendix \ref{pf:conv_box}. \qed
\end{proof}

\subsection{Mixed integer first-stage decisions}\label{subsec:mixed_integer}
For mixed integer first-stage decisions, according to \cite{zou2019stochastic}, binarization may still remain an option and the number of binary variables required can be bounded in terms of the desired approximation accuracy. However, this approach may introduce an excessive number of additional binary variables and potentially fail to achieve an exact optimal solution due to binarization. Given these issues, we propose solving the problem directly using ReLU Lagrangian cuts.

Similar to the L-shaped and $\Lambda$-shaped cuts used for purely binary and integer variables, we first develop an initial cut. According to \Cref{thm:gen_lag_dual}, for any feasible first-stage decisions $\hatx$, there exists a $\rho^*>0$ such that $(-\rho^*\bm{1}, -\rho^*\bm{1})$ is optimal for the dual problem. This allows us to derive a cut of the form $\theta\geq \Q_s(\hatx) - \rho^* ||\bm x-\hatx||_1$. If the recourse function is Lipschitz continuous, we use the Lipschitz constant as $\rho^*$ and derive a reverse norm cut. Otherwise, $\rho^*$ may need to be adjusted based on the given $\hatx$, which can be determined using binary search.

When the second-stage feasible region has a special structure, such as a knapsack-constrained set, we can determine a valid $\rho^*$ by following the procedure described in the proof of \Cref{thm:gen_lag_dual}. First, we relax the first-stage feasible region to a larger set $S$ with fewer constraints, which facilitates the identification of extreme points $\{(\bm x^k, \bm y^k, \omega_k)\}_{k \in K}$ of $\conv(F_s(S))$ with set $F_s(S) = \{(\bm x, \bm y, \omega) \in \Re^n \times \Ze^{m_1} \times \Re^{m_2} \times \Re: \bm T^s \bm x + \bm W^s \bm y \geq \bm h^s, \bm x \in S\}$. We then compute the distance $d \leq \min\{||\bm x^k - \hatx||_1 : k \in K, \bm x^k \neq \hatx\}$ and let
\begin{equation}\label{closed_form_rho}
\rho^* = \frac{\Q_s(\hatx)-L}{d}.
\end{equation}
Letting $\bm\pi^+=\bm\pi^-=-\rho^*\bm{1}$, we find an optimal solution to $\sup_{\bm\pi^+,\bm\pi^-\in\Re^n}\inf_{\bm x\in S}L_s(\bm x, \bm\pi^+,\bm\pi^-;\hatx)$. According to \Cref{coro:expand_feas}, it is also optimal for the original dual problem \eqref{gen_lag_dual_ori}. In this way, we can derive an initial ReLU Lagrangian cut:
\begin{equation}\label{mi_init_cut}
\theta \geq \Q_s(\hatx) - \rho^* \left(\sum_{i\in[n]}(x_i-\hat{x}_i)^+ + \sum_{i\in[n]}(x_i-\hat{x}_i)^-\right)=\Q_s(\hatx) - \rho^* ||\bm x-\hatx||_1.
\end{equation}
The following example illustrates how to find such $\rho^*$ using this approach.
\begin{example}\label{eg:mi}
Let $\Q_s(\bm x)=\min\{y:y \geq x_1+x_2, y\in \Ze\}$, where the domain $\barx=\{\bm x\in\Ze\times\Re: 0\leq x_1\leq 2, 0\leq x_2\leq 2\}$, $\hatx = (1,1)^\top$ and $L=0$. We relax set $\barx$ to be set $S=\{\bm x\in\Re^2:||\bm x-\hatx||_1 \leq 2\}$. Then, we have $F_s(S)=\{(\bm x, y, \omega)\in \Re^2\times\Ze\times \Re:  ||\bm x-\hatx||_1 \leq 2, ||\bm x-\hatx||_1 \leq \omega, y\geq x_1+x_2\}=\{(\bm x, y, \omega)\in \Re^2\times\Ze\times \Re: x_1+x_2-2\leq\omega, x_1-x_2\leq \omega, x_2-x_1\leq \omega, -x_1-x_2+2\leq\omega, 0\leq\omega\leq 2, y\geq x_1+x_2\}$. This is an integral polyhedron, i.e., $\conv(F_s(S)) = \{(\bm x, y, \omega): x_1+x_2-2\leq\omega, x_1-x_2\leq \omega, x_2-x_1\leq \omega, -x_1-x_2+2\leq\omega, 0\leq\omega\leq 2, y\geq x_1+x_2\}$ with extreme points $(-1, 1, 0, 2)^\top, (1, -1, 0, 2)^\top, (1,3,4, 2)^\top, (3,1,4,2)^\top$ and $(1,1,2,0)^\top$. Then $d=\min\{||\bm x^k-\hatx||_1:k\in K, \bm x^k\neq \hatx\}=2$. We let $\rho = \frac{2(2-0)}{2}=2$. The initial ReLU Lagrangian cut we can derive is $\theta \geq 2-2||\bm x-\hatx||_1$.
\qedA
\end{example}

To strengthen this initial ReLU cut, we lift the first-stage feasible region $\barx$ to a higher-dimensional space by considering the set $\Omega_{\hatx}:=\{(\bm\omega^+, \bm\omega^-): \eqref{mip_rep_omega},\eqref{mip_rep_z}, \bm x\in \barx\}$. Then, we redefine the recourse function $\Q_s(\bm\omega^+, \bm\omega^-) := \Q_s(\hatx+\bm\omega^+-\bm\omega^-)$. With this transformation, the ReLU Lagrangian cuts generated at $\hatx$ become Lagrangian cuts generated at $(\bm{0},\bm{0})$ for the lifted epigraphical set $\epi_{\Omega_{\hatx}}(\Q_s)$. Therefore, the properties of the Lagrangian cuts discussed in \Cref{sec_ReLU_cuts} can be directly applied to the transformed problem.

Based on \Cref{polar}, we can define a ``facet-defining" cut generated at $\hatx$ whose coefficients correspond to an extreme point of the set $\pixhat$ (recall that set $\pixhat$ is defined in \Cref{def_pihat}). It also implies that in the lifted space, the number of affinely independent points at which a ReLU Lagrangian cut is tight is bounded by the dimension of the lifted feasible region $\Omega_{\hatx}$. This further explains why certain ReLU Lagrangian cuts can be stronger than augmented Lagrangian cuts, where the latter, due to the symmetry of the $\ell_1$-norm, may fail to be ``facet-defining" in the lifted space. 
\begin{example}
In Example \ref{alc_vs_ReLU}, let $\hatx=(1,2)^\top$. We have $(1,2)^\top= \hatx + (0,0)^\top- (0,0)^\top$, $(1,0)^\top=\hatx+(0,0)^\top-(0,2)^\top$, $(0,1)^\top = \hatx + (0,0)^\top-(1,1)^\top$, $(0,2)^\top=\hatx + (0,0)^\top-(1,0)^\top$, $(0,3)^\top = \hatx + (0,1)^\top-(1,0)^\top$, $(1,4)^\top=\hatx +(0,2)^\top-(0,0)^\top$, $(2,3)^\top=\hatx+(1,1)^\top-(0,0)^\top$, $(2,2)^\top=\hatx+(1,0)^\top-(0,0)^\top$, $(2,1)^\top=\hatx+(1,0)^\top-(0,1)^\top$. The first-stage feasible region can transformed to the set $\Omega_{\hatx}=\{(0,0,0,0)^\top, (0,0,0,2)^\top, (0,0,1,1)^\top, (0,0,1,0)^\top, (0,1,1,0)^\top, (0,2,0,0)^\top, (1,1,0,0)^\top, (1,0,0,0)^\top$, $(1,0,0,1)^\top\}$ in the lifted space. The cut \eqref{lifted_facet_defining} is facet-defining as it passes through five points $(0,0,0,0,10)^\top, (0,0,0,2,5)^\top, (0,0,1,0,2)^\top, (0,2,0,0,1)^\top,(1,0,0,0,4)^\top$ that are affinely independent in the convex hull of the epigraph that is 5-dimensional. 
Meanwhile, $\pixhat=\{(\pi_1^+,\pi_2^+, \pi_1^-,\pi_2^-): \pi_1^-+\pi_2^- \leq -7, \pi_1^- \leq -8, \pi_2^+ + \pi_1^-\leq -9, 2\pi_2^- \leq -5, \pi_2^- \leq -\frac{5}{2}, \pi_2^+ \leq -\frac{9}{2}, 2\pi_2^+ \leq -9, -\pi_2^- \leq -5, \pi_1^+ \leq -6,\pi_1^+ +\pi_2^+ \leq -7\}=\{(\pi_1^+,\pi_2^+, \pi_1^-,\pi_2^-):\pi_1^+ \leq -6, \pi_2^+ \leq -\frac{9}{2}, \pi_1^- \leq -8, \pi_2^- \leq \frac{5}{2}\}$. The vector of the coefficients of \eqref{lifted_facet_defining} is the only extreme point of $\pixhat$.

For augmented Lagrangian cuts, the cut coefficients are selected from a more restricted set $\pixhat \cap \{(\pi_1^+, \pi_2^+, \pi_1^-, \pi_2^-): \pi_1^+ + \pi_1^- = \pi_2^+ + \pi_2^- \leq 0\}$. It can be shown that the extreme points of set $\pixhat$ are eliminated by the extra constraints, and the restricted set contains two new extreme points that correspond to the coefficients of cuts \eqref{ext_agc_1} and \eqref{ext_agc_2}. These new extreme points satisfy three linearly independent constraints in $\pixhat$ with equality, and the corresponding cuts are tight at only four affinely independent points in the convex hull of the epigraph.
\qedA
\end{example}

This example suggests that the cut coefficients $(\bm\pi^+, \bm\pi^-)$ of a strongest augmented Lagrangian cuts have $n-1$ linearly independent equality constraints from the set $\{(\bm\pi^+, \bm\pi^-):\pi_i^+ + \pi_i^- = \pi_j^+ + \pi_j^-\leq 0, \forall i,j\in[n]\}$ and require $n+1$ linearly independent inequalities from $\pixhat$ that can be active for the cut coefficients $(\bm\pi^+, \bm\pi^-)$. Therefore, the number of linearly independent points in the epigraph at which this augmented Lagrangian cut is tight is at most $n+1$, which is clearly not facet-defining. Hence, more augmented Lagrangian cuts may be needed to fully recover the epigraph, compared to ReLU Lagrangian cuts.

Similar to the approach being applied to purely binary first-stage decisions, we can enhance the initial cuts \eqref{mi_init_cut} using the following strengthening problem in the lifted space:
\begin{align*}
\max_{\bm\eta^+,\bm\eta^-} \bigg\{(\bm a^+)^\top\bm\eta^+ + (\bm a^-)^\top\bm\eta^-:
 \Q_s(\hatx) \leq \min_{\bm x\in\barx} \Q_s(\bm x)+\sum_{i\in[n]}(\rho^* - \eta^+_i)(x_i-\hat{x}_i)^+ +\sum_{i\in[n]}(\rho^* - \eta^-_i)(x_i-\hat{x}_i)^-\bigg\},
\end{align*}
which is equivalent to
\begin{align*}
\max_{\bm\eta^+,\bm\eta^-} \bigg\{ &(\bm a^+)^\top\bm\eta^+ + (\bm a^-)^\top\bm\eta^-:
 \Q_s(\hatx) \leq \min_{\bm x, \bm y, \bm\omega^+,\bm\omega^-,\bm z} \bigg\{(\bm q^s)^\top \bm y+\sum_{i\in[n]}(\rho^* - \eta^+_i)\omega_i^+ +\sum_{i\in[n]}(\rho^* - \eta^-_i)\omega_i^-:\\
&\bm A\bm x\geq \bm b,
\bm T^s \bm x+\bm W^s\bm y \geq \bm h^s, \omega_i^+ -\omega_i^- = x_i-\hat{x}_i, \forall i\in[n],
0\leq \omega_i^+ \leq (B_i-\hat{x}_i) z_i, \forall i\in[n],\\
&0\leq \omega_i^- \leq \hat{x}_i(1- z_i), \forall i\in[n],
\bm x\in \Ze^{n_1}\times\Re^{n_2},
 \bm y\in \Ze^{m_1}\times\Re^{m_2},
 \bm z\in\{0,1\}^n\bigg\}\bigg\}.
\end{align*}
Relaxing the integrality constraints,
taking the dual of the inner minimization problem and invoking LP strong duality yields
\begin{equation}\label{stren_mixed_integer}
\begin{aligned}
\max_{\bm\eta^+,\bm\eta^-} 
\bigg\{&(\bm a^+)^\top\bm\eta^+ + (\bm a^-)^\top\bm\eta^-:\bm b^\top \bm\tau + (\bm h^s)^\top \bm\sigma -\hatx^\top \bm\gamma + \hatx^\top \bm\psi+\bm{1}^\top \bm\kappa \geq \Q_s(\hatx),\\
&
\bm A^\top\bm\tau + (\bm T^s)^\top\bm\sigma -\bm\gamma = \bm{0},
(\bm W^s)^\top\bm\sigma = \bm q^s,
\bm\gamma +\bm\phi + \bm\eta^+ \leq \rho^*\bm{1},\bm\gamma +\bm\psi + \bm\eta^- \leq \rho^*\bm{1}, \\
&
(\hat{x}_i-B_i)\phi_i + \hat{x}_i\psi_i + \kappa_i \leq 0,  \forall i\in[n],
\bm \tau\geq \bm{0},\bm \sigma \geq \bm{0},
\bm \phi\leq \bm{0}, \bm \psi \leq \bm{0}, \bm\kappa \leq \bm{0}\bigg\}.
\end{aligned}
\end{equation}
If this problem has an optimal solution with $\bm\eta^+ = \hat{\bm\eta}^+$ and $\bm\eta^- = \hat{\bm\eta}^-$, we can derive a stronger cut
\begin{equation*}
\theta \geq \Q_s(\hatx) + \sum_{i\in[n]}(\hat{\eta}_i^+-\rho^*)(x_i-\hat{x}_i)^+ + \sum_{i\in[n]}(\hat{\eta}_i^--\rho^*)(x_i-\hat{x}_i)^-.
\end{equation*}

The strategies described in \Cref{sec_stren_cut} can be used to address the unboundedness. Other techniques, such as objective cuts, can also be used to improve the LP-based strengthening problem. Below is an illustrative example.
\begin{example}
We use the strengthening problem to improve the ReLU Lagrangian cuts derived in Example \ref{eg:mi}. We solve the following relaxed strengthening problem:
\begin{align*}
\min_{\bm\eta^+,\bm\eta^-} \bigg\{&\sum_{i\in[2]}\eta_i^+ + \sum_{i\in[2]}\eta_i^-:
2 \leq \min_{\bm x,y} \{y + \sum_{i\in[2]}(2-\eta_i^+)(x_i-\hat{x}_i)^++ \sum_{i\in[2]}(2-\eta_i^-)(x_i-\hat{x}_i)^-:
y\geq x_1+x_2,\\
&0\leq x_1\leq 2, 0\leq x_2\leq 2\}\bigg\},
\end{align*}
which is equivalent to
\begin{align*}
\min_{\bm\eta^+,\bm\eta^-} \bigg\{&\sum_{i\in[2]}\eta_i^+ + \sum_{i\in[2]}\eta_i^-:
2\tau_1+2\tau_2-\gamma_1-\gamma_2+\psi_1+\psi_2+\kappa_1+\kappa_2 \geq 2,
\tau_i -\sigma -\gamma_i \leq 0, \forall i\in[2],\\
&\sigma = 1,\gamma_i +\phi_i + \eta_i^- \leq 2, \forall i\in[2],-\gamma_i +\psi_i + \eta_i^-\leq 2, \forall i\in[2],
-\phi_i+\psi_i +\kappa_i \leq 0, \forall i\in[2], \\
&\sigma \geq 0,
\bm\tau\leq \bm{0}, \bm\phi\leq \bm{0}, \bm\psi\leq \bm{0}, \bm\kappa \leq \bm{0}\bigg\}.
\end{align*}
The optimal solution is $\hat{\bm\eta}^+ = (3, 3)^\top$ and $\hat{\bm\eta}^- = (1, 1)^\top$, which corresponds to the cut $\theta \geq 2 + \sum_{i\in[n]} (x_i-\hat{x}_i)^+ -  \sum_{i\in[n]} (x_i-\hat{x}_i)^-$. This, in fact, is a facet-defining cut in the lifted space.
\qedA
\end{example}

\begin{algorithm}[htbp]
\caption{ReLU Lagrangian cuts for two-stage SMIP}
\begin{algorithmic}[1]\label{alg:mip}
\State \textbf{Input:} 
Master problem: $v^*=\min_{\bm{x},\theta}\{\bm c^\top \bm x + \theta: \bm A\bm x\geq \bm b, \bm x \in \barx\}$
and local recourse problems \eqref{loc_rec}
\State \textbf{Output:}
Optimal solution $x^*$
\State \textbf{Initialize:}
$lb\gets -\infty, ub\gets+\infty, iter \gets 0$
\If{$\barx \subseteq \{0,1\}^n$}
\State Solve it using Algorithm \ref{alg:binary}
\ElsIf{$\barx \subseteq \mathbb{Z}^n$}
\State Binarize the first-stage variables and solve it using Algorithm \ref{alg:binary}
\Else
\While{stopping criterion not met}
\State Solve the master problem, obtain an optimal solution $\hat{\bm{x}}$, and set $lb \gets v^*$
\For{$s \in [N]$}
\State Solve the local recourse problem to get $\Q_s(\hat{\bm{x}})$, compute $\rho^*$ using binary search or the closed form \eqref{closed_form_rho}, and update the strengthening problem \eqref{stren_mixed_integer} and solve it
\If{\eqref{stren_lp_ngc} is optimal}
\State Let $(\hat{\bm{\eta}}^+_s, \hat{\bm{\eta}}^-_s)$ be an optimal solution. $(\bm{\pi}_s^+)^{iter} \gets  \hat{\bm \eta}_s^+ -\rho^*\bm{1}$, $(\bm{\pi}_s^-)^{iter} \gets  \hat{\bm \eta}_s^- -\rho^*\bm{1}$
 \Else 
 \State $(\bm{\pi}_s^+)^{iter} \gets  -\rho^*\bm{1}$, $(\bm{\pi}_s^-)^{iter} \gets  -\rho^*\bm{1}$
\EndIf
\EndFor
\State $q_{iter} \gets \sum_{s\in [N]}\Q_s(\hatx)$, $(\bm\pi^+)^{iter} \gets \sum_{s\in [N]}(\bm{\pi}_s^+)^{iter}$, $(\bm\pi^-)^{iter} \gets \sum_{s\in [N]}(\bm{\pi}_s^-)^{iter}$
\State Add the following constraints to the master problem
\begin{equation*}
\begin{aligned}
&\theta \geq q_{iter} + \sum_{i\in[n]}(\pi^+_i)^{iter}\omega^+_i(\hat{x}_i) + \sum_{i\in[n]}(\pi^-_i)^{iter}\omega^-_i(\hat{x}_i) ,\omega^+_i(\hat{x}_i) - \omega^-_i(\hat{x}_i) = x_i-\hat{x}_i,\\
&0\leq \omega^+_i(\hat{x}_i)\leq (B_i-\hat{x}_i)z_i(\hat{x}_i), 0\leq \omega^-_i(\hat{x}_i)\leq \hat{x}_i(1-z_i(\hat{x}_i)), z_i(\hat{x}_i)\in\{0,1\}, \forall i\in[n].
\end{aligned}
\end{equation*}
\State $ub \gets \min\left\{ub, \bm{c}^\top \hat{\bm{x}} + q_{iter})\right\}$, $iter \gets iter + 1$
\EndWhile
\EndIf
\end{algorithmic}
\end{algorithm}

\section{Numerical Experiments}\label{sec_numerical}
In this section, we compare the performance of ReLU Lagrangian cuts and other existing cut families in the literature through numerical studies on two-stage and multistage models. All numerical experiments are conducted in Python using Gurobi version 11.0.1 on virtualized Intel Xeon Cascade Lake CPUs running at 2.9 GHz, with 61 GB RAM, under a Linux operating system.

\subsection{Two-stage models}
We test the performance of four cut combinations for solving two-stage SMIPs with purely binary first-stage decisions and mixed integer second-stage decisions. The four combinations are: (i) integer L-shaped cuts (L); (ii) Benders cuts combined with integer L-shaped cuts (B); (iii) strengthened Benders cuts \citep{zou2019stochastic} combined with integer L-shaped cuts (SB); and (iv) ReLU Lagrangian cuts (R), generated following the procedure described in \Cref{sec_binary}.

When implementing the cuts, we begin by adding Benders cuts to solve the LP relaxation to optimality at the root node. Next, we use Gurobi's lazy callback function to solve the problem through a combination of cuts within the branch-and-cut framework.

\noindent\textbf{Experiment 1.} In this experiment, we consider the stochastic server location problem (SSLP) \citep{ntaimo2005million} formulated as:
\begin{align*}
    \min_{\bm x} \left\{\sum_{j\in J} c_jx_j + \frac{1}{N}\sum_{s\in [N]}\Q_s(\bm x):\sum_{j\in J}x_j \leq v,
    x_j \in \{0,1\}, \forall j\in J\right\}
\end{align*}
where for each scenario $s\in [N]$, the local recourse function is defined as
\begin{align*}
\Q_s(\bm x):=\min_{\bm y} \ &\sum_{j\in J}q_{0j}y_{0j} - \sum_{i\in I}\sum_{j\in J}q_{ij}y_{ij}\\
    \textrm{s.t.} \ &\sum_{i\in I}d_{ij}y_{ij}-y_{0j} \leq ux_j, 
    \sum_{j\in J} y_{ij} = h_i^s, 
    y_{ij}\in\{0,1\}, y_{0j} \geq 0, \quad \forall i\in I, j\in J.
\end{align*}
The model parameters are generated as follows: For the first stage, the upper bound $v$ on the total number of servers is set to $\lceil |J|/3\rceil$, the cost $c_i$ of locating a server at the location $j$ follows a discrete uniform distribution $U[40, 80]$. For the second stage, the demand $d_{ij}$ of client $i\in I$ from the server located at $j\in J$ and the revenue $q_{ij}$ from client $i$ served by the server at location $j$, both are drawn from a discrete uniform distribution $U[0, 25]$. The server capacity $u$ is 50. The overflow rate $q_{0j}$ for server $j\in J$ is 1000. Client availability $\tilde{h}_i$ is stochastic and follows a Bernoulli distribution with a success rate 0.7.

We perform numerical studies on SSLP with 20 and 30 locations and clients, considering 10, 50, 100, and 200 scenarios. The results are displayed in Table \ref{tbl:sslp}. The algorithm terminates when the gap, defined as the difference between the best upper and lower bounds divided by absolute value of the best lower bound,  is less than $0.01\%$ within a time limit of 1 hour. 


Columns 1 and 2 of Table \ref{tbl:sslp} represent the total number of locations and clients, respectively. Column 3 shows the number of scenarios, $N$. Columns 4 and 5 are the best lower and upper bounds obtained from Gurobi. Column 6 shows the gap. Columns 7 and 8 display the solution time and the number of nodes explored during the branch-and-cut procedure, respectively. 

Our results show that all four methods solve all the instances within the time limit. The ReLU Lagrangian cuts achieve the shortest solution time. Additionally, this method significantly reduces the number of nodes explored. For the other methods, the number of nodes explored is similar. This can be because methods L, B, and SB rely heavily on the integer L-shaped cuts, which are naive ReLU Lagrangian cuts. This implies that properly strengthening ReLU Lagrangian cuts can be beneficial in solving two-stage stochastic integer programs. 


\begin{table}[htbp]
 \vspace{-10pt}
\caption{Numerical results of the SSLP instances}\label{tbl:sslp}
\centering\setlength{\tabcolsep}{3.0pt}
\footnotesize
\begin{minipage}{0.48\textwidth}
\flushright
\begin{tabular}{lllllrrrr}
\hline
$|J|$& $|I|$& $N$               & cut & lb    & ub    & \begin{tabular}[c]{@{}c@{}}Gap \\ (\%)\end{tabular} & \begin{tabular}[c]{@{}c@{}}time \\ (s)\end{tabular} & node \\
\hline
\multirow{4}{*}{20} & \multirow{4}{*}{20} & \multirow{4}{*}{10}&L&-25.4&-25.4&0.0&12.6&26\\
&&&B&-25.4&-25.4&0.0&16.1&52\\
&&& \textbf{R} & \textbf{-25.4} & \textbf{-25.4} & \textbf{0.0} & \textbf{2.8}   & \textbf{7}       \\
&&&SB&-25.4&-25.4&0.0&16.4&52\\  
 \hline
\multirow{4}{*}{30} & \multirow{4}{*}{30} & \multirow{4}{*}{10}&L&-34.4&-34.4&0.0&40.8&67\\
&&& B&-34.4& -34.4& 0.0          & 31.3           & 63               \\
&&& \textbf{R} & \textbf{-34.4} & \textbf{-34.4} & \textbf{0.0} & \textbf{13.3}   & \textbf{14}      \\
&&& SB         & -34.4          & -34.4          & 0.0          & 31.7           & 63               \\
 \hline
\multirow{4}{*}{20} & \multirow{4}{*}{20} & \multirow{4}{*}{50}  & L & -26.4          & -26.4          & 0.0          & 122.5          & 80               \\
&&& B          & -26.4          & -26.4          & 0.0          & 125.2           & 125              \\
& &  & \textbf{R} & \textbf{-26.4} & \textbf{-26.4} & \textbf{0.0} & \textbf{73.2}  & \textbf{65}      \\
& & & SB         & -26.4          & -26.4          & 0.0          & 158.8          & 125              \\
  \hline
\multirow{4}{*}{30} & \multirow{4}{*}{30} & \multirow{4}{*}{50}  & L          & -35.6          & -35.6          & 0.0          & 325.1          & 124              \\
&&& B          & -35.6          & -35.6          & 0.0          & 416.5          & 122              \\
&&& \textbf{R} & \textbf{-35.6} & \textbf{-35.6} & \textbf{0.0} & \textbf{217.6}  & \textbf{37}      \\
&&& SB         & -35.6          & -35.6          & 0.0          & 433.3          & 122              \\
 \hline
\end{tabular}
\end{minipage}
\begin{minipage}{0.48\textwidth}
\flushleft
\begin{tabular}{lllllrrrr}
\hline
$|J|$& $|I|$& $N$               & cut & lb    & ub    & \begin{tabular}[c]{@{}c@{}}Gap \\ (\%)\end{tabular} & \begin{tabular}[c]{@{}c@{}}time \\ (s)\end{tabular} & node \\
 \hline
\multirow{4}{*}{20} & \multirow{4}{*}{20} & \multirow{4}{*}{100} & L& -26.2          & -26.2          & 0.0          & 221.3           & 76               \\
&&&B          & -26.2          & -26.2          & 0.0          & 243.8           & 75               \\
&&&\textbf{R} & \textbf{-26.2} & \textbf{-26.2} & \textbf{0.0} & \textbf{143.5}  & \textbf{48}      \\
&&& SB         & -26.2          & -26.2          & 0.0          & 260.2           & 75               \\
 \hline
\multirow{4}{*}{30} & \multirow{4}{*}{30} & \multirow{4}{*}{100} & L& -36.0          & -36.0          & 0.0          & 1280.7          & 116              \\
&&&B          & -36.0          & -36.0          & 0.0          & 839.9          & 93               \\
&&&\textbf{R} & \textbf{-36.0} & \textbf{-36.0} & \textbf{0.0} & \textbf{328.6} & \textbf{27}      \\
&&&SB         & -36.0          & -36.0          & 0.0          & 805.3          & 93               \\
 \hline
 \multirow{4}{*}{20} & \multirow{4}{*}{20} & \multirow{4}{*}{200} & L & -26.0          & -26.0          & 0.0          & 308.8          & 79               \\
&&&B          & -26.0          & -26.0          & 0.0          & 499.3          & 93               \\
&&&\textbf{R} & \textbf{-26.0} & \textbf{-26.0} & \textbf{0.0} & \textbf{239.3}  & \textbf{63}      \\
&&&SB         & -26.0          & -26.0          & 0.0          & 497.7          & 93               \\
  \hline
\multirow{4}{*}{30} & \multirow{4}{*}{30} & \multirow{4}{*}{200} & L & -36.1          & -36.1          & 0.0          & 1717.6         & 156              \\
 &&&B          & -36.1          & -36.1          & 0.0          & 2086.4        & 126              \\
&&&\textbf{R} & \textbf{-36.1} & \textbf{-36.1} & \textbf{0.0} & \textbf{591.1} & \textbf{24}      \\
&&& SB         & -36.1          & -36.1          & 0.0          & 2252.1         & 126              \\
  \hline
\end{tabular}
\end{minipage}
 \vspace{-10pt}
\end{table}

\noindent \textbf{Experiment 2.} To further evaluate the ReLU Lagrangian cuts, we consider the following stochastic multiple-resource-constrained scheduling problem (SMRCSP) \citep{keller2009scheduling}:
\begin{align*}
\min_{\bm x,\bm z} \ &\sum_{j \in J} \sum_{t\in [T-p_j+1]} c_{jt} x_{j t}+\sum_{k\in[K]} \sum_{t\in [T_0]} b_k z_{t k}+\frac{1}{N}\sum_{s\in [N]}  \Q_s(\bm x) \\
\text{s.t.} \ &\sum_{t\in[T-p_j+1]}x_{jt} = 1, \quad \forall j\in J,\\
&\sum_{j\in J}\sum_{\tau \in S(j,t)}r_{jk}x_{js} - M_{tk}z_{tk} \leq R_k, \quad \forall t\in[T_0], k\in [K],\\
&x_{j t} \in\{0,1\}, \quad \forall j\in J, t\in[T-p_j+1], \quad z_{tk}\in \{0,1\}, \quad \forall t\in[T_0], k\in[K],
\end{align*}
where for each scenario $s\in [N]$, the local recourse function is
\begin{align*}
    \Q_s(\bm x):= \min_{\bm y, \bm u} \ &\sum_{j\in J_B}\sum_{t=T_0+1}^{T+T_0-p_j+1}c_{jt}y_{jt} + \sum_{k\in[K]}\sum_{t=T_0+1}^{T+T_0}b_ku_{tk}\\
    \text{s.t.} \ &\sum_{t=T_0+1}^{T+T_0-p_j+1} y_{jt} = a_j^s, \quad  \forall j\in J_B,\\
    &\sum_{j\in J_B}\sum_{\tau\in S_B(j,t)}r_{jk}y_{j\tau} -M_{tk} u_{tk} \leq R_k -\sum_{j\in J}\sum_{\tau\in S(j,t)}r_{jk}x_{j\tau}, \quad  \forall t\in [T_0+1,T+T_0],k\in[K],\\
    &y_{jt}\in\{0,1\}, \quad \forall j\in J_B,t\in[T_0+1,T+T_0-p_j+1],\\
    &u_{tk}\in \{0,1\}, \quad \forall t\in[T_0+1, T+T_0],k\in[K],
\end{align*}
and $S(j,t)=[\max\{1,t-p_j+1\}, \min\{t,T-p_j+1\}]$, $S_B(j,t)=[\max\{T_0+1,t-p_j+1\}, \min\{t,T+T_0-p_j+1\}]$. In SMRCS, we impose a fixed cost for expansions when the required resources exceed the amount available in a given time period; i.e., $z_{tk}$ and $u_{tk}$ are binary variables. Compared with SSLP with continuous penalties, the integrality gap of SMRCSP is often large  due to big M coefficients.

The parameters are generated as follows. The time period $T_0$, at which we learn the accepted job bids, is set to $\lceil 0.25T_0\rceil$, the processing time $p_j$ of job $j$ is generated from a discrete uniform distribution $U[1,T]$, and the cost $c_{jt}$ of starting job $j$ in period $t$ is set to completion time $t+p_j-1$. For each period, the cost $b_{k}$ of temporary expansion follows a discrete uniform distribution $U[10, 20]$, the amount of resource from class $k$ consumed by job $j$ (i.e., $r_{jk}$) follows a discrete uniform distribution $U[1,5]$. The resource capacity $R_k = \frac{\widebar{p}\widebar{r}_k|J| + 0.75\widebar{p}_B\widebar{r}_{Bk}|J_B|}{(T+T_0)\rho}$, where $\widebar{p} = \frac{1}{|J|}\sum_{j\in J}p_j$, $\widebar{p}_B = \frac{1}{|J_B|}\sum_{j\in J_B}p_j$, $\widebar{r}_k = \frac{1}{|J|}\sum_{j\in J}r_{jk}$, $\widebar{r}_{Bk} = \frac{1}{|J_B|}\sum_{j\in J_B}r_{jk}$, and $\rho$ follows a continuous uniform distribution $\mathcal{U}[0.5, 1.2]$. The big Ms are set to $(|J|+|J_B|)T$. The indicator $a_j$ of whether the bid on job $j$ is accepted or not is stochastic and follows a Bernoulli distribution with a success rate 0.75. In our numerical study, we set the number of known jobs $|J|$ to 5 and 7, and the number of jobs available for bidding $|J_b|$ to 5 and 10. The number of time intervals $T$ is set to 10, and we consider $N = 10$ and $N = 100$ scenarios for the second stage. The results are shown in Table \ref{tbl:smrcsp}. The stopping criterion and the meanings of the other columns the same as those presented in Table \ref{tbl:sslp}.

\begin{table}[htbp]
 \vspace{-10pt}
\caption{Numerical results of the SMRCSP instances}\label{tbl:smrcsp}
\centering\setlength{\tabcolsep}{3.0pt}
\scriptsize
\begin{minipage}{0.48\textwidth}
\flushright
\begin{tabular}{llllllrrrr}
\hline
$|J|$& $|J_b|$                & $T$& $N$              & cut & lb    & ub    & \begin{tabular}[c]{@{}c@{}}Gap \\ (\%)\end{tabular} & \begin{tabular}[c]{@{}c@{}}time \\ (s)\end{tabular} &  node \\
\hline
\multirow{4}{*}{5} & \multirow{4}{*}{5}  & \multirow{4}{*}{10} & \multirow{4}{*}{10}  & L           & 243.1          & 243.1          & 0.0          & 5.7             & 1570             \\
 &   &   &    & B           & 243.1          & 243.1          & 0.0          & 7.0             & 1794             \\
 &   &   &    & \textbf{R}  & \textbf{243.1} & \textbf{243.1} & \textbf{0.0} & \textbf{3.7}    & \textbf{428}     \\
 &   &   &    & SB          & 243.1          & 243.1          & 0.0          & 8.4             & 422              \\
       
 \hline
\multirow{4}{*}{5} & \multirow{4}{*}{5}  & \multirow{4}{*}{10} & \multirow{4}{*}{100} & L           & 256.1          & 256.1          & 0.0          & 47.7            & 1574             \\
 &   &   &    & B           & 256.1          & 256.1          & 0.0          & 56.3            & 1717             \\
 &   &   &    & \textbf{R}  & \textbf{256.1} & \textbf{256.1} & \textbf{0.0} & \textbf{30.7}   & \textbf{392}     \\
 &   &   &    & SB          & 256.1          & 256.1          & 0.0          & 70.3            & 390              \\
  \hline
 \multirow{4}{*}{5} & \multirow{4}{*}{10} & \multirow{4}{*}{10} & \multirow{4}{*}{10}  & L           & 135.6          & 135.6          & 0.0          & 36.8            & 1215             \\
 &   &   &    & B           & 135.6          & 135.6          & 0.0          & 38.7            & 1349             \\
 &   &   &    & \textbf{R}  & \textbf{135.6} & \textbf{135.6} & \textbf{0.0} & \textbf{0.4}    & \textbf{1}       \\
 &   &   &    & SB          & 135.6          & 135.6          & 0.0          & 2.2             & 23               \\
 \hline
\multirow{4}{*}{5} & \multirow{4}{*}{10} & \multirow{4}{*}{10} & \multirow{4}{*}{100} & L           & 139.9          & 139.9          & 0.0          & 321.3           & 1248             \\
 &   &   &    & B           & 139.9          & 139.9          & 0.0          & 332.1           & 1337             \\
 &   &   &    & \textbf{R}  & \textbf{139.9} & \textbf{139.9} & \textbf{0.0} & \textbf{3.7}    & \textbf{1}       \\
 &   &   &    & SB          & 139.9          & 139.9          & 0.0          & 16.6            & 11               \\
 \hline
\end{tabular}
 \end{minipage}
 \begin{minipage}{0.5 \textwidth}
   \flushleft
     \begin{tabular}{{llllllrrrr}}
     \hline
$|J|$& $|J_b|$                & $T$& $N$              & cut & lb    & ub    & \begin{tabular}[c]{@{}c@{}}Gap \\ (\%)\end{tabular} & \begin{tabular}[c]{@{}c@{}}time \\ (s)\end{tabular} &  node \\
\hline
\multirow{4}{*}{7} & \multirow{4}{*}{5}  & \multirow{4}{*}{10} & \multirow{4}{*}{10}  & L           & 288.1          & 288.1          & 0.0          & 859.0           & 181689           \\
 &   &   &    & B           & 288.1          & 288.1          & 0.0          & 1064.5          & 186444           \\
 &   &   &    & \textbf{R}  & \textbf{288.1} & \textbf{288.1} & \textbf{0.0} & \textbf{542.1}  & \textbf{51487}   \\
 &   &   &    & SB          & 288.1          & 288.1          & 0.0          & 1281.0          & 45846            \\
 \hline
\multirow{4}{*}{7} & \multirow{4}{*}{5}  & \multirow{4}{*}{10} & \multirow{4}{*}{100} & L           & 130.2          & 285.0          & 118.9        & 3600.0         & 96382            \\
 &   &   &    & B           & 125.3          & 285.0          & 127.4        & 3600.0          & 78561            \\
 &   &   &    & \textbf{R}  & \textbf{285.0} & \textbf{285.0} & \textbf{0.0} & \textbf{3099.8} & \textbf{35840}   \\
 &   &   &    & SB          & 277.6          & 285.0          & 2.7          & 3600.0          & 11761            \\
 \hline
 \multirow{4}{*}{7} & \multirow{4}{*}{10} & \multirow{4}{*}{10} & \multirow{4}{*}{10}  & L           & 115.4          & 143.1          & 24.0         & 3600.0         & 30361            \\
 &   &   &    & B           & 117.2          & 143.1          & 22.1         & 3600.0          & 30260            \\
 &   &   &    & R           & 143.1          & 143.1          & 0.0          & 48.1            & 737              \\
 &   &   &    & \textbf{SB} & \textbf{143.1} & \textbf{143.1} & \textbf{0.0} & \textbf{40.5}   & \textbf{276}     \\
 \hline
\multirow{4}{*}{7} & \multirow{4}{*}{10} & \multirow{4}{*}{10} & \multirow{4}{*}{100} & L           & 112.3          & 157.2          & 40.0         & 3600.0         & 1936             \\
 &   &   &    & B           & 112.7          & 157.2          & 39.4         & 3600.0          & 2336             \\
 &   &   &    & \textbf{R}  & \textbf{157.2} & \textbf{157.2} & \textbf{0.0} & \textbf{240.6}  & \textbf{293}     \\
 &   &   &    & SB          & 157.2          & 157.2          & 0.0          & 644.1           & 224  \\
 \hline
\end{tabular}
\end{minipage}
 \vspace{-10pt}
\end{table}

In Table \ref{tbl:smrcsp}, we see that only ReLU Lagrangian cuts can solve all instances within the time limit. In general, they can close the optimality gap more quickly and require fewer branch-and-bound nodes to reach an optimal solution compared to integer L-shaped cuts and Benders cuts. Although strengthened Benders cuts outperform standard Benders cuts and require fewer nodes to explore than ReLU Lagrangian cuts in most instances, they still underperform the ReLU Lagrangian cuts. This may be due to the need to solve an MIP to obtain each strengthened Benders cut.

\noindent\textbf{Experiment 3.} Finally, we consider the following dynamic capacity acquisition and allocation problem (DCAP) adapted from \cite{ahmed2003dynamic} with mixed integer first-stage decisions: 
\begin{align*}
\min_{\bm x, \bm u}\left\{\sum_{t\in T} \sum_{i\in I} (\alpha_{it}x_{it} + \beta_{it}u_{it})+\frac{1}{N} \sum_{s\in [N]}  \Q_s(\bm x): x_{it}\leq b_{it}u_{it}, x_{it} \in [0,b_{it}], u_{it}\in\{0,1\}, \forall i\in I, t\in T\right\},
\end{align*}
where the local recourse functions
\begin{align*}
\Q_s(\bm x) := \min_{\bm y} &\sum_{t\in T} \sum_{i\in I}\sum_{j\in J} c_{ijt}^s y_{ijt} + \sum_{t\in T}\sum_{i\in I}p_{it}y_{it}^0\\
\text{s.t.} \ &\sum_{j\in J} d_{jt}^s y_{ijt}  - y^0_{it} \leq \sum_{\tau \in [t]} x_{i\tau}, \quad \forall i\in I, t\in T,\\
&\sum_{i\in I}y_{ijt} = 1, 
y_{it}^0 \geq 0, y_{ijt} \in \{0,1\}, \quad \forall i\in I, j\in J, t\in T.
\end{align*}

In this experiment, the parameters are generated as follows. In the first stage, the capacity expansion cost of acquiring resource $i$ in period $t$ consists of two parts, where the variable cost $\alpha_{it}$ and the fixed cost $\beta_{it}$ are drawn from discrete uniform distributions $U[20, 40]$ and  $U[50, 70]$, respectively. The maximum number of units of resource $i$ that can be required at time $t$, denoted as $b_{it}$, is set to 50. In the second stage, the cost $\tilde{c}_{ijt}$ of processing task $j$ using resource $i$ in period $t$, as well as the processing requirement $\tilde{d}_{jk}$ for task $j$ in period $t$, are stochastic. They follow discrete uniform distributions $U[40, 80]$ and $U[1, 10]$, respectively. To ensure relatively complete recourse, we add a penalty term for overflow, where the penalty rate $p_{it}$ is set to 1000.

We compare the performance of Augmented Lagrangian cuts (AL) given by \eqref{tight_alc} with $\bm\pi=\bm{0}$ and ReLU Lagrangian cuts (R) obtained by improving these augmented Lagrangian cuts through the cut-strengthening procedure described in \Cref{sec_general_mixed}. When solving the instances, we first add Benders cuts to solve the LP relaxation to optimality. Then, we maintain the integrality constraints in the first stage while taking the LP relaxation of the second stage and solve this relaxed problem to optimality using Benders cuts. At each iteration, we add a strengthened Benders cut with a nonlinear cut (AL or R) until the gap falls below $0.1\%$\footnotemark\footnotetext{Since this is an iterative solution procedure, we choose a slightly worse stopping criterion for the gap. If we choose the gap to be $0.01\%$, in most cases, the solution time will exceed an hour, but ReLU Lagrangian cuts still perform better than augmented Lagrangian cuts.} or the time limit of one hour is reached. 

The results are shown in Table \ref{tbl:dcap}. Columns 1, 2, and 3 represent the total number of resource types, tasks, and periods, respectively. Column 4 shows the number of scenarios. The iteration column lists the number of iterations used to add nonlinear cuts, excluding those for solving the relaxed problem with Benders cuts. As shown by the results, our cut-strengthening procedure leads to significant improvements over augmented Lagrangian cuts. For the instances in the first two rows, it significantly reduces the number of cuts required to solve the problem and shortens the solution time. For the larger instances, although both methods reach the time limit, ReLU Lagrangian cuts still achieve much smaller gaps.

\begin{table}[htbp]
 \vspace{-10pt}
\caption{Numerical results of the DCAP instances}\label{tbl:dcap}
\centering\setlength{\tabcolsep}{3.0pt}
\scriptsize
\begin{minipage}{0.48\textwidth}
\flushright
\begin{tabular}{lllllrrrrrr}
\hline
$|I|$             & $|J|$& $|T|$&  $N$                & cut & lb     & ub     & \begin{tabular}[c]{@{}c@{}}Gap \\ (\%)\end{tabular} & \begin{tabular}[c]{@{}c@{}}iter- \\ ation\end{tabular} & \begin{tabular}[c]{@{}c@{}}time \\ (s)\end{tabular} \\
\hline
\multirow{2}{*}{2} & \multirow{2}{*}{2} & \multirow{2}{*}{4} & \multirow{2}{*}{10}  & AL         & 1014.2          & 1015.2          & $<$0.1          & 998          & 3478.8          \\
 &&&  & \textbf{R} & \textbf{1014.9} & \textbf{1015.4} & \textbf{$<$0.1} & \textbf{41}  & \textbf{6.7}    \\
 \hline
\multirow{2}{*}{2} & \multirow{2}{*}{3} & \multirow{2}{*}{4} & \multirow{2}{*}{10}  & AL         & 2134.0          & 2280.7          & 6.9          & 521          & 3600.0          \\
 &&&  & \textbf{R} & \textbf{2257.5} & \textbf{2259.3} & \textbf{$<$0.1} & \textbf{89}  & \textbf{35.1}   \\
 \hline
\multirow{2}{*}{3} & \multirow{2}{*}{4} & \multirow{2}{*}{5} & \multirow{2}{*}{10}  & AL         & 2189.1          & 2366.1          & 8.1         & 324          & 3600.0          \\
 &&&  & \textbf{R} & \textbf{2218.6} & \textbf{2247.9} & \textbf{1.3} & \textbf{361} & \textbf{3600.0} \\
 \hline
\multirow{2}{*}{4} & \multirow{2}{*}{5} & \multirow{2}{*}{6} & \multirow{2}{*}{10}  & AL         & 2994.7          & 3192.0          & 6.6          & 509          & 3600.0          \\
 &&&  & \textbf{R} & \textbf{3009.7} & \textbf{3192.0} & \textbf{6.1} & \textbf{393} & \textbf{3600.0} \\
\hline
\end{tabular}
\end{minipage}
\begin{minipage}{0.48\textwidth}
\flushleft
\begin{tabular}{lllllrrrrrr}
\hline
$|I|$             & $|J|$& $|T|$&  $N$                & cut & lb     & ub     & \begin{tabular}[c]{@{}c@{}}Gap \\ (\%)\end{tabular} & \begin{tabular}[c]{@{}c@{}}iter- \\ ation\end{tabular} & \begin{tabular}[c]{@{}c@{}}time \\ (s)\end{tabular}\\
\hline
\multirow{2}{*}{2} & \multirow{2}{*}{2} & \multirow{2}{*}{4} & \multirow{2}{*}{100} & AL         & 1047.0          & 1048.0          & $<$0.1          & 629          & 1000.3          \\
 &&&  & \textbf{R} & \textbf{1047.4} & \textbf{1048.4} & \textbf{$<$0.1} & \textbf{22}  & \textbf{24.6}   \\
 \hline
\multirow{2}{*}{2} & \multirow{2}{*}{3} & \multirow{2}{*}{4} & \multirow{2}{*}{100} & AL         & 1976.1          & 2092.5          & 5.9          & 438          & 3600.0          \\
 &&&  & \textbf{R} & \textbf{2067.3} & \textbf{2069.3} & \textbf{$<$0.1} & \textbf{381} & \textbf{3289.8} \\
 \hline
\multirow{2}{*}{3} & \multirow{2}{*}{4} & \multirow{2}{*}{5} & \multirow{2}{*}{100} & AL         & 2307.6          & 2469.6          & 7.0          & 260          & 3600.0          \\
 &&&  & \textbf{R} & \textbf{2340.6} & \textbf{2388.6} & \textbf{2.1} & \textbf{291} & \textbf{3600.0} \\
 \hline
\multirow{2}{*}{4} & \multirow{2}{*}{5} & \multirow{2}{*}{6} & \multirow{2}{*}{100} & AL         & 3005.0          & 3170.8          & 5.5          & 307          & 3600.0          \\
 &&&  & \textbf{R} & \textbf{3020.3} & \textbf{3113.6} & \textbf{3.1} & \textbf{255} & \textbf{3600.0}\\
 \hline
\end{tabular}
\end{minipage}
 \vspace{-10pt}
\end{table}

Based on the numerical experiments above, we conclude the advantages of ReLU Lagrangian cuts in solving two-stage SMIPs as follows: (i) For binary first-stage decisions, they can replace the combination of Benders and integer L-shaped cuts. By improving the integer L-shaped cuts through LP-based strengthening problems, ReLU Lagrangian cuts inherit the strength of Benders cuts and the tightness of integer L-shaped cuts, making the master problem easier to solve compared to adding both Benders and integer L-shaped cuts simultaneously. (ii) Similar to strengthened Benders cuts, they provide a stronger outer approximation than the epigraph of the continuous relaxation of local recourse functions. The efficiency of the ReLU Lagrangian cuts is possibly due to incorporating valid inequalities of local convex hulls, including no-good and objective cuts, to the LP-based strengthening problems. (iii) For mixed integer first-stage decisions, they improve augmented Lagrangian cuts at a low computational cost and achieve better performance. And (iv) by implementing an appropriate strategy to prevent unboundedness in the strengthening problems, the resulting cuts accelerate the master problem and mitigate potential numerical issues arising from large coefficients in the integer L-shaped cuts or augmented Lagrangian cuts.

\subsection{A mutistage model}\label{subsec:numerical_mulstage}
The proposed ReLU Lagrangian cuts can be integrated into different algorithmic frameworks \citep{zou2019stochastic, ahmed2022stochastic, van2019level, yu2022multistage} to solve multistage SMIPs, which have been applied to many areas such as unit commitment \citep{zou2018unit}, facility location \citep{yu2021value},  disaster relief logistics planning \citep{castro2022markov}, and so on. In this subsection, we evaluate the performance of ReLU Lagrangian cuts within the stochastic dual dynamic integer programming (SDDiP) framework on a multistage airline revenue management problem \citep{zou2019stochastic}. 

In this subsection, we compare different combinations of non-tight and tight cuts. Non-tight cuts include Benders cuts (B), strengthened Benders cuts (SB), and improved Benders cuts (IB). An improved Benders cut is generated by applying our cut-strengthening procedure to a Benders cut. Tight cuts are selected from either integer L-shaped cuts (L) or ReLU Lagrangian cuts (R).

\noindent\textbf{Experiment 4.} Consider the following multistage airline revenue management problem (MARM):
\begin{align*}
\max \ &\sum_{t\in [T]}\left[(\bm f^b)^{\top} \bm b_t-(\bm f^c)^{\top} \bm c_t)\right] \\
\text { s.t. } \ & \bm B_t=\bm B_{t-1}+\bm b_t, \bm C_t=\bm C_{t-1}+\bm c_t, 
\bm C_t=\lfloor\bm \Gamma \bm B_t+0.5\rfloor, \bm b_t \leq \bm d_t,\bm A(\bm B_t-\bm C_t) \leq \bm R,\quad \forall t\in [T],\\
&  
\bm B_0=\widebar{\bm B}_0, \bm C_0=\widebar{\bm C}_0, 
\bm B_t\in \Ze_{+}^m, \bm C_t \in \Ze_{+}^m, \bm b_t\in \Ze_{+}^m, \bm c_t \in \Ze_{+}^m, \quad \forall t\in[T].
\end{align*}
The parameters are primarily based on the descriptions in \cite{zou2019stochastic} and \cite{moller2008airline} with slightly different origin-destination (OD) pairs and fair classes. Specifically, we suppose that there are 4 OD pairs, each having 2 classes, and each class offers 2 fare prices. Therefore, the total number of classes $n$ is 8, and the total number of ticket types $m$ is 12. The vector $\bm f^b$ representing the ticket prices is $(500, 340, 200, 100, 500, 340, 200, 100, 800, 540, 320, 160, 800, 540, 320, 160)^\top$, and the vector $\bm f^c$ representing the refund amount for cancellations is set as $0.8 \bm f^b$. The matrix $\bm\Gamma$ is a diagonal matrix, where the diagonal entries represent the cancellation rates. The cancellation rate for the first four types of tickets is 0.15, for the next eight types, it is 0.1, and for the last four types, it is 0.05. The matrix $\bm{A}$ is an $n\times m$ indicator matrix, where $A_{ij} = 1$ if ticket type $i$ belongs to class $j$, and 0 otherwise. The seating capacity for the first class is 24, and for the economy class, it is 120. Thus, the vector $\bm{R}$ representing seat capacities is set to $(24, 120, \ldots, 24 ,120)^\top$. Both the initial bookings vector  $\widebar{\bm{B}}_0$  and the initial cancellations vector  $\widebar{\bm{C}}_0$ are zero vectors.

The demand vector $\bm d_t$ at stage $t$ is stochastic. For each ticket type $i$, the total number of bookings follows a gamma distribution $G(p_i, g_i)$. These bookings are distributed over 60 days according to a beta distribution $B(\alpha_i, \beta_i)$, and the bookings for the 60 days are then assigned to the stages $T$. We repeat the sampling procedure 50 times, and for each stage $t\in [T]$, the demand vector $\bm d_t$ is drawn from these samples independently.  Note that we suppose that $\bm p = (3.0, 3.0, 70.0/3, 52.0, 2.0, 2.0, 35.0/3, 26.0, 2.0, 2.0, 17.5, 39.0, 3.0, 3.0, 35.0, 78.0)^\top$, $\bm g = (1.5, 1.5, 1.2, 1.0,  \ldots, 1.5, 1.5, 1.2, 1.0)^\top$, $\bm \alpha=(12.0, 8.0, 6.0, 2.0, \ldots,12.0, 8.0, 6.0, 2.0)^\top $, and $\bm \beta=(1.5, 2.0, 2.0, 4.0, \ldots, 1.5, 2.0, 2.0, 4.0)$.

\begin{table}[htbp]
 \vspace{-10pt}
\caption{Numerical results of the MARM instances.}\label{tbl:mulstage}
\centering\setlength{\tabcolsep}{3.0pt}
\footnotesize
\begin{minipage}{0.48\textwidth}
\flushright
\begin{tabular}{lllrrrr}
\hline
stage             & \begin{tabular}[c]{@{}c@{}}scen- \\ ario\end{tabular}             & cut  & (stat) lb & (best) ub & \begin{tabular}[c]{@{}c@{}}gap \\ (\%)\end{tabular} & \begin{tabular}[c]{@{}c@{}}iter- \\ ation\end{tabular}\\
\hline
\multirow{6}{*}{4}  & \multirow{6}{*}{3} & B+L           & 64925.9          & 66524.6          & 2.46          & 249    \\
&& B+R           & 64887.9          & 66407.6          & 2.34          & 298    \\
&& SB+L          & 65318.6          & 66530.5          & 1.86          & 174    \\
&& \textbf{SB+R} & \textbf{65319.8} & \textbf{66494.1} & \textbf{1.80} & \textbf{184}    \\
&& IB+L         & 65041.2          & 66591.0          & 2.38          & 197    \\
&& IB+ R         & 64942.7          & 66577.4          & 2.52          & 215    \\
\hline
\multirow{6}{*}{6}  & \multirow{6}{*}{3} & B+L           & 60744.3          & 62603.0          & 3.06          & 188    \\
&& B+R          & 60745.6          & 62564.9          & 3.00          & 225    \\
&& SB+L          & 60914.9          & 62670.7          & 2.88          & 127    \\
&& \textbf{SB+R} & \textbf{61083.7} & \textbf{62648.7} & \textbf{2.56} & \textbf{140}    \\
&& IB+L          & 60685.0          & 62629.7          & 3.20          & 161    \\
&& IB+R          & 60731.7          & 62652.4          & 3.16          & 173    \\
\hline
\multirow{6}{*}{8}  & \multirow{6}{*}{3} & B+L           & 63582.7          & 65131.5          & 2.44          & 155    \\
&& B+R           & 63593.9          & 65120.8          & 2.40          & 176    \\
&& SB+L          & 63949.0          & 65165.7          & 1.90          & 96     \\
&& \textbf{SB+R} & \textbf{63943.6} & \textbf{65156.2} & \textbf{1.90} & \textbf{108}    \\
&& IB+L          & 63667.2          & 65154.9          & 2.34          & 132    \\
&& IB+R          & 63859.5          & 65140.2          & 2.01          & 144    \\
\hline
\multirow{6}{*}{10} & \multirow{6}{*}{3} & B+L           & 63615.3          & 65037.2          & 2.24          & 136    \\
&& \textbf{B+R}           & \textbf{63607.9}          & \textbf{65027.4}          & \textbf{2.23}          & \textbf{149}    \\
&& SB+L & 63652.5 & 65070.0 & 2.23 & 84     \\
&& SB+R          & 63643.1          & 65065.9          & 2.24          & 92     \\
&& IB+L          & 63618.5          & 65052.0          & 2.25          & 117    \\
&& IB+R          & 63617.5          & 65043.1          & 2.24          & 130    \\
\hline
\end{tabular}
\end{minipage}
 \begin{minipage}{0.48\textwidth}
\flushleft
\begin{tabular}{lllrrrr}
\hline
stage             & \begin{tabular}[c]{@{}c@{}}scen- \\ ario\end{tabular}             & cut  & (stat) lb & (best) ub & \begin{tabular}[c]{@{}c@{}}gap \\ (\%)\end{tabular} & \begin{tabular}[c]{@{}c@{}}iter- \\ ation\end{tabular}\\
\hline
\multirow{6}{*}{4}  & \multirow{6}{*}{5} & B+L           & 65276.6          & 66737.9          & 2.24          & 211    \\
&& B+R           & 65172.6          & 66705.1          & 2.35          & 253    \\
&& SB+L          & 65278.0          & 66809.0          & 2.35          & 135    \\
&& \textbf{SB+R} & \textbf{65516.9} & \textbf{66798.3} & \textbf{1.96} & \textbf{144}    \\
&& IB+L          & 65125.8          & 66796.8          & 2.57          & 170    \\
&& IB+R          & 65193.0          & 66782.5          & 2.44          & 183    \\
\hline
\multirow{6}{*}{6}  & \multirow{6}{*}{5} & B+L           & 64148.0          & 65722.4          & 2.45          & 144    \\
&& B+R           & 64162.7          & 65721.6          & 2.43          & 157    \\
&& SB+L          & 64216.3          & 65770.9          & 2.42          & 88     \\
&& \textbf{SB+R} & \textbf{64224.7} & \textbf{65763.3} & \textbf{2.40} & \textbf{97}     \\
&& IB+L          & 64171.8          & 65753.3          & 2.46          & 120    \\
&& IB+R          & 64208.4          & 65746.0          & 2.39          & 130    \\
\hline
\multirow{6}{*}{8}  & \multirow{6}{*}{5} & B+L           & 66238.6          & 68181.7          & 2.93          & 125    \\
&& B+R           & 66224.2          & 68173.5          & 2.94          & 140    \\
&& SB+L          & 66165.9          & 68090.1          & 2.91          & 73     \\
&& \textbf{SB+R} & \textbf{66251.7} & \textbf{68086.7} & \textbf{2.77} & \textbf{82}     \\
&& IB+L          & 66182.9          & 68191.5          & 3.03          & 107    \\
&& IB+R          & 66199.0          & 68166.6          & 2.97          & 120    \\
\hline
\multirow{6}{*}{10} & \multirow{6}{*}{5} & \textbf{B+L}  & \textbf{66954.7} & \textbf{68544.1} & \textbf{2.37} & \textbf{111}    \\
&& B+R           & 66928.5          & 68518.2          & 2.38          & 124    \\
&& SB+L          & 66860.0          & 68576.4          & 2.57          & 64     \\
&& SB+R          & 66788.5          & 68555.4          & 2.65          & 69     \\
&& IB+L          & 66919.6          & 68531.1          & 2.41          & 94     \\
&& IB+R          & 66729.2          & 68527.9          & 2.70          & 101  \\
\hline
\end{tabular}
\end{minipage}
 \vspace{-10pt}
\end{table}

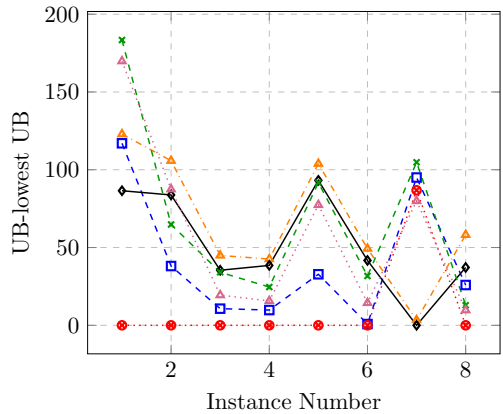
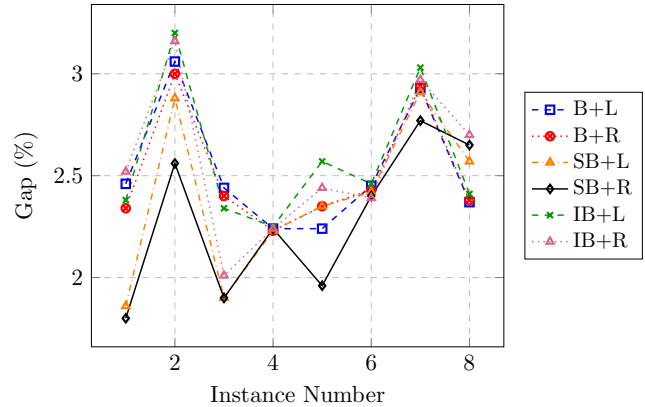
\begin{figure}[htbp]
\centering
\subfigure[The comparison of upper bounds]{\label{fig:marm_ub}
\begin{tikzpicture}[scale=0.8]
\begin{axis}[
    xlabel={Instance Number},
    ylabel={UB-lowest UB},
    legend style={at={(1.05,0.5)}, anchor=west, font=\small, legend cell align={left}},
    grid=both,
    ymajorgrids=true,
    xmajorgrids=true,
    grid style=dashed
]

\addplot[color=blue, mark=square, mark options={solid, line width=1pt}, dashed, line width=0.7pt] 
    coordinates {(1, 117) (2, 38.1) (3, 10.7) (4, 9.8) (5, 32.8) (6, 0.8) (7, 95) (8, 25.9)};

\addplot[color=red, mark=otimes, mark options={solid, line width=1pt}, dotted, line width=0.7pt] 
    coordinates {(1, 0) (2, 0) (3, 0) (4, 0) (5, 0) (6, 0) (7, 86.8) (8, 0)};

\addplot[color=orange, mark=triangle, mark options={solid, line width=1pt}, dash dot, line width=0.7pt] 
    coordinates {(1, 122.9) (2, 105.8) (3, 44.9) (4, 42.6) (5, 103.9) (6, 49.3) (7, 3.4) (8, 58.2)};

\addplot[color=black, mark=diamond, mark options={solid, line width=1pt}, solid, line width=0.7pt] 
    coordinates {(1, 86.5) (2, 83.8) (3, 35.4) (4, 38.5) (5, 93.2) (6, 41.7) (7, 0) (8, 37.2)};

\addplot[color=green!60!black, mark=x, mark options={solid, line width=1pt}, dashed, line width=0.7pt] 
    coordinates {(1, 183.4) (2, 64.8) (3, 34.1) (4, 24.6) (5, 91.7) (6, 31.7) (7, 104.8) (8, 12.9)};

\addplot[color=purple!60!white, mark=triangle, mark options={solid, line width=1pt}, dotted, line width=0.7pt] 
    coordinates {(1, 169.8) (2, 87.5) (3, 19.4) (4, 15.7) (5, 77.4) (6, 14.4) (7, 79.9) (8, 9.7)};

\end{axis}
\end{tikzpicture}}
\hfill
\subfigure[The comparison of gaps]{\label{fig:marm_gap}
\begin{tikzpicture}[scale=0.8]
\begin{axis}[
    xlabel={Instance Number},
    ylabel={Gap (\%)},
    legend style={at={(1.05,0.5)}, anchor=west, font=\small, legend cell align={left}},
    grid=both,
    ymajorgrids=true,
    xmajorgrids=true,
    grid style=dashed
]

\addplot[color=blue, mark=square, mark options={solid, line width=1pt}, dashed, line width=0.7pt] 
    coordinates {(1, 2.46) (2, 3.06) (3, 2.44) (4, 2.24) (5, 2.24) (6, 2.45) (7, 2.93) (8, 2.37)};
\addlegendentry{B+L}

\addplot[color=red, mark=otimes, mark options={solid, line width=1pt}, dotted, line width=0.7pt] 
    coordinates {(1, 2.34) (2, 3.00) (3, 2.40) (4, 2.23) (5, 2.35) (6, 2.43) (7, 2.94) (8, 2.38)};
\addlegendentry{B+R}

\addplot[color=orange, mark=triangle, mark options={solid, line width=1pt}, dash dot, line width=0.7pt] 
    coordinates {(1, 1.86) (2, 2.88) (3, 1.90) (4, 2.23) (5, 2.35) (6, 2.42) (7, 2.91) (8, 2.57)};
\addlegendentry{SB+L}

\addplot[color=black, mark=diamond, mark options={solid, line width=1pt}, solid, line width=0.7pt] 
    coordinates {(1, 1.80) (2, 2.56) (3, 1.90) (4, 2.24) (5, 1.96) (6, 2.40) (7, 2.77) (8, 2.65)};
\addlegendentry{SB+R}

\addplot[color=green!60!black, mark=x, mark options={solid, line width=1pt}, dashed, line width=0.7pt] 
    coordinates {(1, 2.38) (2, 3.20) (3, 2.34) (4, 2.25) (5, 2.57) (6, 2.46) (7, 3.03) (8, 2.41)};
\addlegendentry{IB+L}

\addplot[color=purple!60!white, mark=triangle, mark options={solid, line width=1pt}, dotted, line width=0.7pt] 
    coordinates {(1, 2.52) (2, 3.16) (3, 2.01) (4, 2.24) (5, 2.44) (6, 2.39) (7, 2.97) (8, 2.70)};
\addlegendentry{IB+R}

\end{axis}
\end{tikzpicture}}
\caption{\centering Comparisons of cut combinations for the MARM instances}\label{fig:arm}
\end{figure}

When implementing the SDDiP algorithm, during the forward step, we sample five paths and use the results of all five paths to generate cuts in the backward step. The statistical lower bounds are obtained by computing a 95\% confidence value using 1,500 sampled paths after the algorithm terminates. The results are shown in Table \ref{tbl:mulstage} and illustrated in Figure \ref{fig:arm}. Any cut combination that involves the ReLU Lagrangian cuts generally outperforms its counterpart with integer L-shaped cuts. As shown in the iteration column of Table \ref{tbl:mulstage}, the efficient cut-generating procedure for ReLU Lagrangian cuts enables more iterations to be completed within the same time limit. This may be because by using Strategy 1 to avoid unboundedness, many cut coefficients are improved to smaller absolute values or even zero, making the master problem easier to solve compared to the original L-shaped cuts. It is seen that B+R often achieves the lowest upper bounds in \Cref{fig:marm_ub} and B+R achieves the smallest gap for the majority of instances in \Cref{fig:marm_gap}. Although the improved Benders cuts benefit from the LP-based cut-strengthening procedure, their performance remains less efficient than that of the strengthened Benders cuts. Moreover, the advantage of ReLU Lagrangian cuts becomes less significant as the number of stages increases.

This numerical example shows that for purely binary state variables, ReLU Lagrangian cuts, which are equivalent to Lagrangian cuts, can be alternatives to integer L-shaped cuts, which is quite different from the existing literature \citep{zou2019stochastic}. While previous studies have focused on improving Lagrangian cuts to achieve certain properties by solving Lagrangian duals with integrality constraints, the resulting Lagrangian cuts are still not as effective as strengthened Benders cuts. As a result, it may be more efficient to use the combination of strengthened Benders and integer L-shaped cuts. This may be due to two reasons: (i) Obtaining a Lagrangian cut requires solving multiple MIPs, whereas strengthening a Benders cut only solves one; and (ii) Lagrangian cuts may compromise their strength to ensure the tightness. As this effect accumulates across stages, it results in slower recovery of the epigraphs of cost-to-go functions compared to strengthened Benders cuts. Nevertheless, the combination of strengthened Benders cuts and ReLU Lagrangian cuts often outperforms that of strengthened Benders and integer L-shaped cuts, especially in providing better upper bounds.

\section{Conclusion}\label{sec_con}

This paper introduced a new family of nonlinear cuts, termed ``ReLU Lagrangian cuts," for solving stochastic integer programs. These cuts improved traditional methods by addressing nonanticipativity constraints through ReLU functions, enabling both local and expected recourse epigraphs to be tightly and efficiently recovered. The tightness of these cuts was established through strong duality. We also proved that ReLU Lagrangian cuts are a generalization of existing cut families, including integer L-shaped cuts, ordinary Lagrangian cuts, reversed norm cuts, and augmented Lagrangian cuts. Therefore, existing cuts can serve as a foundation for initiating the ReLU Lagrangian cuts. We also proposed efficient cut generation schemes that enhance cut coefficients while eliminating the need to solve multiple mixed integer programs at each iteration. Our numerical studies demonstrated the superior performance of ReLU Lagrangian cuts, particularly in reducing the number of iterations required for the cutting-plane method to converge, compared to existing approaches.

\bibliographystyle{informs2014}
\bibliography{ref.bib}

\begin{thebibliography}{36}
\providecommand{\natexlab}[1]{#1}
\providecommand{\url}[1]{\texttt{#1}}
\providecommand{\urlprefix}{URL }

\bibitem[{Ahmed et~al.(2022)Ahmed, Cabral, \protect\BIBand{} Freitas Paulo~da
  Costa}]{ahmed2022stochastic}
Ahmed S, Cabral FG, Freitas Paulo~da Costa B (2022) Stochastic {L}ipschitz
  dynamic programming. \emph{Mathematical Programming} 191(2):755--793.

\bibitem[{Ahmed \protect\BIBand{} Garcia(2003)}]{ahmed2003dynamic}
Ahmed S, Garcia R (2003) Dynamic capacity acquisition and assignment under
  uncertainty. \emph{Annals of Operations Research} 124:267--283.

\bibitem[{Angulo et~al.(2016)Angulo, Ahmed, \protect\BIBand{}
  Dey}]{angulo2016improving}
Angulo G, Ahmed S, Dey SS (2016) Improving the integer {L}-shaped method.
  \emph{INFORMS Journal on Computing} 28(3):483--499.

\bibitem[{Bansal \protect\BIBand{}
  Küçükyavuz(2024)}]{bansal2024computational}
Bansal A, Küçükyavuz S (2024) A computational study of cutting-plane methods
  for multi-stage stochastic integer programs. \emph{arXiv preprint
  arXiv:2405.02533} .

\bibitem[{Benders(1962)}]{benders1962partitioning}
Benders J (1962) Partitioning procedures for solving mixed-variables
  programming problems. \emph{Numerische Mathematik} 4:238--252.

\bibitem[{Car{\o}e \protect\BIBand{} Schultz(1999)}]{caroe1999dual}
Car{\o}e CC, Schultz R (1999) Dual decomposition in stochastic integer
  programming. \emph{Operations Research Letters} 24(1-2):37--45.

\bibitem[{Castro et~al.(2022)Castro, Bodur, \protect\BIBand{}
  Song}]{castro2022markov}
Castro MP, Bodur M, Song Y (2022) Markov chain-based policies for multi-stage
  stochastic integer linear programming with an application to disaster relief
  logistics. \emph{arXiv preprint arXiv:2207.14779} .

\bibitem[{Chen \protect\BIBand{} Luedtke(2022)}]{chen2022generating}
Chen R, Luedtke J (2022) On generating {L}agrangian cuts for two-stage
  stochastic integer programs. \emph{INFORMS Journal on Computing}
  34(4):2332--2349.

\bibitem[{Feizollahi et~al.(2017)Feizollahi, Ahmed, \protect\BIBand{}
  Sun}]{feizollahi2017exact}
Feizollahi MJ, Ahmed S, Sun A (2017) Exact augmented {L}agrangian duality for
  mixed integer linear programming. \emph{Mathematical Programming}
  161:365--387.

\bibitem[{F{\"u}llner et~al.(2024{\natexlab{a}})F{\"u}llner, Sun,
  \protect\BIBand{} Rebennack}]{fullnernew}
F{\"u}llner C, Sun XA, Rebennack S (2024{\natexlab{a}}) A new framework to
  generate {L}agrangian cuts in multistage stochastic mixed-integer
  programming. Available at Optimization Online.

\bibitem[{F{\"u}llner et~al.(2024{\natexlab{b}})F{\"u}llner, Sun,
  \protect\BIBand{} Rebennack}]{fullner2024lipschitz}
F{\"u}llner C, Sun XA, Rebennack S (2024{\natexlab{b}}) On {L}ipschitz
  regularization and {L}agrangian cuts in multistage stochastic mixed-integer
  linear programming. Available at Optimization Online.

\bibitem[{Gade et~al.(2014)Gade, Küçükyavuz, \protect\BIBand{}
  Sen}]{gade2014decomposition}
Gade D, Küçükyavuz S, Sen S (2014) Decomposition algorithms with parametric
  {G}omory cuts for two-stage stochastic integer programs. \emph{Mathematical
  Programming} 144(1):39--64.

\bibitem[{Geoffrion(1974)}]{geoffrion1974lagrangean}
Geoffrion A (1974) Lagrangean relaxation for integer programming.
  \emph{Mathematical Programming Study} 82--114.

\bibitem[{Gupte et~al.(2013)Gupte, Ahmed, Cheon, \protect\BIBand{}
  Dey}]{gupte2013solving}
Gupte A, Ahmed S, Cheon MS, Dey S (2013) Solving mixed integer bilinear
  problems using {MILP} formulations. \emph{SIAM Journal on Optimization}
  23(2):721--744.

\bibitem[{Keller \protect\BIBand{} Bayraksan(2009)}]{keller2009scheduling}
Keller B, Bayraksan G (2009) Scheduling jobs sharing multiple resources under
  uncertainty: A stochastic programming approach. \emph{{IIE} Transactions}
  42(1):16--30.

\bibitem[{Laporte \protect\BIBand{} Louveaux(1993)}]{laporte1993integer}
Laporte G, Louveaux FV (1993) The integer {L}-shaped method for stochastic
  integer programs with complete recourse. \emph{Operations research letters}
  13(3):133--142.

\bibitem[{Laurent \protect\BIBand{}
  Sassano(1992)}]{laurent1992characterization}
Laurent M, Sassano A (1992) A characterization of knapsacks with the
  max-flow-—min-cut property. \emph{Operations Research Letters}
  11(2):105--110.

\bibitem[{Louveaux \protect\BIBand{} Schultz(2003)}]{louveaux2003stochastic}
Louveaux FV, Schultz R (2003) Stochastic integer programming. \emph{Handbooks
  in operations research and management science} 10:213--266.

\bibitem[{Meyer(1975)}]{meyer1975integer}
Meyer RR (1975) Integer and mixed-integer programming models: general
  properties. \emph{Journal of Optimization Theory and Applications}
  16:191--206.

\bibitem[{M{\"o}ller et~al.(2008)M{\"o}ller, R{\"o}misch, \protect\BIBand{}
  Weber}]{moller2008airline}
M{\"o}ller A, R{\"o}misch W, Weber K (2008) Airline network revenue management
  by multistage stochastic programming. \emph{Computational Management Science}
  5(4):355--377.

\bibitem[{Ntaimo(2013)}]{ntaimo2013fenchel}
Ntaimo L (2013) Fenchel decomposition for stochastic mixed-integer programming.
  \emph{Journal of Global Optimization} 55:141--163.

\bibitem[{Ntaimo \protect\BIBand{} Sen(2005)}]{ntaimo2005million}
Ntaimo L, Sen S (2005) The million-variable “march” for stochastic
  combinatorial optimization. \emph{Journal of Global Optimization}
  32:385--400.

\bibitem[{Rahmaniani et~al.(2020)Rahmaniani, Ahmed, Crainic, Gendreau,
  \protect\BIBand{} Rei}]{rahmaniani2020benders}
Rahmaniani R, Ahmed S, Crainic TG, Gendreau M, Rei W (2020) The {B}enders dual
  decomposition method. \emph{Operations Research} 68(3):878--895.

\bibitem[{Romeijnders \protect\BIBand{} van~der
  Laan(2024)}]{romeijnders2024benders}
Romeijnders W, van~der Laan N (2024) Benders decomposition with scaled cuts for
  multistage stochastic mixed-integer programs .

\bibitem[{Schrijver(1998)}]{schrijver1998theory}
Schrijver A (1998) \emph{Theory of linear and integer programming} (John Wiley
  \& Sons).

\bibitem[{Sen \protect\BIBand{} Higle(2005)}]{sen2005c}
Sen S, Higle JL (2005) The {C}\(^3\) theorem and a {D}\(^2\) algorithm for
  large scale stochastic mixed-integer programming: Set convexification.
  \emph{Mathematical Programming} 104:1--20.

\bibitem[{Sen \protect\BIBand{} Sherali(2006)}]{sen2006decomposition}
Sen S, Sherali HD (2006) Decomposition with branch-and-cut approaches for
  two-stage stochastic mixed-integer programming. \emph{Mathematical
  Programming} 106:203--223.

\bibitem[{Van~Ackooij et~al.(2019)Van~Ackooij, de~Oliveira, \protect\BIBand{}
  Song}]{van2019level}
Van~Ackooij W, de~Oliveira W, Song Y (2019) On level regularization with normal
  solutions in decomposition methods for multistage stochastic programming
  problems. \emph{Computational Optimization and Applications} 74:1--42.

\bibitem[{van~der Laan \protect\BIBand{} Romeijnders(2023)}]{van2023converging}
van~der Laan N, Romeijnders W (2023) A converging {B}enders’ decomposition
  algorithm for two-stage mixed-integer recourse models. \emph{Operations
  Research} .

\bibitem[{Van~Slyke \protect\BIBand{} Wets(1969)}]{van1969shaped}
Van~Slyke RM, Wets R (1969) L-shaped linear programs with applications to
  optimal control and stochastic programming. \emph{SIAM journal on applied
  mathematics} 17(4):638--663.

\bibitem[{Yu \protect\BIBand{} Shen(2021)}]{yu2021value}
Yu X, Shen S (2021) On the value of multistage risk-averse stochastic facility
  location with or without prioritization. \emph{arXiv preprint
  arXiv:2105.11005} .

\bibitem[{Yu \protect\BIBand{} Shen(2022)}]{yu2022multistage}
Yu X, Shen S (2022) Multistage distributionally robust mixed-integer
  programming with decision-dependent moment-based ambiguity sets.
  \emph{Mathematical Programming} 196(1):1025--1064.

\bibitem[{Zhang \protect\BIBand{} Kucukyavuz(2014)}]{zhang2014finitely}
Zhang M, Kucukyavuz S (2014) Finitely convergent decomposition algorithms for
  two-stage stochastic pure integer programs. \emph{SIAM Journal on
  Optimization} 24(4):1933--1951.

\bibitem[{Zhang \protect\BIBand{} Sun(2022)}]{zhang2022stochastic}
Zhang S, Sun XA (2022) Stochastic dual dynamic programming for multistage
  stochastic mixed-integer nonlinear optimization. \emph{Mathematical
  Programming} 196(1):935--985.

\bibitem[{Zou et~al.(2018)Zou, Ahmed, \protect\BIBand{} Sun}]{zou2018unit}
Zou J, Ahmed S, Sun XA (2018) Multistage stochastic unit commitment using
  stochastic dual dynamic integer programming. \emph{IEEE transactions on Power
  Systems} 34(3):1814--1823.

\bibitem[{Zou et~al.(2019)Zou, Ahmed, \protect\BIBand{}
  Sun}]{zou2019stochastic}
Zou J, Ahmed S, Sun XA (2019) Stochastic dual dynamic integer programming.
  \emph{Mathematical Programming} 175:461--502.

\end{thebibliography}

\newpage

\titleformat{\section}{\large\bfseries}{\appendixname~\thesection .}{0.5em}{}
\begin{appendices}
\section{Proofs}\label{proofs}
\subsection{Proof of \Cref{coro:tight_lag}} \label{pf:coro_tight_lag}
\tight*
\begin{proof}
It is equivalent to show that any cut coefficient $\bm\pi\in\dxhat$ satisfies $\mathcal{L}_s(\bm\pi;\hatx) = \Q_s(\hatx)$.
Given $\bm\pi\in\dxhat$,
   from the primal characterization we have $\mathcal{L}_s(\bm\pi;\hatx)=\min_{\bm{x}} \{\theta : (\bm{x}, \theta) \in \conv(\epi_{\barx}(\Q_s)), \bm{x} = \hat{\bm{x}} \}\geq \min_x\{\theta:\theta\geq \bm\alpha^\top(\hatx-\bm x)+\Q_s(\hatx), \bm x=\hatx  \}=\Q_s(\hatx)$. Combined with \eqref{primal0} completes the proof.
   \qed
\end{proof}

\subsection{Proof of \Cref{thm:gen_lag_dual}}
\label{pf:thm_dual}
\dual*
\begin{proof}
By weak duality, we have $\underline{\Q}_s(\hatx) \leq \Q_s(\hatx)$.  
To show that $\underline{\Q}_s(\hatx) \geq \Q_s(\hatx)$, we note
\begin{align*}
\underline{\Q}_s(\hatx) = &\sup_{\bm\pi^+,\bm\pi^-\in\Re^n}{\mathcal{L}_s(\bm\pi^+,\bm\pi^-;\hatx)} 
\geq \sup_{\rho>0}{\mathcal{L}_s(-\rho\bm{1},-\rho\bm{1};\hatx)},
\end{align*}
where 
\begin{align}
\mathcal{L}_s(-\rho\bm{1},-\rho\bm{1};\hatx)= &\inf_{\bm x}\{\Q_s(\bm x)+\rho||\bm x-\hatx||_1:\bm x\in \barx\}
= \inf_{\bm x, \bm y}\{(\bm q^s)^\top \bm y + \rho||\bm x-\hatx||_1: (\bm x,\bm y)\in \widebar{Y}_s\}\notag\\
=&\inf_{\bm x, \bm y, \omega}\{(\bm q^s)^\top \bm y + \rho \omega: (\bm x,\bm y)\in \widebar{Y}_s, ||\bm x-\hatx||_1\leq \omega\},\label{rev_norm_fun}
\end{align}
and $\widebar{Y}_s:=\{(\bm x,\bm y):\Ze^{n_1}\times\Re^{n_2}\times\Ze^{m_1}\times\Re^{m_2}: \bm A\bm x\geq \bm b, \bm T^s \bm x+ \bm W^s\bm y \geq \bm h^s\}$.  
Since formulation \eqref{rev_norm_fun} is an MIP with rational data and 
$\inf_{\bm x, \bm y, \omega}\{(\bm q^s)^\top \bm y + \rho \omega: (\bm x,\bm y)\in \widebar{Y}_s, ||\bm x-\hatx||_1\leq \omega\} \geq \inf_{\bm x, \bm y}\{(\bm q^s)^\top \bm y: (\bm x,\bm y)\in \widebar{Y}_s\}\geq L$ by Assumption \ref{assumption:lb}, the infimum of formulation \eqref{rev_norm_fun} is attained at some $(\bm x_\rho, \bm y_\rho, \omega_\rho)$. From the weak duality, we have
\begin{equation}\label{weak_dual}
\mathcal{L}_s(-\rho\bm{1},-\rho\bm{1};\hatx)=
(\bm q^s)^\top \bm y_\rho+\rho\omega_\rho=
\Q_s(\bm x_\rho)+\rho\omega_\rho
\leq \Q_s(\hatx),
\end{equation}
for all $\rho>0$ and any optimal solution $(\bm x_\rho, \bm y_\rho, \omega_\rho)$ of problem \eqref{rev_norm_fun}. 
This implies that $\omega_\rho\leq \frac{\Q_s(\hatx)-\Q_s(\bm x_\rho)}{\rho}\leq \frac{\Q_s(\hatx)-L}{\rho}$. Letting $\rho\rightarrow\infty$, we have $\omega_\rho\rightarrow 0$. Hence, we must have $\lim_{\rho\rightarrow \infty} \bm x_\rho = \hatx$, since $||\bm x_\rho -\hatx||_1 \leq \omega_\rho$ for any $\rho>0$. Notice that $\Q_s(\bm x)$ is the value function of a rational MIP and is, therefore, lower semicontinuous with respect to $\bm x$ \citep{meyer1975integer}. Thus,  $\liminf_{\rho\rightarrow\infty}\Q_s(\bm x_\rho) \geq \Q_s(\hatx)$. For any $\rho>0$, we have $\mathcal{L}_s(-\rho\bm{1},-\rho\bm{1};\hatx)=\Q_s(\bm x_\rho)+\rho\omega_\rho \geq \Q_s(\bm x_\rho)$. Taking the $\liminf$ on both sides, we obtain $\liminf_{\rho\rightarrow\infty}\mathcal{L}_s(-\rho\bm{1},-\rho\bm{1};\hatx)\geq \Q_s(\hatx)$. Meanwhile, taking the $\limsup$ on both sides of \eqref{weak_dual} and using the fact that $\lim_{\rho\rightarrow\infty}\omega_\rho = \hatx$, we have $\limsup_{\rho\rightarrow\infty}\mathcal{L}_s(-\rho\bm{1},-\rho\bm{1};\hatx)\leq \Q_s(\hatx)$. Thus, $\lim_{\rho\rightarrow\infty}\mathcal{L}_s(-\rho\bm{1},-\rho\bm{1};\hatx)=\Q_s(\hatx)$. Together with the weak duality, this shows that $\underline{\Q}_s(\hatx)=\Q_s(\hatx)$.

Next, we prove the supremum can be attained at some finite $\rho^*$. 
Define a set
\begin{equation}\label{set_F}
\widebar{F} := \{(\bm x,\bm y, \omega):(\bm x,\bm y)\in \widebar{Y}_s, ||\bm x-\hatx||_1\leq \omega\}.
\end{equation}
When $\rho > 0$, we have
\begin{align*}
\mathcal{L}_s(-\rho\bm{1},-\rho\bm{1};\hatx) =& \inf_{\bm x, \bm y, \omega}\left\{(\bm q^s)^\top \bm y +\rho\omega: (\bm x,\bm y, \omega)\in \widebar{F}\right\}
= \inf_{\bm x, \bm y, \omega}\left\{(\bm q^s)^\top \bm y +\rho\omega: (\bm x,\bm y, \omega)\in \conv{(\widebar{F})}\right\}\\
=&\min_{k\in K}\{(\bm q^s)^\top \bm y^k +\rho\omega_k\}
\geq  \min_{k\in K}\{\Q_s(\bm x^k) +\rho\omega_k\},
\end{align*}
where $\{(\bm x^k, \bm y^k, \omega_k)\}_{k\in K} \subseteq \widebar{F}$ are all extreme points of $\conv{(\widebar{F})}$. The third equality holds since $\rho>0$ ensures that $\mathcal{L}_s(-\rho\bm{1},-\rho\bm{1};\hatx)\geq L$, and the infimum is attained at some extreme point of $\conv{(\widebar{F})}$. The inequality follows because, for any feasible solution $(\bm x, \bm y)\in \widebar{Y}_s$, we have $(\bm q^s)^\top \bm y \geq \Q_s(\bm x)$ by the definition of the local recourse function. If $\bm x^k=\hatx$ for all $k\in K$, then we have $\mathcal{L}_s(-\rho\bm{1},-\rho\bm{1};\hatx)\geq \Q_s(\hatx)$ for any positive $\rho^*$. If not, let $d=\min\{||\bm x^k-\hatx||_1: k\in K, \bm x^k\neq \hatx\}$ and let $\rho^* = \frac{\Q_s(\hatx)-L}{d}$. Then, for any optimal $\omega_{\rho^*}$, we have $\omega_{\rho^*} \leq \frac{\Q_s(\hatx)-L}{\rho^*}\leq d$, according to \eqref{weak_dual}. Therefore, we have
\begin{align*}
\mathcal{L}_s(-\rho^*\bm{1},-\rho^*\bm{1};\hatx)=&\min_{k\in K}\left\{(\bm q^s)^\top \bm y^k +\rho^*\omega_k: \omega_k\leq d\right\}
=\min_{k\in K}\{(\bm q^s)^\top \bm y^k +\rho^*\omega_k: \omega_k \in \{0,d\}\}
\geq \Q_s(\hatx),
\end{align*}
where the first equality follows since restricting $\omega_k\leq d$ preserves all optimal solutions, and the last inequality follows since when $\omega_k=0$, we have $(\bm q^s)^\top \bm y^k +\rho^*\omega_k=\Q_s(\hatx)$, and when $\omega_k = d$, $(\bm q^s)^\top \bm y^k +\rho^*\omega_k \geq L + \frac{\Q_s(\hatx)-L}{d}d = \Q_s(\hatx)$.
Therefore, the optimal value of \eqref{gen_lag_dual_ori} is $\Q_s(\hatx)$ with an optimal solution ${\bm\pi^+}^*={\bm\pi^-}^*=-\rho^*\bm{1}$.
\qed
\end{proof}

\subsection{Proof of \Cref{coro:expand_feas}}
\label{pf:coro_exp_feas}
\expfeas*
\begin{proof}
Since $({\bm\pi^+}^*, {\bm\pi^-}^*)$ is optimal to $\sup_{\bm\pi^+,\bm\pi^-\in\Re^n}\inf_{\bm x\in S}L_s(\bm x, \bm\pi^+,\bm\pi^-;\hatx)$, we have $\inf_{\bm x\in S}L_s(\bm x, \bm\pi^+,\bm\pi^-;\hatx)=\Q_s(\hatx)$. Taking the infimum over a subset $\barx \subseteq S$, we have $\inf_{\bm x\in \barx}L_s(\bm x, \bm\pi^+,\bm\pi^-;\hatx) \geq \inf_{\bm x\in S}L_s(\bm x, \bm\pi^+,\bm\pi^-;\hatx)=\Q_s(\hatx)$. At the same time, we have $L_s(\hatx, \bm\pi^+,\bm\pi^-;\hatx)=\Q_s(\hatx)$ and $\hatx\in\barx$. Thus, we can conclude that $\inf_{\bm x\in \barx}L_s(\bm x, \bm\pi^+,\bm\pi^-;\hatx)=\Q_s(\hatx)$. This implies that $({\bm\pi^+}^*, {\bm\pi^-}^*)$ is also optimal when restricting $\bm x\in\barx$. 
\qed
\end{proof}

\subsection{Proof of \Cref{coro:gen_lag_opt_cond}}
\label{pf:opt_cond}
\optcond*
\begin{proof}
Given $\hatx\in\barx$ and $(\hat{\bm\pi}^+,\hat{\bm\pi}^-)\in\Re^{2n}$, if the condition \eqref{gen_lag_opt_cond} is satisfied, we have $\mathcal{L}_s(\hat{\bm\pi}^+,\hat{\bm\pi}^-;\hatx)= \inf_{\bm x}\left\{\Q_s(\bm x)-\sum_{i\in[n]}\hat{\pi}^+_i(x_i-\hat{x}_i)^+ - \sum_{i\in[n]}\hat{\pi}^-_i(x_i-\hat{x}_i)^-:\bm x\in \barx\right\}\geq \Q_s(\hatx)$. Meanwhile, if $(\hat{\bm\pi}^+,\hat{\bm\pi}^-)$ is optimal to \eqref{gen_lag_dual_ori}, we have $\mathcal{L}_s(\hat{\bm\pi}^+,\hat{\bm\pi}^-;\hatx)=\Q_s(\hatx)$ according to the strong duality shown in \Cref{thm:gen_lag_dual}. Therefore,  we have \begin{align*}
\Q_s(\hatx) &= \mathcal{L}_s(\hat{\bm\pi}^+,\hat{\bm\pi}^-;\hatx) =\mathcal{L}_s(\bm\pi^+,\bm\pi^-;\hatx)= \inf_{\bm x}\left\{\Q_s(\bm x)-\sum_{i\in[n]}\hat{\pi}^+_i(x_i-\hat{x}_i)^+ - \sum_{i\in[n]}\hat{\pi}^-_i(x_i-\hat{x}_i)^-:\bm x\in \barx\right\} \\
&\leq \Q_s(\bm x) - \sum_{i\in[n]}\hat{\pi}^+_i(x_i-\hat{x}_i)^+ - \sum_{i\in[n]}\hat{\pi}^-_i(x_i-\hat{x}_i)^-
\end{align*}
for all feasible $\bm x\in\barx$.
\qed
\end{proof}

\subsection{Proof of \Cref{prop:conv_gen_lshaped}}
\label{pf:lp_conv}
\lpconv*
\begin{proof}
Let us denote the right-hand side of \eqref{s3} by $S_2$, which is a continuous relaxation of $S_1$. Hence, we must have $S_1\subseteq S_2$.

To prove $S_1\supseteq S_2$, we first observe that $\conv(S_1)= \conv\{(\bm x,\theta): 
\exists (\bm\omega^+, \bm\omega^-,\bm z)\in\Re^n\times\Re^n\times\{0,1\}^n, 
\eqref{mip_rep_cut},\eqref{mip_rep_omega},0\leq x_i\leq B_i,\forall i\in[n]\}$.
For any $(\bm x,\theta)\in S_2$, there exists a solution $(\bm\omega^+,\bm\omega^-,\bm z)$ such that \eqref{s3} holds. Without loss of generality, we assume
$z_0:= 1 \geq z_1\geq \ldots\geq z_n$. To show $(\bm x,\theta)\in\conv(S_1)$, we construct $n+1$ points $(\bm x^k, \theta_k)\}_{k\in [0,n]}$ as follows. For $k\in [0,n]$, let $\lambda_k = z_k-z_{k+1}$, and define $\bm z^k\in\{0,1\}^n$ as 
$$z_i^k = \left\{\begin{aligned}
&1, &i\leq k,\\
&0, &i> k,
\end{aligned}\right.$$
for all $i\in[n]$.
Define $(\bm\omega^+)^k,(\bm\omega^-)^k\in\Re^n$ as follows:
If $z_k = 1$, let 
$$(\omega^+)^k_i = \left\{
\begin{aligned}
&\omega^+_i, &i\leq k,\\
&0, &i> k,
\end{aligned}
\right. \text{ and } (\bm\omega^-)^k=\bm{0}.$$
If $0< z_k <1$, let 
$$(\omega^+)^k_i = \left\{
\begin{aligned}
&\frac{\omega^+_i}{z_i}, &i\leq k,\\
&0, &i> k,
\end{aligned}
\right. \text{ and } 
(\omega^-)^k_i = \left\{
\begin{aligned}
&0, &i\leq k,\\
&\frac{\omega^-_i}{1-z_i}, &i> k.
\end{aligned}
\right.
$$
If $z_k = 0$, let 
$$(\bm\omega^+)^k = \bm{0} \text{ and } (\omega^-)^k_i = \left\{
\begin{aligned}
&0, &i\leq k,\\
&\omega^-_i, &i> k.
\end{aligned}
\right.$$
With $\bm x^k$ defined as $\bm x^k = \hatx + (\bm\omega^+)^k - (\bm\omega^-)^k$, the above formulations ensure that constraints \eqref{mip_rep_omega} are satisfied. To satisfy condition \eqref{mip_rep_cut}, we let $\theta^k = \theta - \sum_{i\in[n]}(\pi_i^+\omega^+_i+\pi_i^-\omega^-_i)+\sum_{i\in[n]}(\pi_i^+(\omega^+)^k_i+\pi_i^-(\omega^-)^k_i)$. Thus, we have $\theta^k - \sum_{i\in[n]}(\pi_i^+(\omega^+)^k_i+\pi_i^-(\omega^-)^k_i) = \theta - \sum_{i\in[n]}(\pi_i^+\omega^+_i+\pi_i^-\omega^-_i) \geq \Q_s(\hatx)$. It is easy to verify that $(\bm x,\theta, \bm\omega^+,\bm\omega^-,\bm z) = \sum_{k\in [0,n]}\lambda_k (\bm x^k, \theta_k, (\bm\omega^+)^k,(\bm\omega^-)^k, \bm z^k)$. Thus, $(\bm x, \theta)\in \conv(S_1)$. Consequently, we have $\conv(S_1)=S_2$.\qed
\end{proof}

\subsection{Proof of \Cref{prop:lag_relu}}
\label{pf:lag}
\lag*
\begin{proof}
Given $\hatx\in\barx$ and $\bm\pi\in\Re^n$ such that $\mathcal{L}(\bm\pi;\hatx)=\Q_s(\hatx)$, we can derive a tight Lagrangian cut $\theta\geq\Q_s(\hatx)+\bm\pi^\top(\bm x-\hatx)$. It is equivalent to $\theta \geq \Q_s(\hatx) + \sum_{i\in[n]} \pi_i(x_i-\hat{x}_i)^+ - \sum_{i\in[n]} \pi_i(x_i-\hat{x}_i)^-$, which is a ReLU Lagrangian cut according to \Cref{coro:gen_lag_opt_cond}.
\qed
\end{proof}

\subsection{Proof of \Cref{prop:dual_poly}}
\label{pf:poly}
\poly*
\begin{proof}
According to \Cref{def_pihat}, from the optimality condition \eqref{gen_lag_opt_cond}, we have
\begin{align*}
\pixhat = 
&\left\{(\bm\pi^+, \bm\pi^-)\in\Re^{2n}: \min_{\bm x}\left\{\Q_s(\bm x)-\sum_{i\in[n]}\pi^+_i(x_i-\hat{x}_i)^+-\sum_{i\in[n]}\pi_i^-(x_i-\hat{x}_i)^-: \bm x\in\barx\right\}\geq \Q_s(\hatx)\right\}\\
=&\left\{(\bm\pi^+, \bm\pi^-)\in\Re^{2n}: \min_{\bm x,\bm\omega^+,\bm\omega^-,\bm z}\left\{\Q_s(\bm x)-(\bm\pi^+)^\top \bm\omega^+-(\bm\pi^-)^\top \bm\omega^-: \bm x\in\barx, \eqref{mip_rep_omega},\eqref{mip_rep_z}\right\}\geq \Q_s(\hatx)\right\}\\
=&\left\{(\bm\pi^+, \bm\pi^-)\in\Re^{2n}: \min_{\bm x,\bm\omega^+,\bm\omega^-,\bm z,\bm y}\left\{(\bm q^s)^\top \bm y-(\bm\pi^+)^\top \bm\omega^+-(\bm\pi^-)^\top \bm\omega^-: (\bm x, \bm y)\in\widebar{Y}_s, \eqref{mip_rep_omega},\eqref{mip_rep_z}\right\}\geq \Q_s(\hatx)\right\}\\
=&\left\{(\bm\pi^+, \bm\pi^-)\in\Re^{2n}: ({\bm\omega^+}^k)^\top \bm\pi^+ + ({\bm\omega^-}^k)^\top \bm\pi^- \leq (\bm q^s)^\top \bm y^k-\Q_s(\hatx),\forall k\in K \right\},
\end{align*}
where $\{(\bm x^k,{\bm\omega^+}^k,{\bm\omega^-}^k,\bm z^k,\bm y^k)\}_{k\in K}$ are all extreme points of the polyhedron $\conv\{(\bm x,\bm\omega^+,\bm\omega^-,\bm z,\bm y): (\bm x, \bm y)\in\widebar{Y}_s, \eqref{mip_rep_omega}, \eqref{mip_rep_z}\}$. This implies that $\pixhat$ is defined by finitely many linear inequalities and, therefore, is a polyhedron.
\qed
\end{proof}

\subsection{Proof of \Cref{lemma:boundary}}
\label{pf:linsp}
\linsp*
\begin{proof}
Given $\hatx\in X$, for any binary $\bm x$, we have $\omega^+_i = (x_i-\hat{x}_i)^+=0$ for all $i\in \ix$ and $\omega^-_i = (x_i-\hat{x}_i)^-=0$ for all $i\notin \ix$. Thus, $({\bm\omega^+}^k)^\top \bm e_i + ({\bm\omega^-}^k)^\top \bm{0} = 0$ for all $i\in \ix$ and $({\bm\omega^+}^k)^\top \bm{0} + ({\bm\omega^-}^k)^\top \bm e_i = 0$ for all $i\notin \ix$.
\qed
\end{proof}

\subsection{Proof of \Cref{polar}}
\label{pf:polar}
\polar*
\begin{proof}
For a given $\hatx\in\barx$,
from the optimality condition \eqref{opt_cond_general}, we have
\begin{align*}
\dxhat 
=\left\{\bm \pi\in\Re^n: \min_{\bm  x,\theta}\left\{\theta+\bm\pi^\top (\hatx - \bm x):(\bm x,\theta)\in \conv(\epi_{\barx}(\Q_s))\right\}\geq \co(\Q_s)(\hatx) \right\},
\end{align*}
where $\conv({\epi_{\barx}(\Q_s)}) = \conv\{(\bm x^k, \Q(\bm x^k)): k\in K\}+\cone\{(\bm{0},1)\}$, and $K$ is finite since $\conv({\epi_{\barx}(\Q_s)})$ is a polyhedron.
We further obtain that
\begin{equation}\label{pixhat}
\dxhat= \left\{\bm\pi\in\Re^n:\Q_s(\bm x^k)+\bm \pi^\top(\hatx-\bm x^k) \geq \co(\Q_s)(\hatx), \ \forall k\in K, \ 1-\bm \pi^\top \bm{0} \geq 0 \right\}.
\end{equation}
which involves finitely many linear inequalities and, therefore, is a polyhedron.

Suppose that set $\conv{(\barx)}$ is full-dimensional. Then $\conv{(\epi_{\barx}(\Q_s))}$ has dimension $n+1$.
A cut $\theta\geq \co(\Q_s)(\hatx)+\bm\pi^\top (\bm x-\hatx)$ is facet-defining if and only if it is tight at $n+1$ extreme points of $\conv{(\epi_{\barx}(\Q_s))}$ that are affinely independent. That is, $\bm \pi$ satisfies 
 $n$ linearly independent inequalities with equality in \eqref{pixhat},  meaning that it is an extreme point of $\dxhat$. 
 When $\barx$ is not full dimensional, we can reduce it to a lower dimensional space where the same result holds.

From the optimality condition \eqref{opt_cond_general}, we have
\begin{align*}
\dxhat 
=&\left\{\bm \pi\in\Re^n: \bm\pi^\top (\hatx-\bm x)\geq \mathrm{co}(\Q_s)(\hatx)-\Q_s(\bm x), \forall \bm x\in \barx \right\}.
\end{align*}
Thus, $\reccone(\dxhat) = \left\{\bm \pi\in\Re^n: \bm\pi^\top (\hatx-\bm x)\geq 0, \forall \bm x\in \barx \right\} = \left\{\bm \pi\in\Re^n: \bm\pi^\top (\hatx-\bm x)\geq 0, \forall \bm x\in \conv(\barx) \right\}=N_{\conv(\barx)}(\hatx)$.
\qed
\end{proof}

\subsection{Proof of \Cref{prop:-a}}
\label{pf:-a}
\revtag*
\begin{proof}
Note that LP \eqref{stren} is always feasible with $\bm{\eta}=\bm{0}$ being a feasible solution. 
Hence, the strengthening problem \eqref{stren} is bounded if and only if its dual problem
\begin{align*}
\max \left\{\sum_{k\in K}(\Q_s(\hatx)-\Q_s(\bm x^k)-\hat{\bm \pi}^\top(\hatx-\bm x^k))y_k:
\sum_{k\in K} (\hatx -\bm x^k)y_k = \bm a,\bm y \geq \bm{0}\right\},
\end{align*}
where $\{\bm x^k\}_{k\in K}=X$, is feasible. Therefore, by definition, the strengthening problem \eqref{stren} is bounded if and only if $-\bm a\in \mathcal{T}_{\conv(X)}(\hatx)$. 
\qed
\end{proof}

\subsection{Proof of \Cref{prop:feas_eta}}
\label{pf:shrink}
\shrink*
\begin{proof}
    Since $\Q^{LP}_s(\bm x)$ is obtained by solving the LP relaxation of the local recourse problem, we have $\Q^{LP}_s (\bm x) \leq \Q_s (\bm x)$ for all $\bm x\in X$. Thus, for any $\bm\eta$ feasible for \eqref{stren_lp}, we have $\min_{\bm x}\left\{ \Q_s(\bm x)+(\hat{\bm\pi}+\bm\eta)^\top(\hatx-\bm x):\bm x\in X\right\}\geq \min_{\bm x}\left\{ \Q^{LP}_s(\bm x)+(\hat{\bm\pi}+\bm\eta)^\top(\hatx-\bm x):\bm x\in X\right\} \geq \min_{\bm x}\left\{ \Q^{LP}_s(\bm x)+(\hat{\bm\pi}+\bm\eta)^\top(\hatx-\bm x):\bm x\in X^{LP}\right\}\geq \Q_s(\hatx)$. Therefore, $\bm\eta$ is also feasible for \eqref{stren}.
    \qed
\end{proof}

\subsection{Proof of \Cref{prop:nogood}}
\label{pf:nogood}
\nogood*
\begin{proof}
According to \eqref{F_eta_lp}, we have
\begin{align*}
F_{s}^{LP}=&\left\{\bm\eta:\Q_s^{LP}(\bm x)+(\hat{\bm\pi}+\bm\eta)^\top(\hatx-\bm x)\geq \Q_s(\hatx), \forall \bm x\in\widetilde{X}\right\}\\
=&\left\{
\bm\eta:
\begin{pmatrix}
\bm x^\top & \theta
\end{pmatrix}
\begin{pmatrix}
 -(\hat{\bm \pi}+\bm{\eta}) \\ 1
\end{pmatrix}
\geq 
\begin{pmatrix}
\hatx^\top & \Q_s(\hatx)
\end{pmatrix}
\begin{pmatrix}
-(\hat{\bm\pi}+\bm\eta) \\
1
\end{pmatrix},\forall (\bm x,\theta)\in \epi_{\widetilde{X}}\Q_s^{LP}
\right\}.
\end{align*}
Note that
$$\epi_{\widetilde{X}}(\Q_s^{LP})=\left\{(\bm x, \theta)\in X^{LP}\times \Re : \exists \bm y\in \Re^{m_1+m_2}:
\theta \geq (\bm q^s)^\top \y,  \bm T^s\bm x+\bm W^s\bm y \geq \bm h^s,
\bm\chi^\top(\hatx-\bm x)\geq 1
\right\}
$$
is a polyhedron and, therefore, closed and convex.
Since $(\hatx^\top, \Q_s(\hatx))\notin \epi_{\widetilde{X}}\Q_s^{LP}$,
by the separation theorem, there exists $\bm\alpha,\beta,\gamma$ such that 
$\bm\alpha^\top\hatx+\beta\Q_s(\hatx)<\gamma$ and
$\bm\alpha^\top\bm x+\beta\theta\geq \gamma$ for all $(\bm x,\theta)\in \epi_{\widetilde{X}}(\Q_s^{LP})$. If $\beta\neq 0$, since $(\bm{0},1)$ is an extreme ray of $\epi_{\widetilde{X}}(\Q_s^{LP})$, we have $\beta>0$.
Then, $-\bm\alpha/\beta-\hat{\bm\pi}\in F_{s}^{LP}$.
If $\beta=0$, let 
$$\widebar{\beta} = \frac{\gamma-\bm\alpha^\top\hatx}{2\max\{|L|,\Q_s(\hatx),1\}}>0.$$ Then we have a separation $\bm\alpha^\top \bm x+\widebar{\beta}\theta \geq \bm\alpha^\top\bm x+\widebar{\beta}L\geq \gamma - \frac{\gamma-\bm\alpha^\top\hatx}{2} = \frac{\gamma+\bm\alpha^\top\hatx}{2}$ and 
$\bm\alpha^\top\hatx+\widebar{\beta}\Q_s(\hatx)\leq \bm\alpha^\top\hatx+\frac{\gamma-\bm\alpha^\top\hatx}{2}=\frac{\gamma+\bm\alpha^\top\hatx}{2}$. Hence, $-\bm\alpha/\widebar{\beta}-\hat{\bm\pi}\in F_{s}^{LP}$.
\qed
\end{proof}

\subsection{Proof of \Cref{prop:lam_l}}
\label{pf:lam_l}
\lam*
\begin{proof}
According to \Cref{prop:conv_gen_lshaped}, we have
\begin{align*}
\conv(E_\Lambda^s)=\left\{(\bm x,\theta)\in \times_{i\in[n]}[0, B_i]\times\Re:
\begin{aligned}
&\exists (\bm\omega^+, \bm\omega^-,\bm z)\in\Re^n\times\Re^n\times[0,1]^n,\\
&\theta \geq \Q_s(\hatx) + (L-\Q_s(\hatx))\sum_{i\in[n]}(\omega_i^+ + \omega_i^-),\label{mip_rep_cut}\\
&\omega_i^+ - \omega_i^- = x_i-\hat{x}_i,0\leq \omega_i^+\leq (B_i-\hat{x}_i)z_i,\\
&0\leq \omega_i^-\leq \hat{x}_i(1-z_i),\forall i\in[n] 
\end{aligned}\right\}.
\end{align*}
it suffices to show that $\conv(E_\Lambda^s)\supseteq E_L^s$. We observe that $(\bm x,\Q_s(\hatx)+(L-\Q_s(\hatx))\sum_{i\in[n]}\max\{B_i-\hat{x}_i, \hat{x}_i \}) \in \conv(E_\Lambda^s)$ for all $\bm x\in \mathcal{B}$ by letting $z_i=1$ if $B_i-\hat{x}_i>\hat{x}_i$ and 0, otherwise, 
for all $i\in[n]$. It remains to show that 
\begin{claim}\label{claim_delta}
$||\bm \delta^i-\hat{\bm\delta}^i|| \leq \max\{B_i-\hat{x}_i, \hat{x_i}\}$ for all $\bm x\in \mathcal{B}$ and $i\in[n]$. 
\end{claim}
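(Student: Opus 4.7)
The plan is to reduce the inequality to a simple counting bound that works almost everywhere, leaving only a single small case for direct verification.

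I would begin by splitting the Hamming distance into $||\bm\delta^i - \hat{\bm\delta}^i||_1 = |A^i| + |C^i|$, where $A^i := \{j\in[0,N_i]: \delta^i_j=1,\hat\delta^i_j=0\}$ and $C^i := \{j\in[0,N_i]: \delta^i_j=0,\hat\delta^i_j=1\}$ are disjoint subsets of $\{0,1,\dots,N_i\}$. This immediately yields the trivial bound
$$
||\bm\delta^i - \hat{\bm\delta}^i||_1 \le N_i+1 = 1 + \lfloor \log_2 B_i \rfloor.
$$

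Next, since $\hat{x}_i$ is an integer in $[0,B_i]$, at least one of $\hat{x}_i$ or $B_i-\hat{x}_i$ is at least $\lceil B_i/2 \rceil$, giving $\max\{B_i-\hat{x}_i,\hat{x}_i\} \ge \lceil B_i/2\rceil$. So whenever $1+\lfloor \log_2 B_i \rfloor \le \lceil B_i/2 \rceil$, the trivial bound already suffices. A short arithmetic check shows this inequality holds for every $B_i \ge 3$ except $B_i = 4$.

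For the exceptional value $B_i = 4$ (where $N_i = 2$), the values $\hat{x}_i \in \{0,1,3,4\}$ still yield $\max\{B_i-\hat{x}_i,\hat{x}_i\} \ge 3 = N_i+1$, so those are covered by the trivial bound. Only $\hat{x}_i = 2$ remains, in which case $\hat{\bm\delta}^i = (0,1,0)^\top$. Here I would directly enumerate the five valid $\bm\delta^i \in \{0,1\}^3$ satisfying $\sum_j 2^j \delta^i_j \le 4$ and observe that each has Hamming distance at most $2 = \max\{B_i-2,2\}$.

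The main obstacle---minor as it is---lies in verifying the comparison $1+\lfloor \log_2 B_i\rfloor \le \lceil B_i/2 \rceil$ over the range $B_i \ge 3$ and confirming that $B_i = 4$ is the unique exception; the hypothesis $B_i \ge 3$ in \Cref{prop:lam_l} is precisely what rules out the other problematic values $B_i \in \{1,2\}$. Once the single edge case $(B_i,\hat{x}_i) = (4,2)$ is checked by finite enumeration, the claim is established.
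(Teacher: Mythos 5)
Your proposal is correct and follows essentially the same route as the paper: bound the Hamming distance by $N_i+1$, compare against $\lceil B_i/2\rceil \le \max\{B_i-\hat{x}_i,\hat{x}_i\}$ to dispose of all $B_i\ge 3$ except $B_i=4$, and then settle the single remaining case $(B_i,\hat{x}_i)=(4,2)$ by a finite check (the paper notes that distance $3$ would force $\bm\delta^i=(1,0,1)^\top$, i.e.\ $x_i=5>B_i$, which is the same observation as your enumeration). No gaps.
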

According to Claim \ref{claim_delta}, for any $(\bm x,\theta)\in E_L^s$, we have $\theta \geq \Q_s(\hatx) + (L-\Q_s(\hatx))\sum_{i\in[n]}||\bm\delta^i-\hat{\bm\delta}^i||_1 \geq \Q_s(\hatx)+(L-\Q_s(\hatx))\sum_{i\in[n]}\max\{B_i-\hat{x}_i, \hat{x}_i \}$. Hence, $(\bm x,\theta)\in \conv(E_\Lambda^s)$ and  $E_L^s \subseteq \conv(E_\Lambda^s)$.

To prove Claim \ref{claim_delta}, notice that since $B_i$ is an integer and $N_i=\lfloor \log_2B_i\rfloor$, we have $\max\{B_i-\hat{x}_i, \hat{x}_i \} \geq \lceil B_i/2\rceil \geq 2^{N_i-1}$. On the other hand, we observe that $||\bm\delta^i-\hat{\bm\delta}^i||_1 \leq N_i+1$. There are three cases:
\begin{enumerate}[{Case }1:]
\item   $B_i \geq 5$ or $B_i=3$. In this case, we have $\lceil B_i/2\rceil \geq N_i+1$. 
\item   $B_i = 4$, if $\hat{x}_i = 0,1,3,4$, we have $||\bm\delta^i-\hat{\bm\delta}^i||_1 \leq 3\leq\max\{B_i-\hat{x}_i, \hat{x}_i \}$. 
\item  $B_i = 4$ and $\hat{x}_i = 2$. In this case, we have $\max\{B_i-\hat{x}_i, \hat{x}_i \}=2$, $\hat{\bm\delta}^i=(0,1,0)^\top$, and $||\bm\delta^i-\hat{\bm\delta}^i||_1\leq 3$. Note that $||\bm\delta^i-\hat{\bm\delta}^i||_1=3$ if and only if $\bm\delta^i=(1,0,1)^\top$, i.e., $x_i=5$, contradicting $x_i\leq B_i=4$. Hence, we have $||\bm\delta^i-\hat{\bm\delta}^i||_1 \leq 2$.
\end{enumerate}
Therefore, we must have $\max\{B_i-\hat{x}_i, \hat{x}_i \}\geq ||\bm\delta^i-\hat{\bm\delta}^i||_1$ if $B_i \geq 3$ for any $i\in [n]$.
\qed
\end{proof}
\subsection{Proof of \Cref{prop:conv_box}}
\label{pf:conv_box}
\convbox*
\begin{proof}
Let $D_i=\left\{\bm\delta^i\in\{0,1\}^{N_i+1}:\sum_{j\in [0,N_i]}2^j\delta^i_j\leq B_i\right\}$ for each $i\in [n]$. Then, according to the decomposition structure of set $E_L^s$, we have
\begin{align*}
\conv(E_L^s) = \left\{
(\bm x,\theta)\in \times_{i\in[n]}[0, B_i]\times\Re:
\begin{aligned}
&\exists\bm\delta \in [0,1]^{N_1+1}\times\cdots\times[0,1]^{N_n+1},\eqref{bi_lshaped},\\
&x_i=\sum_{j\in [0,N_i]}2^j \delta^i_j,  \forall i\in[n],\\
&\bm\delta^i \in \conv(D_i) ,\forall i\in[n] .
\end{aligned}\right\},
\end{align*}

On the other hand, according to \cite{laurent1992characterization, gupte2013solving}, the convex hulls of the $D_i$'s can be described using their knapsack structure.
Suppose that $B_i = 2^{j_{1i}}+\cdots+2^{j_{\ell i}}+2^{N_i}$, and let $J_i = \{j_{1i},\ldots,j_{\ell i},N_i\}$ for each $i\in[n]$. Then, we have
\begin{equation*}
\conv(D_i) = \left\{\bm\delta^i\in[0,1]^{N_i+1}:\delta^i_r+\sum_{\tau\in J_{ir}}\delta^i_{\tau}\leq |J_{ir}|,\forall r\in\{0,1,\ldots, N_i\}\backslash J_i\right\},
\end{equation*}
where $J_{ir}=\{\ell\in J_i:\ell > r\}$.  
This completes the proof.
\qed
\end{proof}
\end{appendices}

\end{document}